\newtheorem{theorem}{Theorem}[section]
\newtheorem{lemma}[theorem]{Lemma}
\newtheorem{corollary}[theorem]{Corollary}
\newtheorem{definition}[theorem]{Definition}
\newtheorem{remark}[theorem]{Remark}
\newtheorem{proposition}[theorem]{Proposition}
\numberwithin{equation}{section}
\newenvironment{proof}[1][Proof]{\noindent\textbf{#1.} }{\hfill $\Box$}
\makeatletter\setlength{\textwidth}{15.0cm} \setlength{\oddsidemargin}{1.0cm} \setlength{\evensidemargin}{1.0cm}
\begin{document}

\title{\textbf{Front-like entire solutions for monostable reaction-diffusion systems }} 
\author{Shi-Liang Wu$^{a,}$\thanks{ Supported by  the Scientific Research Program Funded by Shaanxi
Provincial Education Department (No. 12JK0860)}
,~Haiyan Wang$^{b}$ \\
$^{a}$Department of Applied Mathematics, Xidian University,\\
 Xi'an, Shaanxi 710071, P.R.China\\
$^{b}$School of Mathematical and Natural Sciences\\
Arizona State University\\
Phoenix, AZ 85069, USA
  }

\date{}

\maketitle

\begin{abstract}

This paper is concerned with front-like entire solutions for  monostable reaction-diffusion systems with cooperative and non-cooperative nonlinearities.
In the cooperative case, the existence and asymptotic behavior of spatially independent solutions (SIS) are first proved.
Combining a SIS and traveling fronts with different wave speeds and directions,
the existence and various qualitative properties of entire solutions are then established using comparison principle. In the non-cooperative case, we introduce two auxiliary cooperative systems and establish some comparison arguments for the three systems.  The existence of entire solutions is then proved
via the traveling fronts and SIS of the auxiliary systems.
Our results are applied to some biological
and epidemiological models. To the best of our knowledge, it is the first work to study the  entire solutions of non-cooperative reaction-diffusion systems.

\textbf{Keywords}: Entire solution, Traveling wave solution, Cooperative system, Non-cooperative system, Monostable nonlinearity

\textbf{AMS Subjective Classifications (2000): }35K57; 35B40; 34K30; 92D25; 92D30
\end{abstract}

\newpage

\section{Introduction}

\noindent

This paper is concerned with entire solutions of the following $m$-dimensional reaction-diffusion system in $\mathbb{R}^N$:
\begin{equation}\label{eq1.1}
u_t=D\Delta u+f(u),{\ \ }x\in\mathbb{R}^N,t\in\mathbb{R},
\end{equation}
where $m,N\in\mathbb{N}$,
$$u=(u_1,\cdots,u_m),{\ }f=(f_1,\cdots,f_m),{\ }D={\rm diag}(d_1,\cdots,d_m),$$
and $(d_1,\cdots,d_m)\gg{\bf0}:=(0,\cdots,0)\in\mathbb{R}^m$.
Here and in what follows, we always use the usual notations for the standard
ordering in $\mathbb{R}^m$.
As usual, system \eqref{eq1.1} is said to be cooperative on $I\subseteq \mathbb{R}^m$ if each $f_i(u)$
is non-decreasing in $u_j$  on $I$  for $1\leq j\neq i\leq m$; otherwise, it is said to be non-cooperative  on $I$.


One important topic for reaction-diffusion systems is the traveling wave solution that describes the phenomenon of wave propagation.
 In the past decades, many studies have led to almost complete
description of traveling wave solutions of \eqref{eq1.1} with cooperative nonlinearity  \cite{Volpert,Tsai,Li,Weinberger1,Weinberger2}.
For example, Volpert et al. \cite{Volpert}
gave a complete result about the monostable and bistable traveling fronts, and 
Tsai \cite{Tsai} investigated the global exponential stability of the bistable traveling fronts.
In a series of papers, Weinberger, Lewis and Li \cite{Li,Weinberger1,Weinberger2} studied
spreading speeds and traveling fronts for general cooperative recursion
systems.  Related results on scalar non-monotone evolution equations, we refer to \cite{Fangz2,Hsuzhao,Lilw,Ma,Wang1,Wang,wuliu,wuliu1,wuliu2}.

In addition to traveling wave solutions, another important topic in
diffusion systems is the interactions of them,
which is crucially related to the pattern formation problem. We refer to \cite{Ei1,Ei2,Kawahara,Moritan1} for more details.
Mathematically, this phenomenon can be described by the so-called front-like {\it entire solution} that is defined for all space and time and behaves like a combination of traveling fronts as $t\rightarrow-\infty$.
 On the other hand, from the dynamical points of view,  the study
of entire solutions is essential for a full understanding of the transient dynamics and the structures
of the global attractor \cite{Moritan}.
In the recent years, there were many works devoted to
the interactions of traveling fronts and entire solutions for scalar reaction-diffusion (both spatially continuous
and discrete) equations with and without delays, see e.g.,
\cite{Cheng,Chengn,Crooks,Fukao,Guom,Hameln1,Hameln2,Liwangwu,Liliuwang,Moritan,Yagisita,Wangliruan,Wang7,Wangli}.

More recently,
Morita and Tachibana \cite{Moritat}, Guo and Wu \cite{Guowu}, and Wang and Lv \cite{Wangl} and Wu \cite{wu} extended the existence of entire solutions for scalar
equations to some specific {\it two component cooperative} reaction-diffusion model systems. The basic idea in these studies, similar to \cite{Cheng,Guom,Liwangwu,Wangliruan},
is to use traveling fronts propagating from both directions of the $x$-axis to build sub- and supersolutions, and then prove the existence results by employing comparison principle.
Unfortunately, it seems
difficult, if not impossible, to construct such supersolutions
for the {\it $m$-component} reaction-diffusion system \eqref{eq1.1}. In fact, to the best of our knowledge, there has been no results on the entire solutions for general cooperative reaction-diffusion systems and non-cooperative systems.

The purpose of the paper is to consider the entire solutions of system \eqref{eq1.1} with cooperative or non-cooperative nonlinearity.
In the cooperative case, the existence and asymptotic behavior of spatially independent solutions are first proved. Since it is  difficult to use traveling fronts to
construct supersolutions for the general {\it $m$-component} system, we extend the method developed in \cite{Hameln1}
for scalar KPP equations to system \eqref{eq1.1}.
More precisely,  we construct appropriate upper estimates by  virtue of the exact asymptotic behavior of the traveling fronts and  spatially independent solution, and then prove the existence and  qualitative features of entire solutions using comparison principle (Theorems \ref{thm2.9} and \ref{thm2.10}).
Although the method is inspired by the work of Hamel and Nadirashvili \cite{Hameln1}, the technical details are different. In \cite{Hameln1},
the upper estimates were proved by the solution formulation of the linearization of the scalar KPP equation at the trivial equilibrium.
Contrasting to \cite{Hameln1}, we use a general comparison principle to prove
the upper estimates (Lemma \ref{lem2.11}). Recently, the method was successfully applied
in our previous work \cite{wuweng}  to a multi-type SIS nonlocal epidemic model.

For the non-cooperative reaction-diffusion systems, we introduce two
auxiliary cooperative systems, one lies above and another below of system \eqref{eq1.1}, which were used  by Wang \cite{Wang} and several references therein to obtain
the existence of traveling wave solutions, and establish some comparison arguments
for the three systems. Combining the traveling fronts and spatially independent solution of the lower system and their exact asymptotic behavior, we then build
appropriate subsolutions and upper estimates of the auxiliary lower and upper systems using the comparison theorem, respectively,
and prove the existence and qualitative properties of entire solutions of \eqref{eq1.1} with non-cooperative nonlinearity (Theorem \ref{thm3.5}).  To the best of our knowledge, it is the first work to study the  entire solutions of non-cooperative reaction-diffusion systems.


In biology and epidemiology, there are quite a few reaction-diffusion model systems of the form \eqref{eq1.1} with cooperative or non-cooperative nonlinearities.
We shall illustrate our main results by discussing the following models in \cite{Tsai1,Tsai2,cm,Weinberger,Wang}.

{\bf A. A Buffered System.} In \cite{Tsai1,Tsai2},
Tsai and Sneyd presented a buffered system:
\begin{equation}
    \left\{
       \begin{array}{ll}
       \partial_tu_1 =d \Delta u_1
              +g(u_1)+\sum_{i=1}^m[k^-_i(b^0_i-v_i)-k^+_iu_1v_i] ,\\
         \partial_t v_i =d_i\Delta v_i+
                k^-_i(b^0_i-v_i)-k^+_iu_1v_i,\ i=1,\cdots,n,
        \end{array}
    \right.  \label{eq1.3}
\end{equation}
where $d,k^\pm_i,b^0_i>0$ and $d_i\geq0 $ are given  parameters.  They studied the existence, uniqueness and stability of traveling fronts of \eqref{eq1.3} by taking
the typical bistable
nonlinearity for the function $g$, i.e. $g(u_1)=u_1(u_1-a)(1-u_1)$ for some $a\in(0,1).$ Note that \eqref{eq1.3} can be transformed to a cooperative system on
 $\mathbb{R}^+\times\prod_{i=1}^n[0,b^0_i]$  under the change of variable $u_i=b^0_i-v_i$, $i=1,\cdots,n$. Other results related to the buffered system, we refer to \cite{Fangz,Gt,Ka} and the references therein.

{\bf B. An Epidemic Model.}
To study  the fecally-orally transmitted diseases in the European Mediterranean regions,
 Capasso and Maddalena  \cite{cm} introduced the epidemic model:
\begin{equation}
    \left\{
       \begin{array}{ll}
       \partial_tu_1 =d_1 \Delta u_1
               -a_{11}u_1 +a_{12}u_2 ,\\
        \partial_tu_2 =d_2\Delta u_2
                -a_{22}u_2 +g(u_1 ),
        \end{array}
    \right.  \label{eq1.5}
\end{equation}
where 
 $d_1,a_{11},a_{12},a_{22}>0$  and $d_2\geq0$ are given  parameters. The function $g(u_1)$ decribes the infection rate of human under the
assumption that total susceptible human population is constant. In general, $g(\cdot)$ is increasing on $[0,+\infty)$. But, if the ``psychological'' effect is considered (see, e.g., Xiao and
Ruan \cite{xiaor}), then $g(\cdot)$ is a unimodal curve on $[0,+\infty)$, that is, $g(\cdot)$ achieves its
maximum at some $ u_{\max}>0$, and is increasing on $[0,u_{\max}]$ and decreasing on $[u_{\max},+\infty)$.
When $d_2=0$ and $g$ is monotone,  Xu and Zhao \cite{xz} proved the existence, uniqueness and stability of bistable
traveling fronts of (\ref{eq1.5})  and Zhao and Wang \cite{Zhaowang} established the existence and non-existence of monostable traveling fronts. These results were then extended by Wu and Liu \cite{wuliu} to the non-monotone case by constructing two auxiliary monotone integral
equations.

{\bf C. A Population Model.}
Weinberger, Kawasaki and Shigesada \cite{Weinberger} discussed the reaction-diffusion model which describes the
interaction between ungulates with linear density $u_1$ and grass with linear
density  $u_2$:
\begin{equation}\label{eq1.4}
\left\{
\begin{array}{l}
\partial_tu_1 =d_1 \Delta u_1 +  u_1[-\alpha-\delta u_1+r_1u_2],\\
\partial_tu_2 =d_2 \Delta u_2  +r_2u_2[1-u_2 +h(u_1)],
\end{array}\right.
\end{equation}
where $d_1,d_2,r_1,r_2,\alpha,\delta$ are all positive parameters. The function $h(u_1)$ models the increase in the specific growth
rate of the grass due to the presence of ungulates. When the density $u_1$ is small
the net effect of ungulates is increasingly beneficial, but as the density increases
above a certain value, the benefits decrease with increasing. 
In \cite{Weinberger},
Weinberger, Kawasaki and Shigesada  established the spreading speeds for \eqref{eq1.4} by employing comparison
methods. Taking the
non-monotone Ricker function $u_1e^{-u_1}$ as $h(u_1)$, Wang \cite{Wang} further characterized the spreading speed as the
slowest speed of traveling wave solutions.

Throughout this paper, we always make the following assumptions:
 \begin{description}
\item[$\rm(A_0)$] There exists ${\bf K}\gg {\bf 0}$ such that $f({\bf0})=f({\bf K})={\bf0}$, $f\in C^2([{\bf 0},{\bf K}], \mathbb{R}^m)$
 and there is no other positive equilibrium of $f$ between ${\bf 0}$ and ${\bf K}$.
\item[$\rm(A_1)$] One of the following holds:
 \begin{description}
\item[$\rm(a)$] The matrix $f'({\bf0})$ is cooperative and irreducible with $s(f'({\bf0}))>0$, where $$s(f'({\bf0})):=\max\{\Re \lambda: \det(\lambda I-f'({\bf0}))=0\};$$
\item[$\rm(b)$] For each $\lambda\geq0$, $A(\lambda):=D\lambda^2+f'({\bf0})$ is in block lower triangular form,
the first diagonal block has a positive principal eigenvalue 	$M(\lambda)$,
and $M(\lambda)$ is strictly larger than the principal eigenvalues of all other
diagonal blocks. In addition, there is a positive eigenvector $v(\lambda)=(v_1(\lambda),\cdots,v_m(\lambda))\gg0$
of $A(\lambda)$ corresponding to $M(\lambda)$ and $v(\lambda)$ is continuous
with respect to $\lambda$.
\end{description}
\end{description}

We mention that a square matrix is called to be cooperative if all off-diagonal entries are non-negative, and
irreducible if it cannot be placed into block lower-triangular form by simultaneous row/column
permutations (Smith \cite{Smith}).

If $\rm(A_1)(b)$ holds, by the argument of  \cite[Lemma 1.1]{Wang}, there exist two numbers $c_*>0$ and $\lambda_*>0$ such that
\begin{equation}\label{eq1.2}
c_*=\frac{M(\lambda_*)}{\lambda_*}=\inf_{\lambda>0}\frac{M(\lambda)}{\lambda},
\end{equation}
and for any $c>c_*$, there exists  $\lambda_1:=\lambda_1(c)\in(0,\lambda_*)$ such that $M(\lambda_1) =c \lambda_1$ and $M(\lambda)<c \lambda$ for any $\lambda\in(\lambda_1,\lambda_*]$.

If $\rm(A_1)(a)$ holds, then the matrix $A(\lambda)=D\lambda^2+f'({\bf0})$ is also cooperative and irreducible. Hence
$$M(\lambda)=s(A(\lambda)):=\max\{\Re \lambda: \det(\lambda I-A(\lambda))=0\}$$
is a simple eigenvalue of $A(\lambda)$
with an eigenvector $v(\lambda)=(v_1(\lambda),\cdots,v_m(\lambda))\gg0$.
In addition, $M(\lambda)=s(A(\lambda))\geq s(f'({\bf0}))>0$ for any $\lambda\geq0$ (see e.g., \cite[Corollary 4.3.2]{Smith}).
From the argument of \cite[Lemma 2.1]{Fangz}, there also exist $c_*>0$ and $\lambda_*>0$ such that \eqref{eq1.2}  holds,
and for any $c>c_*$, there exists  $\lambda_1:=\lambda_1(c)\in(0,\lambda_*)$ such that $M(\lambda_1) =c \lambda_1$ and $M(\lambda)<c \lambda$ for any $\lambda\in(\lambda_1,\lambda_*]$.

The rest of the paper is organized as follows. In Section 2, we consider the entire solutions of system \eqref{eq1.1} with monostable and cooperative nonlinearity (Theorems \ref{thm2.9} and \ref{thm2.10}).
Section 3 is devoted to the entire solutions of  \eqref{eq1.1} with  monostable and  non-cooperative nonlinearity (Theorem \ref{thm3.5}).
In Section 4,  we apply our abstract results to the above models \eqref{eq1.3}--\eqref{eq1.4}. Finally, conclusions and discussions are given in Section 5.

\section{Entire solutions for cooperative systems}

\noindent

In this section, we consider the entire solutions of \eqref{eq1.1} with monostable and cooperative nonlinearity. In addition to $\rm( A_0)$ and $\rm( A_1)$, we also need the following assumptions:
 \begin{description}
\item[$\rm( A_2)$] System\eqref{eq1.1} is cooperative  on $[{\bf 0},{\bf K}]$, that is,  $\partial_jf_i (u)\geq0$ for all $u\in [{\bf 0},{\bf K}]$ and $1\leq j\neq i\leq m$.
\item[$\rm(A_3)$]  For any $k\in\mathbb{Z}^+$, $\rho_1,\cdots,\rho_k>0$ and $\lambda_1,\cdots, \lambda_k\in[0, \lambda^*]$,
\[  f\big (\min\{  {\bf K},   \rho_1 v(\lambda_1)+\cdots  +\rho_k v(\lambda_k)\}\big ) \leq f'({\bf0})\big [\rho_1 v(\lambda_1)+\cdots + \rho_k v(\lambda_k)\big  ]. \]
\end{description}
Here, $v(\lambda)\gg0$ is the eigenvector
of $A(\lambda)$ corresponding to $M(\lambda)$.

\begin{remark}\label{rem2.1} {\rm It is easily seen that if $f(u)\leq f'({\bf0})u$ for $u\in [{\bf 0},{\bf K}]$, then $\rm(A_3)$ holds spontaneously. We also note that if $f$ is defined on $[ 0,+\infty)^m$, then $\rm(A_3)$ can
be replaced by $\rm(A_3)^*$:
 \begin{description}
\item[$\rm(A_3)^*$] For any $k\in\mathbb{Z}^+$, $\rho_1,\cdots,\rho_k>0$ and $\lambda_1,\cdots, \lambda_k\in[0, \lambda^*]$,
\[  f\big (  \rho_1 v(\lambda_1)+\cdots  +\rho_k v(\lambda_k)\big ) \leq f'({\bf0})\big [\rho_1 v(\lambda_1)+\cdots + \rho_k v(\lambda_k)\big  ]. \]
\end{description}
}
\end{remark}

From the arguments of \cite[Theorem 3.1]{Fangz} and \cite[Theorem 2.1]{Wang}, we have the following result.
\begin{proposition}\label{Pro2.1} Let $\rm( A_0)$--$\rm( A_3)$ hold. For every $c\geq c_*$ and  $\nu \in\mathbb{R}^N$ with $\|\nu\|=1$, \eqref{eq1.1} admits a traveling front $$\Phi_c(\xi)=(\phi_{1,c}(\xi),\cdots,\phi_{m,c}(\xi)),{\ }\xi=x\cdot\nu+ct,$$ which satisfies $\Phi_c(-\infty)={\bf 0}$, $\Phi_c(+\infty)={\bf K}$ and $\Phi_c(\cdot)\gg{\bf 0}$. Furthermore, there holds
 $$\lim_{\xi\rightarrow-\infty}\Phi_{c}(\xi)e^{-\lambda_1(c)\xi}=v(\lambda_1(c))\text{ and }
\Phi_{c}(\xi)\leq v(\lambda_1(c))e^{\lambda_1(c)\xi}\text{ for all }\xi\in\mathbb{R}.$$
\end{proposition}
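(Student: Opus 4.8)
The plan is to realize the traveling front $\Phi_c$ as a fixed point of an integral operator built from the parabolic semigroup, working on a suitable ordered Banach space of profiles on $\mathbb{R}$, and then to extract the asymptotics from a sub-/super-solution sandwich. First I would rewrite the profile equation $D\Phi'' - c\Phi' + f(\Phi) = 0$ (with $\xi = x\cdot\nu + ct$) in the standard way: pick $\beta > 0$ large enough that $u \mapsto \beta u + f(u)$ is nondecreasing on $[{\bf 0},{\bf K}]$ (possible by $(A_0)$--$(A_2)$ since $f\in C^2$ and the system is cooperative), convert to the fixed-point form $\Phi = \mathcal{T}[\Phi]$ where $\mathcal{T}$ is the convolution with the Green kernel of $D\partial_{\xi\xi} - c\partial_\xi - \beta$ applied to $\beta\Phi + f(\Phi)$. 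Monotonicity of $\mathcal{T}$ with respect to the componentwise order, together with the fact that ${\bf 0}$ and ${\bf K}$ are ordered equilibria with no intermediate positive equilibrium, sets up the Tarski/monotone-iteration machinery exactly as in \cite[Theorem 3.1]{Fangz} and \cite[Theorem 2.1]{Wang}; this is precisely the point at which I would invoke those two theorems rather than redo the construction.

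The heart of the matter — and the part I expect to be the main obstacle — is the sub-/supersolution pair that both produces a nontrivial front and pins down the exponential decay rate at $-\infty$. For $c > c_*$, using $(A_1)$ and the definition \eqref{eq1.2} of $\lambda_1 = \lambda_1(c) \in (0,\lambda_*)$ with $M(\lambda_1) = c\lambda_1$, the natural supersolution is $\overline{\Phi}(\xi) = \min\{{\bf K},\, v(\lambda_1)e^{\lambda_1\xi}\}$: on the region where $v(\lambda_1)e^{\lambda_1\xi} \le {\bf K}$ one checks $D\lambda_1^2 v(\lambda_1) - c\lambda_1 v(\lambda_1) + f'({\bf 0})v(\lambda_1) = (A(\lambda_1) - c\lambda_1)v(\lambda_1) = {\bf 0}$, and then $(A_3)$ upgrades $f(\overline\Phi) \le f'({\bf 0})\overline\Phi$ so that $\overline\Phi$ is a genuine supersolution (with the usual care at the corner where the min switches, where one uses that $\min$ of two supersolutions is a supersolution in the integral/viscosity sense). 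For the subsolution I would take $\underline{\Phi}(\xi) = \max\{{\bf 0},\, v(\lambda_1)e^{\lambda_1\xi} - q\,v(\eta)e^{\eta\xi}\}$ with $\eta \in (\lambda_1,\min\{2\lambda_1,\lambda_*\}]$ a slightly larger exponent and $q$ large; the inequality $M(\eta) < c\eta$ from $(A_1)$ gives the needed slack to absorb the quadratic remainder of $f$ near ${\bf 0}$ (here the $C^2$ hypothesis in $(A_0)$ is essential), and irreducibility/positivity of $v(\lambda)$ keeps the combination in the right cone. The monotone iteration started from $\overline\Phi$ and from $\underline\Phi$ then converges to profiles, and the squeeze $\underline\Phi \le \Phi_c \le \overline\Phi$ forces both $\Phi_c(-\infty) = {\bf 0}$, the bound $\Phi_c(\xi) \le v(\lambda_1)e^{\lambda_1\xi}$, and, since $\underline\Phi(\xi)e^{-\lambda_1\xi} \to v(\lambda_1)$, the sharp limit $\lim_{\xi\to-\infty}\Phi_c(\xi)e^{-\lambda_1\xi} = v(\lambda_1)$. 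That the limit at $+\infty$ is ${\bf K}$ (not some intermediate equilibrium) follows from monotonicity of the iteration together with the "no other positive equilibrium" clause of $(A_0)$ and irreducibility, which prevents the profile from stalling.

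For the critical speed $c = c_*$ I would not attempt a direct construction but instead pass to the limit $c \downarrow c_*$: uniform parabolic estimates on the normalized profiles (translating each $\Phi_c$ so that, say, some fixed component equals a fixed level at $\xi = 0$) give a convergent subsequence whose limit $\Phi_{c_*}$ solves the profile equation, connects ${\bf 0}$ to ${\bf K}$ by the same equilibrium-exclusion argument, and is monotone-type enough to have $\Phi_{c_*}(\pm\infty)$ as claimed; the asymptotic statement at $c=c_*$ is the degenerate $\lambda_1(c_*) = \lambda_*$ case and follows from the same one-sided bound $\Phi_{c_*}(\xi)\le v(\lambda_*)e^{\lambda_*\xi}$ surviving the limit, combined with a Harnack/irreducibility argument to get the matching lower asymptotics. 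The one technical wrinkle worth flagging is the degenerate-diffusion possibility ($d_i \ge 0$ rather than $d_i > 0$) arising in the applications \eqref{eq1.3}--\eqref{eq1.4}; in the abstract theorem we have assumed $D \gg {\bf 0}$, so the parabolic regularity used above is available without modification, and I would simply note that the applications require the separate degenerate treatment carried out in the cited references.
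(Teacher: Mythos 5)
For $c>c_*$ your plan coincides with the actual source of the result: the paper offers no proof of Proposition \ref{Pro2.1} at all, but quotes it from the arguments of \cite[Theorem 3.1]{Fangz} and \cite[Theorem 2.1]{Wang}, and those arguments are precisely the monotone iteration you describe, squeezed between the supersolution $\min\{{\bf K},\,v(\lambda_1)e^{\lambda_1\xi}\}$ (where $\rm(A_3)$ is exactly what makes the truncated exponential a supersolution) and a subsolution of the form $\max\{{\bf 0},\,v(\lambda_1)e^{\lambda_1\xi}-q\,v(\eta)e^{\eta\xi}\}$ with $\eta\in(\lambda_1,\lambda_*]$, using $M(\eta)<c\eta$ and the $C^2$ bound near ${\bf 0}$ to absorb the quadratic remainder. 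That part of your proposal is sound and matches the cited construction, including the identification of $\Phi_c(+\infty)={\bf K}$ via the absence of intermediate equilibria.

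The genuine gap is your treatment of the critical speed. Your claim that the bound $\Phi_{c_*}(\xi)\le v(\lambda_*)e^{\lambda_*\xi}$ ``survives the limit'' $c\downarrow c_*$ and that a Harnack/irreducibility argument then yields $\lim_{\xi\to-\infty}\Phi_{c_*}(\xi)e^{-\lambda_*\xi}=v(\lambda_*)$ does not hold up. To extract a convergent subsequence you must renormalize the profiles by translations $\tau_c$, and the upper bound is not translation invariant: after shifting, the constant becomes $e^{\lambda_1(c)\tau_c}v(\lambda_1(c))$, over which you have no a priori control as $c\downarrow c_*$. More fundamentally, at $c=c_*$ the relation $M(\lambda)=c_*\lambda$ has a double root at $\lambda_*$, and under KPP-type conditions such as $\rm(A_3)$ the critical front generically decays like $|\xi|e^{\lambda_*\xi}$ (already for the scalar Fisher--KPP equation), so a pure exponential upper bound and the stated limit cannot be produced by a limit-plus-Harnack sketch. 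What the cited theorems actually deliver is existence of fronts for all $c\ge c_*$ together with the exact asymptotics and the pointwise bound for $c>c_*$; this is also all that the paper uses afterwards, since Theorems \ref{thm2.9} and \ref{thm2.10} take $c_i>c_*$. You should therefore either restrict the asymptotic part of your argument to $c>c_*$ (and invoke the references only for existence at $c=c_*$), or give a genuinely different analysis of the decay at the critical speed.
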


In the remainder of this section, we first give some comparison theorems for sub and supersolutions of \eqref{eq1.1}. We then state the main results for the cooperative system (Theorems \ref{thm2.9} and \ref{thm2.10}) and establish the existence and asymptotic behavior of spatially independent solutions.
Finally, we prove Theorems \ref{thm2.9} and \ref{thm2.10} by constructing appropriate subsolutions and upper estimates and using a general comparison principle.

\subsection{Preliminaries}

\noindent

Consider the  initial value problem of \eqref{eq1.1} with initial condition:
\begin{eqnarray}
          u(x,\tau)=\varphi(x),{\ \ }x\in\mathbb{R}^N,
      \label{eq2.1}
\end{eqnarray}
where $\tau\in\mathbb{R}$ is an any given constant.

Let $X={\rm BUC}(\mathbb{R}^N,\mathbb{R}^m)$ be the Banach space of all
bounded and uniformly continuous functions from $\mathbb{R}^N$ into
$\mathbb{R}^m$ with the supremum norm $\|\cdot\|_X$.  For simplicity, we denote $W=[{\bf 0},{\bf K}]$ and  $[{\bf 0},{\bf K}]_X =\left\{\phi\in
X:{\bf 0}\leq \phi(x)\leq {\bf K}, x\in \mathbb{R}^N\right\}$. Take $L=\max_{i=1,\cdots,m}\{|\partial_if_i(u)|\big|u\in [{\bf 0},{\bf K}]\}$ and define
\[ Q(u)=(Q_1(u),\cdots,Q_m(u))=f(u)+Lu,\ u\in W.\]
Clearly, $Q(u)$ is non-decreasing in $ u$ for $ u\in W.$
We further define a family of linear operator
\begin{equation}
T(t)={\rm diag}(T_1(t),\cdots,T_m(t)): X\rightarrow X,\ t\geq0,
\label{eq2.2}
\end{equation}
 by
$T_i(0)=I$ and
\[
(T_i(t)\phi)(x)=  e^{-L t}
\int_{\mathbb{R}^N} \Psi_i(y,t)\phi(x-y)dy,\
\forall x\in\mathbb{R}^N,{\ } t>0, {\ }\phi(x)\in {\rm BUC}(\mathbb{R}^N,\mathbb{R}),
\]
where \[
\Psi_i(x,t)=\frac{1}{(4d_i\pi t)^{N/2}}\exp\left\{-\frac{\|x\|^2}{4d_it}\right\},\ i=1,\cdots,m.  \]

The definitions of sub- and supersolutions of (\ref{eq1.1}) are given as follows.
\begin{definition}\label{def2.2}
A continuous function $u=(u_1,\cdots,u_m):\mathbb{R}^N\times  [\tau,+\infty)\rightarrow W$
is called a supersolution of (\ref{eq1.1}) on $[\tau,+\infty)$ if
\begin{eqnarray}
u(x,t)\geq T(t-\tau)u(x,\tau)+ \int_\tau^tT(t-s)Q(u(x,s))ds,{\ \ }\forall x\in\mathbb{R}^N,t>\tau,
 \label{eq2.3}
\end{eqnarray}
A subsolution of (\ref{eq1.1}) is defined by reversing the inequality.
\end{definition}

\begin{remark}\label{re2.3}{\rm
Let $w=(w_1,\cdots,w_m):\mathbb{R}^N\times  [\tau,+\infty)\rightarrow W$ be a continuous function with the property that $w_i$ is
$C^1$ in $t$ and $C^2$ in $x$. It is easy to see that if $w$ satisfies
\begin{eqnarray*}
w_t\geq (or \leq)D\Delta w+f(w),{\ \ } \forall x\in\mathbb{R}^N,t> \tau,
\end{eqnarray*}
then $w$ is a supersolution (or subsolution) of (\ref{eq1.1}) on $[\tau,+\infty)$. }
\end{remark}

By Definition \ref{def2.2}, we have the following results, see e.g., Fang and Zhao \cite{Fangz}.
\begin{lemma}\label{lem2.4} ${\rm (i)}$ For any $\varphi\in [{\bf 0},{\bf K}]_X$,
(\ref{eq1.1}) admits an unique classical solution $u(x,t;\varphi)$ satisfying $u (x,\tau;\varphi)=\varphi(x) $ and ${\bf 0} \leq u(x,t;\varphi)\leq {\bf K}$ for all  $x\in\mathbb{R}^N$ and $t\geq\tau$.\\
${\rm (ii)}$ Let $w^+ (x,t)$ and  $w^- (x,t)$ be a supersolution and a subsolution of (\ref{eq1.1}),
respectively. If $w^+ (\cdot,\tau)\geq
w^- (\cdot,\tau)$, then $w^+ (\cdot,t)\geq
w^- (\cdot,t)$ for all $t\geq\tau$.
\end{lemma}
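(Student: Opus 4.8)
The plan is to route everything through the variation-of-constants formulation
\[ u(\cdot,t)=\mathcal{F}[u](\cdot,t):=T(t-\tau)\varphi+\int_\tau^t T(t-s)Q(u(\cdot,s))\,ds, \]
since a continuous mild solution of this equation is, by the smoothing of the heat semigroups $T_i(t)$ and a bootstrap argument, automatically the classical solution of \eqref{eq1.1}--\eqref{eq2.1}. I would prove part (ii) first and then use it to obtain the global a priori order bound needed in part (i).

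For (ii): given a supersolution $w^+$ and a subsolution $w^-$ with $w^+(\cdot,\tau)\ge w^-(\cdot,\tau)$, the defining inequality \eqref{eq2.3}, together with the positivity and diagonal form of $T(t)$ and with $w^+(\cdot,\tau)\ge w^-(\cdot,\tau)$, gives for $z:=w^+-w^-$ the integral inequality
\[ z(\cdot,t)\ \ge\ \int_\tau^t T(t-s)\big[Q(w^+(\cdot,s))-Q(w^-(\cdot,s))\big]\,ds,\qquad t\ge\tau. \]
Because $f\in C^2([{\bf 0},{\bf K}],\mathbb{R}^m)$ and, by $\rm(A_2)$ and the choice of $L$, $\partial_jQ_i\ge0$ on $W$ for all $i,j$ with each $\partial_jQ_i$ bounded by some $\Lambda>0$, the mean value theorem yields componentwise $Q_i(w^+)-Q_i(w^-)\ge-\Lambda\sum_{j}(z_j)^-\ge-m\Lambda\,\|z^-(\cdot,s)\|_X$, where $z^-=\max\{-z,{\bf 0}\}$. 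Since each $T_i(t)$ is positivity preserving with $\int_{\mathbb{R}^N}\Psi_i(\cdot,t)=1$ and $e^{-Lt}\le1$, feeding this lower bound into the displayed inequality gives $z_i(x,t)\ge-m\Lambda\int_\tau^t\|z^-(\cdot,s)\|_X\,ds$ for every $i$ and $x$, hence $\|z^-(\cdot,t)\|_X\le m\Lambda\int_\tau^t\|z^-(\cdot,s)\|_X\,ds$ on any interval $[\tau,T]$. Gronwall's inequality then forces $z^-\equiv{\bf 0}$, i.e.\ $w^+(\cdot,t)\ge w^-(\cdot,t)$ for all $t\ge\tau$.

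For (i): since $Q$ is Lipschitz on the compact set $W$ and $\|T(t)\|_{\mathcal{L}(X)}\le e^{-Lt}\le1$, the map $\mathcal{F}$ is a contraction on $C([\tau,\tau+\delta],[{\bf 0},{\bf K}]_X)$ for some $\delta>0$ independent of $\varphi\in[{\bf 0},{\bf K}]_X$; that $\mathcal{F}$ maps this set into itself uses that $Q$ is nondecreasing on $W$ with $Q({\bf 0})={\bf 0}$, $Q({\bf K})=L{\bf K}$, and $T(t-\tau){\bf K}+\int_\tau^t T(t-s)L{\bf K}\,ds={\bf K}$. This yields a unique mild (hence classical) solution on $[\tau,\tau+\delta]$ with values in $W$. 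Noting that the constants ${\bf 0}$ and ${\bf K}$ are themselves both super- and subsolutions, part (ii) applied to $u(\cdot,t;\varphi)$ against ${\bf 0}$ and against ${\bf K}$ keeps ${\bf 0}\le u(\cdot,t;\varphi)\le{\bf K}$; since $\delta$ is uniform the solution extends step by step to all $t\ge\tau$. Uniqueness on $[\tau,+\infty)$ follows from the contraction property on each short interval (equivalently, from (ii) with $w^\pm$ the two solutions issued from the same datum).

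The main obstacle is part (ii): because $w^+,w^-$ are only assumed continuous (Definition \ref{def2.2} and Remark \ref{re2.3} deliberately allow non-smooth sub- and supersolutions), one cannot invoke the pointwise parabolic maximum principle, and since the spatial domain is all of $\mathbb{R}^N$ there is no compactness to exploit; the argument must be carried out entirely at the level of the integral inequality, leaning on the positivity of the heat kernels $\Psi_i$, the cooperativity-driven monotonicity of $Q$, and a Gronwall estimate in the sup-norm. In part (i) the only nontrivial point is global existence, which is exactly what the a priori order bounds from (ii) provide; local well-posedness and the upgrade from mild to classical solutions are standard analytic-semigroup facts.
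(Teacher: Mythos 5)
Your proposal is correct, and it is essentially the argument the paper relies on: the paper does not prove Lemma \ref{lem2.4} itself but cites Fang and Zhao, where well-posedness and comparison are obtained exactly through the mild formulation $u=T(t-\tau)\varphi+\int_\tau^t T(t-s)Q(u)\,ds$, the monotonicity of $Q=f+Lu$ on $[{\bf 0},{\bf K}]$, and a Gronwall/iteration estimate of the type you use. Indeed, your part (ii) is the same integral-inequality technique the authors themselves carry out explicitly in Lemma \ref{lem3.1} (there concluded via Thieme's kernel estimate rather than a direct sup-norm Gronwall), so there is nothing substantively different to flag.
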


The following result follows from the standard parabolic estimates (Friedman \cite{Friedman}), see also Wang et al. \cite[Proposition 4.3]{Wangliruan}.
\begin{lemma}\label{lem2.5} Suppose that $u(x,t;\varphi)$ is a solution of (\ref{eq1.1}) with the
initial value $\varphi\in [{\bf 0},{\bf K}]_X$, then there exists a positive constant $M_1$,
independent of $\tau$ and $\varphi$, such that for any
$x\in \mathbb{R}^N$ and $t>\tau+1$,
\[
\left\|\frac{\partial u}{\partial t}(x,t;\varphi)\right\|\leq M_1,{\ }\left\|\frac{\partial^2 u}
{\partial tx_i}(x,t;\varphi)\right\|\leq M_1,{\ }\left\|\frac{\partial^2 u}
{\partial t^2}(x,t;\varphi)\right\|\leq M_1,
 \]
\[
\left\|\frac{\partial u}
{\partial x_i}(x,t;\varphi)\right\|\leq M_1,{\ }\left\|\frac{\partial^2 u}
{\partial x_it}(x,t;\varphi)\right\|\leq M_1,{\ }\left\|\frac{\partial^2 u}
{\partial x_ix_j}(x,t;\varphi)\right\|\leq M_1
\]
\[\left\|\frac{\partial^3 u}
{\partial x_i^2t}(x,t;\varphi)\right\|\leq M_1,{\ }\left\|\frac{\partial^3 u}
{\partial x_i^2x_j}(x,t;\varphi)\right\|\leq M_1,{\ }\forall i,j=1,\cdots,N.
 \]
\end{lemma}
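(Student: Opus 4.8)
The statement is a standard consequence of interior parabolic Schauder/$L^p$ estimates, so the plan is to reduce the system \eqref{eq1.1} to a setting where Friedman's interior estimates apply uniformly in $\tau$ and $\varphi$. First I would note that, by Lemma \ref{lem2.4}(i), the solution $u(x,t;\varphi)$ satisfies ${\bf 0}\le u\le{\bf K}$ for all $x\in\mathbb{R}^N$ and $t\ge\tau$; hence $u$ is \emph{a priori} bounded, with bound $\|{\bf K}\|$ independent of $\tau$ and $\varphi$. Because $f\in C^2([{\bf0},{\bf K}],\mathbb{R}^m)$ by $\rm(A_0)$, the nonlinearity $f(u(x,t))$ and its first derivatives in $(x,t)$ are controlled by the $C^1$-norm of $u$, and $f$ together with $\nabla f$, $D^2f$ is uniformly bounded on the compact set $W=[{\bf0},{\bf K}]$. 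I would then translate time so that $\tau=0$ (the estimates are to hold for $t>\tau+1$, a condition invariant under this shift) and work on, say, the slab $\mathbb{R}^N\times[\tfrac12,\infty)$.

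The core step is a bootstrap argument applied on unit parabolic cylinders. Fix $(x_0,t_0)$ with $t_0>\tau+1=1$ and consider the cylinder $Q_{x_0,t_0}=B(x_0,1)\times(t_0-\tfrac12,t_0]$, which is contained in $\mathbb{R}^N\times(\tfrac12,\infty)$. On the doubled cylinder, $u$ solves the linear-looking equation $u_t=D\Delta u+g(x,t)$ with $g(x,t):=f(u(x,t))$ bounded by $\sup_W\|f\|$. Interior $L^p$ estimates (Friedman \cite{Friedman}) give a bound on $u$ in $W^{2,1}_p$ on a slightly smaller cylinder, hence by Sobolev embedding a bound on $\nabla_x u$ and on $u$ in a Hölder class, with constants depending only on $N$, $p$, the diffusion constants $d_i$, and $\sup_W\|f\|$ — in particular independent of $(x_0,t_0)$, $\tau$, and $\varphi$. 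This yields the first-order bounds $\|\partial u/\partial x_i\|\le M_1$. Differentiating the equation formally in $x_i$, the function $v:=\partial u/\partial x_i$ satisfies $v_t=D\Delta v+ f'(u)v$; since $f'(u)$ is now a bounded (Hölder-continuous, by the step just obtained) coefficient matrix and $v$ is bounded, interior Schauder estimates applied to $v$ give bounds on $\partial^2 u/\partial x_i\partial x_j$ and $\partial^2 u/\partial x_i\partial t$, and a further differentiation (using $f\in C^2$) upgrades this to the third-order spatial bounds $\partial^3 u/\partial x_i^2 x_j$ and $\partial^3 u/\partial x_i^2 t$. Finally, reading $\partial u/\partial t = D\Delta u + f(u)$ and $\partial^2 u/\partial t^2 = D\Delta(\partial u/\partial t) + f'(u)\partial u/\partial t$ directly, the already-established spatial bounds deliver $\|\partial u/\partial t\|\le M_1$, $\|\partial^2 u/\partial t\,\partial x_i\|\le M_1$, and $\|\partial^2 u/\partial t^2\|\le M_1$. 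Taking $M_1$ to be the largest of the finitely many constants produced completes the argument.

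The only genuine subtlety — the "main obstacle" — is the claim of \emph{uniformity} in $\tau$ and $\varphi$. This is handled precisely by the translation invariance in $t$ (so $\tau$ drops out) and by the fact that all the estimates above are interior: they use only the $L^\infty$-bound ${\bf0}\le u\le{\bf K}$ on the doubled cylinder and never the initial data $\varphi$, so the constants never see $\varphi$. The requirement $t>\tau+1$ is exactly what gives a full unit of time between the data and the point of estimation, so the parabolic cylinder stays in the interior. One should be a little careful that the successive differentiations are justified — this is standard: once $u\in C^{2,1}$ the equation for $v=\partial_{x_i}u$ is legitimate, and the regularity of $u$ then bootstraps as described; a reference such as \cite{Wangliruan}, Proposition 4.3, records exactly this conclusion for closely related systems, and the present setting differs only cosmetically.
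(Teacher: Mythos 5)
Your proposal is correct and follows exactly the route the paper itself indicates: the paper gives no detailed proof but simply invokes the standard interior parabolic estimates of Friedman \cite{Friedman} together with \cite[Proposition 4.3]{Wangliruan}, and your argument (a priori bound ${\bf 0}\le u\le {\bf K}$ from Lemma \ref{lem2.4}, time translation to remove $\tau$, interior $L^p$/Schauder estimates on unit cylinders, and bootstrapping by differentiating the equation using $f\in C^2$) is precisely the standard argument behind that citation, with the uniformity in $\tau$ and $\varphi$ handled correctly by translation invariance and the interior nature of the estimates.
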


Similar to Lemma \ref{lem2.4}(ii), we have the following result.
\begin{lemma}\label{lem2.6}
Let $u^+\in C\big( \mathbb{R}^N\times [\tau,+\infty) ,[0,+\infty)^m\big)$ and
$$u^-\in
C\big( \mathbb{R}^N\times [\tau,+\infty) , (-\infty,K_1]\times\cdots\times(-\infty,K_m]\big)$$ be such that
$u^+(\cdot,\tau)\geq u^-(\cdot,\tau) $ and
    \begin{eqnarray*}
&&  u_t^+\geq D\Delta u^++f'({\bf0})u^+,{\ \ } \forall x\in\mathbb{R}^N,t> \tau,\\
 && u_t^-\leq D\Delta u^-+f'({\bf0})u^-,{\ \ } \forall x\in\mathbb{R}^N,t> \tau.
\end{eqnarray*}
Then, $u^+(x,t)\geq
u^-(x,t)$ for all $x\in \mathbb{R}^N$ and $t\geq \tau$.
\end{lemma}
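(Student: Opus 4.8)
The plan is to reduce the claim to a comparison principle for a \emph{bounded-above} subsolution of the cooperative linear system $z_t=D\Delta z+f'({\bf 0})z$ and then close the remaining gap with a growing-barrier (Phragm\'en--Lindel\"of type) argument. First I would set $w=u^+-u^-$ and $B:=f'({\bf 0})$; by $\rm(A_2)$ the off-diagonal entries $\partial_jf_i({\bf 0})$ are nonnegative, so $B$ is cooperative. From the hypotheses, $w(\cdot,\tau)\ge{\bf 0}$ and $w_t\ge D\Delta w+Bw$ for $x\in\mathbb{R}^N$, $t>\tau$. What prevents a direct appeal to Lemma \ref{lem2.4}(ii) is that $u^+$ and $u^-$ need not be bounded; but the prescribed ranges of $u^+$ and $u^-$ force $w\ge-{\bf K}$ on $\mathbb{R}^N\times[\tau,+\infty)$, i.e. $v:=-w=u^--u^+$ satisfies $v\le{\bf K}$, $v(\cdot,\tau)\le{\bf 0}$ and $v_t\le D\Delta v+Bv$. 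Hence it suffices to show that any subsolution $v$ of $z_t=D\Delta z+Bz$ with $v\le{\bf K}$ and $v(\cdot,\tau)\le{\bf 0}$ must satisfy $v\le{\bf 0}$.

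Next I would construct a one-parameter family of growing barriers. With ${\bf 1}=(1,\dots,1)$, set $h(x,t)=e^{\gamma(t-\tau)}\big(\alpha+\|x\|^2\big){\bf 1}$ for constants $\gamma,\alpha>0$ to be chosen. Using $\Delta\|x\|^2=2N$ one computes
\[
h_t-D\Delta h-Bh=e^{\gamma(t-\tau)}\Big[\big(\gamma{\bf 1}-B{\bf 1}\big)\big(\alpha+\|x\|^2\big)-2N(d_1,\dots,d_m)^{T}\Big],
\]
which is $\ge{\bf 0}$ once $\gamma$ is large enough that $\gamma{\bf 1}-B{\bf 1}\ge{\bf 1}$ and then $\alpha\ge 2N\max_id_i$; so $h$ is a classical supersolution of the linear system on $\mathbb{R}^N\times[\tau,+\infty)$. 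For $\varepsilon>0$ put $v_\varepsilon:=v-\varepsilon h$: it is a subsolution, $v_\varepsilon(\cdot,\tau)\le-\varepsilon\alpha{\bf 1}<{\bf 0}$, and since $v\le K_{\max}{\bf 1}$ (with $K_{\max}:=\max_iK_i$) while $\varepsilon h(x,t)\ge\varepsilon\big(\alpha+\|x\|^2\big){\bf 1}\to+\infty$ as $\|x\|\to\infty$, there is $R=R(\varepsilon)>0$ with $v_\varepsilon<{\bf 0}$ on $\{\|x\|\ge R\}\times[\tau,+\infty)$.

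Finally I would localize. Fix $T>\tau$; on the bounded cylinder $\{\|x\|\le R\}\times[\tau,T]$ the function $v_\varepsilon$ is a $C^{2,1}$ subsolution of the cooperative linear system and $v_\varepsilon\le{\bf 0}$ on the parabolic boundary (both on $t=\tau$ and on $\|x\|=R$), so the classical maximum principle for cooperative parabolic systems on bounded domains (see, e.g., \cite{Smith}) gives $v_\varepsilon\le{\bf 0}$ throughout that cylinder. Since $T$ is arbitrary and any fixed point lies in such a cylinder once $R$ is taken large, $v\le\varepsilon h$ on $\mathbb{R}^N\times[\tau,+\infty)$ for every $\varepsilon>0$; letting $\varepsilon\downarrow0$ yields $v\le{\bf 0}$, i.e. $u^+\ge u^-$. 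I expect the only genuine obstacle to be the lack of boundedness of $u^\pm$: the differential-inequality comparison of Lemma \ref{lem2.4}(ii) does not apply directly, and the key is to carry the one-sided a priori bound $w\ge-{\bf K}$ inherited from the stated ranges through the barrier argument; verifying that $h$ is a supersolution, invoking the cooperative maximum principle on the truncated cylinders, and passing to the limit are all routine.
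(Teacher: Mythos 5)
Your proof is correct, but it takes a genuinely different route from the paper. The paper offers no written proof of Lemma \ref{lem2.6}: it simply asserts the result ``similar to Lemma \ref{lem2.4}(ii)'', i.e.\ it is implicitly run through the same mild/integral formulation used throughout (the semigroup $T(t)$ together with a monotone shifted nonlinearity and a Thieme--Gronwall estimate, exactly the mechanism made explicit later in Lemma \ref{lem3.1}, where the relevant quantity $[u^--u^+]_+\leq{\bf K}$ is bounded so the iteration closes). You instead give a self-contained classical-PDE argument: reduce to showing that $v=u^--u^+$, which the stated ranges force to satisfy $v\leq{\bf K}$, is nonpositive; build the quadratically growing supersolution $h(x,t)=e^{\gamma(t-\tau)}(\alpha+\|x\|^2){\bf 1}$ of the cooperative linear system (your computation and the choice $\gamma{\bf 1}-B{\bf 1}\geq{\bf 1}$, $\alpha\geq 2N\max_i d_i$ are right, with $B=f'({\bf 0})$ cooperative by $\rm(A_2)$); and then apply the maximum principle for weakly coupled cooperative systems on truncated cylinders before letting $\varepsilon\downarrow 0$. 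This Phragm\'en--Lindel\"of approach buys a transparent explanation of why the one-sided range hypotheses (only an upper bound on $u^--u^+$) suffice even though $u^+$ (e.g.\ $\Pi$) is unbounded in $x$, and it avoids the mild-solution machinery entirely; its only cost is that it uses the classical ($C^{2,1}$) reading of the differential inequalities, which is indeed how the lemma is applied in the paper (to $\Pi$, $\widehat W$ and the smooth solutions $U^n$, $W^n$), whereas the paper's integral-formulation route stays uniform with Definition \ref{def2.2} and Lemmas \ref{lem2.4} and \ref{lem3.1} and would also cover integral sub- and supersolutions.
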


\subsection{Main results for cooperative systems}

\noindent

Before to state our main results, we give the following definition and notation.
\begin{definition}\label{def2.8} Let $n\in\mathbb{N}$ and $p,p_0\in \mathbb{R}^n$.
We say that the functions $W_p(x,t)=\big(W_{1;p}(x,t),\cdots,W_{m;p} (x,t)\big)$ converge to  $W_{p_0} (x,t)=\big(W_{1;p_0} (x,t),\cdots,W_{m;p_0} (x,t)\big)$ as $p\rightarrow p_0$ in the sense of the topology $\mathcal{T}$ if, for any compact set $S\subset\mathbb{R}^{N+1}$, the functions $W_p$, $\partial _tW_{p}$, $\partial _{x_i}W_{p}$,
$\partial _{x_i^2}W_{p}$, $i=1,\cdots,N$, converge uniformly in $S$ to $W_{p_0}$, $\partial _tW_{p_0}$, $\partial _{x_i} W_{p_0}$, $\partial _{x_i^2}W_{p_0}$, $i=1,\cdots,N$,
 as $p\rightarrow p_0$.
\end{definition}
{\it Notation}:
 For any $l\in\mathbb{Z}^+$, $\nu_i\in\mathbb{R}^N$, $i=1,\cdots,l$, $A\in{\mathbb R}$ and $a\in{\mathbb R}$, denote the regions $T^i_{A,a}$ and $\tilde{T}^{i}_{A,a}$, $i=1,\cdots,l+1$, by
\begin{eqnarray*}
T^i_{A,a}:=\big\{x\in\mathbb{R}^N\big|x\cdot\nu_i\geq A \big\}\times[a,+\infty),{\ }i=1,\cdots,l,\ T^{l+1}_{A,a}:=\mathbb{R}^N\times[a,+\infty),\\
\tilde{T}^i_{A,a}:=\big\{x\in\mathbb{R}^N\big|x\cdot\nu_i\leq A \big\}\times(-\infty,a],{\ }i=1,\cdots,l,\ \tilde{T}^{l+1}_{A,a}:=\mathbb{R}^N\times(-\infty,a].
\end{eqnarray*}

Now, we state the main results for the cooperative system as follows.
\begin{theorem}\label{thm2.9} Let $\rm( A_0)$--$\rm( A_3)$ hold. Then, for any $l\in\mathbb{Z}^+$, $\nu_1,\cdots,\nu_l\in\mathbb{R}^N$ with $\|\nu_i\|=1$, $h_1,\cdots,h_{l+1}\in\mathbb{R}$, $c_1,\cdots,c_l>c_*$, and $\chi_1,\cdots,\chi_{l+1} \in\{0,1\}$ with $\chi_1+\cdots+\chi_{l+1}\geq2$, there exists an entire solution $U_p(x,t):=\big(U_{1;p} (x,t),\cdots,U_{m;p}  (x,t)\big)$ of \eqref{eq1.1} such that
\begin{eqnarray}
 \underline{u} (x,t)\leq U_p(x,t)\leq \min\big\{ {\bf K},\Pi(x,t)\big\},\quad \forall (x,t)\in\mathbb{R}^{N+1},
\label{eq2.7}
\end{eqnarray}
where
$p:=p_{\chi_1,\cdots,\chi_{l+1} }=\big( \chi_1c_1,\chi_1h_1, \chi_1\nu_1,\cdots,\chi_lc_l,  \chi_lh_l, \chi_l\nu_l, \chi_{l+1} h_{l+1}\big)$ and
\begin{eqnarray*}
 &&\underline{u} (x,t):=\max\Big\{\max_{i=1,\cdots,l}\chi_i\Phi_{c_i} \big(x\cdot\nu_i+c_it+h_i\big),\chi_{l+1} \Gamma(t+h_{l+1})\Big\},\\
 &&\Pi(x,t):= \sum_{i=1}^l \chi_iv(\lambda_1(c_i))
e^{\lambda_1(c_i)(x\cdot\nu_i+c_it+h_i)}+\chi _{l+1} v^* e^{\lambda^*(t+h_{l+1})}.
\end{eqnarray*}
Here, $\Gamma(t)$ is the spatially independent solution of \eqref{eq1.1} decided in Lemma \ref{lem2.7}, $\lambda^*=M(0)$ and $v^*=v(0)$.

Furthermore, the following statements hold:
\begin{description}
\item[$\rm(i)$] ${\bf0}\ll U_p (x,t)\ll {\bf K}$ and $\frac{\partial}{\partial t}U_p (x,t)\gg {\bf0}$ for all $(x,t)\in\mathbb{R}^{N+1}$.

\item[$\rm(ii)$] $\lim_{t\rightarrow-\infty}\sup_{\|x\|\leq A}\|U_p(x,t)\big\|=0$ for any $A\in\mathbb{N}$.

\item[$\rm(iii)$]  If $\chi_{l+1} =0$, then $\lim_{t\rightarrow+\infty}\sup_{\|x\|\leq A}\big\|U_p(x,t)-{\bf K}\big\|=0$ for any $A\in\mathbb{R}_+$, and if $\chi_{l+1} =1$, then $\lim_{t\rightarrow+\infty}\sup_{x\in \mathbb{R}^N}\big\|U_p(x,t)-{\bf K}\big\|=0$.

\item[$\rm(iv)$]If $\chi_{l+1} =1$, then for every $x\in\mathbb{R}^N$,
$$U_p(x,t)\sim \Gamma(t+h_{l+1})\sim v^*e^{\lambda^*(t+h_{l+1})}\text{ as }t\rightarrow-\infty.$$

\item[$\rm(v)$] If $\chi_{l+1} =0$, then for every $x\in\mathbb{R}^N$,
$$U_p(x,t)=O\big(e^{\vartheta(c_1,\cdots,c_l)t}\big) \text{ as }t\rightarrow-\infty,$$
  where $\vartheta(c_1,\cdots,c_l)=\min\big\{c_1\lambda_1(c_1),\cdots,c_l\lambda_1(c_l)\big\}$.

\item[$\rm(vi)$]  For any $(x,t)\in\mathbb{R}^{N+1}$, $U_p(x,t)$ is increasing with respect to $h_i$, $i=1,\cdots,l+1.$

\item[$\rm(vii)$] When   $N=2$ and $l=2$, let us denote $\nu_i=(\cos \theta_i,\sin\theta_i)$, $\theta_i\in[0,2\pi)$, $i=1,2$. If $\chi_1=0$, then
 \begin{align*}
\frac{\partial}{\partial x_1}U_p (x,t)=\left\{
\begin{array}{ll}
\geq0,& \theta_2\in[0,\frac{\pi}{2}]\cup[\frac{3\pi}{2},2\pi]
 ;\\
\leq0 ,&\theta_2\in[\frac{\pi}{2},\frac{3\pi}{2}]
\end{array}
\right.
\end{align*}
and
  \begin{align*}
\frac{\partial}{\partial x_2}U_p (x,t)=\left\{
\begin{array}{ll}
\geq0,& \theta_2\in[0,\pi] ;\\
\leq0 ,&   \theta_2\in[\pi,2\pi].
\end{array}
\right.
\end{align*}
Similar results hold true for $N=l=2$ and $\chi_2=0$.

 \item[$\rm(viii)$]  $U_p(x,t)$ converges to {\bf K} as $h_i\rightarrow+\infty$ in $\mathcal{T}$ and uniformly on $(x,t)\in T^i_{A,a}$ for any $A,a\in\mathbb{R}$, $i=1,\cdots,l+1$.
 \end{description}
\end{theorem}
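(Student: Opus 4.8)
The plan is to obtain the entire solution $U_p$ as the limit of a sequence of solutions to initial value problems on $[-n,+\infty)$ with initial data given by the subsolution, and then to verify all the listed properties by passing to the limit in the corresponding estimates. Concretely, first I would observe that by $\rm(A_2)$ and Proposition~\ref{Pro2.1} each $\Phi_{c_i}(x\cdot\nu_i+c_it+h_i)$ and the SIS $\Gamma(t+h_{l+1})$ are solutions of \eqref{eq1.1}, so by Remark~\ref{re2.3} and the cooperativity the function $\underline u(x,t)$, being a finite maximum of solutions, is a subsolution of \eqref{eq1.1} (this uses that a pointwise maximum of subsolutions is a subsolution, which follows from Definition~\ref{def2.2} and monotonicity of $T(t)$ and $Q$). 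The hard part — and the reason the auxiliary function $\Pi$ is introduced — is to produce a supersolution that dominates $\underline u$ and that decays to ${\bf 0}$ as $t\to-\infty$ uniformly on compact $x$-sets; this is exactly the content of the ``upper estimate'' Lemma~\ref{lem2.11} alluded to in the introduction, which I would invoke to get $\min\{{\bf K},\Pi(x,t)\}\ge \underline u(x,t)$ and to get that $\min\{{\bf K},\Pi\}$ is a supersolution. Granting that, $\underline u\le\min\{{\bf K},\Pi\}$ on $\mathbb R^{N+1}$.

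Next I would define $u^n(x,t)$ to be the solution of \eqref{eq1.1} on $[-n,+\infty)$ with $u^n(x,-n)=\underline u(x,-n)$. By Lemma~\ref{lem2.4}(ii), since $\underline u(\cdot,-n)\le u^n(\cdot,-n)\le \min\{{\bf K},\Pi(\cdot,-n)\}$, we get $\underline u(x,t)\le u^n(x,t)\le \min\{{\bf K},\Pi(x,t)\}$ for all $t\ge -n$; moreover comparing initial data at time $-n$ for consecutive indices shows $u^{n+1}(x,-n)\ge u^n(x,-n)$, hence $u^{n+1}\ge u^n$ on $[-n,+\infty)$, so $\{u^n\}$ is monotone increasing in $n$ and bounded above by $\min\{{\bf K},\Pi\}$. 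Using the parabolic estimates of Lemma~\ref{lem2.5} (uniform in $n$), a diagonal/Arzel\`a--Ascoli argument gives a subsequence converging in $C^{1,2}_{loc}$ to an entire solution $U_p$ of \eqref{eq1.1} satisfying \eqref{eq2.7}. The monotonicity in $n$ in fact gives convergence of the whole sequence.

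Then I would check the eight properties by limiting arguments. For $\rm(i)$: $U_p\gg{\bf0}$ follows from \eqref{eq2.7} and the strong maximum principle (irreducibility from $\rm(A_1)$); $U_p\ll{\bf K}$ from $U_p\le\min\{{\bf K},\Pi\}$ together with the strong maximum principle applied to ${\bf K}-U_p$; and $\partial_t U_p\gg{\bf0}$ because each $u^n$ can be shown to be time-increasing — $u^n(x,-n+s)\ge u^n(x,-n)$ for $s>0$ since $\underline u$ is a subsolution, hence by the comparison principle $u^n$ is nondecreasing in $t$, and strict positivity of $\partial_t U_p$ again follows from the strong maximum principle applied to the cooperative linear system satisfied by $\partial_t U_p$. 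For $\rm(ii)$: on $\|x\|\le A$, $\Pi(x,t)\to{\bf0}$ as $t\to-\infty$, so \eqref{eq2.7} forces $U_p\to{\bf0}$. For $\rm(iii)$: when $\chi_{l+1}=1$, the lower bound $\chi_{l+1}\Gamma(t+h_{l+1})\to{\bf K}$ uniformly in $x$ as $t\to+\infty$ (by the asymptotics of the SIS in Lemma~\ref{lem2.7}) combined with $U_p\le{\bf K}$ gives uniform convergence; when $\chi_{l+1}=0$, at least two of the $\chi_i$ ($i\le l$) equal $1$, and on $\|x\|\le A$ each relevant $\Phi_{c_i}(x\cdot\nu_i+c_it+h_i)\to{\bf K}$ as $t\to+\infty$, giving the stated local uniform convergence. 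For $\rm(iv)$ and $\rm(v)$: fix $x$; when $\chi_{l+1}=1$, squeeze $U_p$ between $\chi_{l+1}\Gamma(t+h_{l+1})$ and $\min\{{\bf K},\Pi(x,t)\}$ and use $\Gamma(s)\sim v^*e^{\lambda^* s}$ as $s\to-\infty$ together with the fact that the $v^*e^{\lambda^*(t+h_{l+1})}$ term dominates the $e^{\lambda_1(c_i)(x\cdot\nu_i+c_it+h_i)}$ terms as $t\to-\infty$ (since $c_i\lambda_1(c_i)\ge M(\lambda_1(c_i))=c_i\lambda_1(c_i)$... more precisely $\lambda^* = M(0)>0$ and each exponential rate $c_i\lambda_1(c_i)>0$, but one must check the SIS rate $\lambda^*$ is the leading one, which holds because $v^* e^{\lambda^* t}$ is the exact leading term of $\Gamma$ and the $\Phi_{c_i}$ terms, being bounded by $v(\lambda_1(c_i))e^{\lambda_1(c_i)(x\cdot\nu_i+c_it+h_i)}$, are $o(e^{\lambda^* t})$ when $\lambda^* > 0$ is strictly the spreading exponent — here one uses that $\Gamma$ dominates in the max defining $\underline u$); when $\chi_{l+1}=0$, use $U_p\le\Pi$ which is $O(e^{\vartheta(c_1,\dots,c_l)t})$. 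For $\rm(vi)$ and $\rm(viii)$: monotonicity and convergence in the shift parameters $h_i$ pass through the construction because $\underline u(x,-n)$, and hence each $u^n$, is monotone nondecreasing in each $h_i$ (by the comparison principle), and as $h_i\to+\infty$ on $T^i_{A,a}$ one has $\chi_i\Phi_{c_i}(x\cdot\nu_i+c_it+h_i)\to{\bf K}$, forcing $U_p\to{\bf K}$ there; the $\mathcal T$-topology convergence then follows from Lemma~\ref{lem2.5}. For $\rm(vii)$: when $N=2$, $l=2$, $\chi_1=0$, the subsolution reduces to $\max\{\chi_2\Phi_{c_2}(x\cdot\nu_2+c_2t+h_2),\chi_{l+1}\Gamma(t+h_{l+1})\}$, which is monotone in the direction $\nu_2$; since $u^n(\cdot,-n)$ inherits this monotonicity and translating $x$ along $\pm\nu_2$ (or keeping the SIS part invariant) yields an ordered pair of initial data, the comparison principle gives $\partial_{\nu_2}u^n\ge0$, and reading off the sign of the $x_1$- and $x_2$-components of $\nu_2=(\cos\theta_2,\sin\theta_2)$ gives the stated inequalities after passing to the limit. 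I expect the main obstacle to be establishing the supersolution property of $\min\{{\bf K},\Pi\}$ and the domination $\underline u\le\min\{{\bf K},\Pi\}$ (Lemma~\ref{lem2.11}) and, within property $\rm(iv)$, pinning down that the SIS exponential rate $\lambda^*=M(0)$ genuinely dominates the traveling-front rates $c_i\lambda_1(c_i)$ as $t\to-\infty$, i.e.\ verifying the precise asymptotic equivalence rather than mere boundedness.
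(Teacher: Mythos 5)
Your construction is essentially the paper's: solve the Cauchy problem from time $-n$ with initial datum $\underline u(\cdot,-n)$, squeeze the solutions between $\underline u$ and $\min\{{\bf K},\Pi\}$ via Lemma \ref{lem2.4} and the upper-estimate Lemma \ref{lem2.11}, use monotonicity in $n$ plus the uniform parabolic estimates of Lemma \ref{lem2.5} and a diagonal argument to pass to an entire solution satisfying \eqref{eq2.7}, and then derive the listed properties by squeezing and comparison (the paper proves (i), (iv), (v) in exactly the style you describe, and treats (ii)--(iii), (vi)--(viii) as consequences of \eqref{eq2.7} and the construction). Your treatment of (i) via the strong maximum principle is a legitimate packaging of the paper's explicit kernel/touching-point arguments.

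There is, however, one genuine gap, precisely at the point you flagged in (iv), and the justification you sketch there does not work: whether the front terms of $\Pi$ are negligible against $v^*e^{\lambda^*(t+h_{l+1})}$ as $t\to-\infty$ is a question about the \emph{upper} bound $\Pi$, so the remark that ``$\Gamma$ dominates in the max defining $\underline u$'' (a statement about the lower bound) is irrelevant, and the assertion that the $\Phi_{c_i}$ terms are $o(e^{\lambda^*t})$ is exactly what needs proof. The missing ingredient is the eigenvalue comparison $c\lambda_1(c)=M(\lambda_1(c))\geq M(0)=\lambda^*$ for every $c>c_*$, which the paper obtains in one line from $A(\lambda)\geq A(0)$ and the monotonicity of the principal eigenvalue (\cite[Corollary 4.3.2]{Smith}); this is the inequality to which the paper reduces (iv), and it also gives (v). A second, smaller point concerns (vii): knowing only $\partial_{\nu_2}U_p\geq 0$ does not yield the signs of $\partial_{x_1}U_p$ and $\partial_{x_2}U_p$ separately. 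You need the additional observation that when $\chi_1=0$ the initial data $\varphi^n$ depend on $x$ only through $x\cdot\nu_2$, hence by uniqueness each $U^n$, and therefore $U_p$, is a planar function $V(x\cdot\nu_2,t)$ with $V$ nondecreasing in its first argument; then $\partial_{x_j}U_p=(\nu_2)_j\,\partial_\xi V$, which gives the stated sign pattern according to the signs of $\cos\theta_2$ and $\sin\theta_2$.
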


According to the assumption $\chi_1,\cdots,\chi_{l+1} \in\{0,1\}$ with $\chi_1+\cdots+\chi_{l+1}\geq2$ in Theorem \ref{thm2.9}, we denote the  entire solution $U_{p}(x,t)$ of \eqref{eq1.1} by
\begin{align}\label{p}
U_{p}(x,t):=\left\{
\begin{array}{ll}
U_{p_0}(x,t),&\mbox{if } \big(\chi_1,\cdots,\chi_{l+1} \big)=\big(1,\cdots,1\big);\\
U_{p_i}(x,t),&\mbox{if } \big(\chi_1,\cdots,\chi_{l+1} \big)=\big(1,\cdots,1,0_i,1,\cdots,1\big), {\ }i=1,\cdots,l;\\
U_{p_{l+1}}(x,t),&\mbox{if } \big(\chi_1,\cdots,\chi_{l+1} \big)=(1,\cdots,1,0_{l+1}\big),
\end{array}
\right.
\end{align}
where $p_0=\big( c_1,h_1, \nu_1,\cdots,c_l,  h_l, \nu_l,  h_{l+1}\big)$, $p_{l+1}=\big( c_1,h_1, \nu_1,\cdots,c_l,  h_l, \nu_l,0\big)$ and
 $$p_i=\big(c_1,h_1, \nu_1,\cdots,c_{i-1},h_{i-1}, \nu_{i-1},0,0,0, c_{i+1},h_{i+1},\nu_{i+1},\cdots,c_l,  h_l, \nu_l,h_{l+1}\big),{\ }i=1,\cdots,l.$$
Moreover, we denote
\begin{align}\label{pp}
U_{p}(x,t):=\left\{
\begin{array}{ll}
 U_{p_{i,j}}(x,t),&\mbox{if } \big(\chi_1,\cdots,\chi_{l+1} \big)=\big(1,\cdots,1,0_i,1,\cdots,1,0_j,1,\cdots,1,1\big), \\
  &\quad1\leq i, j \leq l;\\
U_{p_{i,l+1}}(x,t),&\mbox{if }\big(\chi_1,\cdots,\chi_{l+1} \big)=\big(1,\cdots,1,0_i,1,\cdots,1,0_{l+1}\big), {\ }i=1,\cdots,l,
\end{array}
\right.
\end{align}
where
\begin{eqnarray*}
p_{i,j}&= &\big(c_1,h_1, \nu_1,\cdots,c_{i-1},h_{i-1}, \nu_{i-1},0,0,0, c_{i+1},h_{i+1},\nu_{i+1},\cdots,\\
&&\quad c_{j-1},h_{j-1}, \nu_{j-1}, 0,0,0,c_{j+1},h_{j+1},\nu_{j+1},\cdots, c_l,  h_l,\nu_l,h_{l+1}\big), {\ }1\leq i\leq j\leq l,
 \end{eqnarray*}
 and
 $$p_{i,l+1}=\big(c_1,h_1, \nu_1,\cdots,c_{i-1},h_{i-1}, \nu_{i-1},0,0,0, c_{i+1},h_{i+1},\nu_{i+1},\cdots,c_l,  h_l, \nu_l,0 \big),{\ }i=1,\cdots,l.$$

Then we have the following convergence results.
\begin{theorem}\label{thm2.10} Assume $\rm( A_0)$--$\rm( A_3)$. Assume further that
 $f'(u)\leq f'({\bf0})$ for $u\in [{\bf 0},{\bf K}]$. Then, from \eqref{p} and \eqref{pp}, the following properties hold.
\begin{description}
\item[$\rm(i)$]  For any $A\in\mathbb{Z}$ and $a\in\mathbb{R}$, $U_{p_0}(x,t)$ converges to
\begin{align*}
 U_{p_i}(x,t)\ as\ h_i\rightarrow-\infty\ in\ \mathcal{T}, \ and\ uniformly\ on\ (x,t)\in \tilde{T}^i_{A,a},{\ }i=1,\cdots,l+1.
       \end{align*}
   \item[$\rm(ii)$]  For any $A\in\mathbb{R}$ and $a\in\mathbb{R}$, $U_{p_i}(x,t)(i=1,\cdots,l)$ converges to
\begin{align*}
\left\{
\begin{array}{ll}
 U_{p_{i,j}}(x,t)\ as \ h_j\rightarrow-\infty\ in\ \mathcal{T},\ and\ uniformly\ on\ (x,t)\in \tilde{T}^j_{A,a},{\ }1\leq i\neq j\leq l.;\medskip\\
U_{p_{i,l+1}}(x,t)\ as\ h_{l+1}\rightarrow-\infty\ in\ \mathcal{T},\ and  \ uniformly\ on\ (x,t)\in \tilde{T}^{l+1}_{A,a}.
 \end{array}
 \right.
       \end{align*}
 \item[$\rm(iii)$] For any $A\in\mathbb{R}$ and $a\in\mathbb{R}$, $U_{p_{l+1}}(x,t)$ converges to
 \begin{align*}
U_{p_{i,l+1}}(x,t)\ as\ h_i\rightarrow-\infty\ in\ \mathcal{T},\ and \ uniformly\ on \ (x,t)\in \tilde{T}^i_{A,a},{\ }i=1,\cdots,l.
       \end{align*}
\item[$\rm(iv)$] For any $h_1,\cdots,h_l,h_1^*,\cdots,h_l^*\in\mathbb{R}$,
there exists $(x_0,t_0)\in\mathbb{R}^{N+1}$,
depending on $c_1,\cdots,$ $c_l,$  $h_1,\cdots,h_l,h_1^*,\cdots,h_l^*$ such that
$$U_{p_{l+1}}(x,t)=U_{p_{l+1}^* }(x+x_0,t+t_0)\text{ for all }(x,t)\in \mathbb{R}^{N+1}.$$
Here, $p_{l+1}^*=\big( c_1,h_1^*, \nu_1,\cdots,c_l,  h_l^*, \nu_l,0\big).$
\end{description}
\end{theorem}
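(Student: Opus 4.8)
## Proof Proposal for Theorem 2.10

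\textbf{Overall strategy.} The four convergence statements all have the same flavor: as one (or more) of the shift parameters $h_i$ tends to $-\infty$, the corresponding traveling-front component (or spatially independent component) in the subsolution $\underline{u}$ and the upper estimate $\Pi$ "disappears," so that the pair $(\underline u,\min\{{\bf K},\Pi\})$ degenerates continuously to the pair built from the remaining parameters. The plan is to exploit that the entire solution $U_p$ was constructed in Theorem \ref{thm2.9} as a limit of solutions of Cauchy problems squeezed between these barriers, together with the parabolic estimates of Lemma \ref{lem2.5} and the comparison principle of Lemma \ref{lem2.4}(ii). Since $U_p$ is uniquely pinned down by the squeezing construction (and the extra hypothesis $f'(u)\le f'({\bf 0})$ guarantees the linearized comparison of Lemma \ref{lem2.6} applies, so the upper estimate is genuinely an upper bound), the continuity in the parameters $h_i$ follows from a standard limiting/compactness argument. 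I will give the details for part (i); parts (ii) and (iii) are literally the same argument with a relabeling of indices, and part (iv) is a change-of-variables observation.

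\textbf{Part (i): convergence as $h_i\to-\infty$.} Fix all parameters except $h_i$, and take a sequence $h_i^{(n)}\to-\infty$. Write $U^{(n)}:=U_{p^{(n)}}$ for the corresponding entire solutions, where $p^{(n)}$ agrees with $p_0$ except in the $h_i$-slot. By Theorem \ref{thm2.9}, each $U^{(n)}$ takes values in $[{\bf 0},{\bf K}]$ and by Lemma \ref{lem2.5} the families $U^{(n)}$ together with their first and second spatial derivatives and first time derivative are uniformly bounded and equicontinuous on $\mathbb{R}^{N+1}$ (the estimates there hold on $t>\tau+1$ uniformly in $\tau$, hence on all of $\mathbb{R}^{N+1}$ for entire solutions). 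Therefore, by Arzelà–Ascoli and a diagonal extraction, a subsequence converges in $\mathcal{T}$ (i.e.\ $C^{2,1}_{loc}$) to some entire solution $U^\infty$ of \eqref{eq1.1}. It remains to identify $U^\infty=U_{p_i}$. For this I use monotonicity: by Theorem \ref{thm2.9}(vi), $U^{(n)}$ is increasing in $h_i$, so $U^{(n)}$ is a decreasing sequence as $h_i^{(n)}\downarrow-\infty$; hence the full sequence converges (not just a subsequence), and $U^\infty$ is characterized as the infimum. On the other hand, $U^\infty$ is squeezed between the limits of the barriers: the subsolution
\[
\underline u_n(x,t)=\max\Big\{\max_{k\ne i}\chi_k\Phi_{c_k}(x\cdot\nu_k+c_kt+h_k),\ \chi_i\Phi_{c_i}(x\cdot\nu_i+c_it+h_i^{(n)}),\ \chi_{l+1}\Gamma(t+h_{l+1})\Big\}
\]
converges, uniformly on $\tilde T^i_{A,a}$, to the subsolution $\underline u_\infty$ obtained by deleting the $i$-th front (because on $\tilde T^i_{A,a}$ one has $x\cdot\nu_i+c_it+h_i^{(n)}\to-\infty$ and $\Phi_{c_i}(-\infty)={\bf 0}$), and similarly $\Pi_n\to\Pi_\infty$ with the $i$-th term deleted. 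Thus $U^\infty$ lies between the barriers defining $U_{p_i}$, and since $U_{p_i}$ is itself squeezed between the same barriers, the uniqueness afforded by the construction in Theorem \ref{thm2.9} (solutions trapped between this sub/supersolution pair, obtained as the monotone limit of Cauchy solutions started from $\underline u$ at $t=-n$) forces $U^\infty=U_{p_i}$. Upgrading $C^0_{loc}$ convergence on $\tilde T^i_{A,a}$ to uniform convergence on $\tilde T^i_{A,a}$ uses that on this region the barriers already force $\|U^{(n)}-U_{p_i}\|$ to be small outside a fixed compact set (both are trapped between barriers that are uniformly close there), so the convergence is genuinely uniform.

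\textbf{Parts (ii), (iii) and (iv); main obstacle.} Parts (ii) and (iii) are obtained by applying the argument of part (i) to the solutions $U_{p_i}$ resp.\ $U_{p_{l+1}}$ in place of $U_{p_0}$: these are again entire solutions squeezed between a sub/supersolution pair of the same type (with one fewer front), and the monotonicity in the remaining $h_j$ is inherited from Theorem \ref{thm2.9}(vi), so sending $h_j\to-\infty$ collapses another term and the limit is identified with $U_{p_{i,j}}$ (resp.\ $U_{p_{i,l+1}}$) exactly as before. For part (iv), note that $U_{p_{l+1}}$ is built purely from traveling fronts $\Phi_{c_k}(x\cdot\nu_k+c_kt+h_k)$ with no spatially independent term, and a front depends on its shift only through translation of the profile; concretely, if $(x_0,t_0)$ is chosen so that $x_0\cdot\nu_k+c_kt_0=h_k-h_k^*$ simultaneously for $k=1,\dots,l$ — which is a linear system solvable in $\mathbb{R}^{N+1}$ when, e.g., the $\nu_k$ and the "time direction" span enough of $\mathbb{R}^{N+1}$, and in general is arranged as in the scalar references \cite{Cheng,Hameln1} — then $\underline u$ and $\Pi$ for $p_{l+1}$ at $(x+x_0,t+t_0)$ coincide with those for $p_{l+1}^*$ at $(x,t)$; applying uniqueness of the squeezed entire solution once more gives $U_{p_{l+1}}(x,t)=U_{p_{l+1}^*}(x+x_0,t+t_0)$. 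The main obstacle throughout is the identification step: strictly speaking one must know that the entire solution trapped between a given sub/supersolution pair of this form is unique, or at least that the particular one produced by the construction in Theorem \ref{thm2.9} varies continuously with the parameters; this is where the hypothesis $f'(u)\le f'({\bf 0})$ and the monotonicity statement (vi) do the real work, letting us replace "a convergent subsequence of $U^{(n)}$" by "the monotone limit of $U^{(n)}$" and then pin that limit down by the sandwich. I would be careful to phrase the construction in Theorem \ref{thm2.9} so that this monotone-limit characterization is explicitly available here.
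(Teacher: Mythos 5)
There is a genuine gap at the identification step, which is the heart of parts (i)--(iii). Your argument produces (by monotonicity in $h_i$ and compactness) a limit entire solution $U^\infty$ satisfying $\underline u_\infty \le U_{p_i}\le U^\infty\le\min\{{\bf K},\Pi_\infty\}$, and you then conclude $U^\infty=U_{p_i}$ by appealing to ``the uniqueness afforded by the construction.'' But no such uniqueness is available: being trapped between this sub/supersolution pair does not pin down an entire solution (indeed the paper explicitly lists uniqueness of these entire solutions as an open problem), and nothing in your argument supplies the missing inequality $U^\infty\le U_{p_i}$. Your fallback for uniform convergence on $\tilde T^i_{A,a}$ --- that ``both are trapped between barriers that are uniformly close there'' --- is false: on $\tilde T^i_{A,a}$ only $x\cdot\nu_i\le A$ and $t\le a$ are constrained, so where the other fronts' phases $x\cdot\nu_j+c_jt+h_j$ are large the gap between $\underline u$ and $\min\{{\bf K},\Pi\}$ is of order ${\bf K}$, not small. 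You flag this obstacle yourself at the end, and say the hypothesis $f'(u)\le f'({\bf 0})$ and monotonicity ``do the real work,'' but you never actually deploy $f'(u)\le f'({\bf 0})$ in a way that closes the gap.

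The paper closes it with a quantitative estimate you are missing: set $W^n=U^n_{p_0}-U^n_{p_1}$ (differences of the approximating Cauchy solutions), note that $f'(u)\le f'({\bf 0})$ gives $\partial_t W^n\le D\Delta W^n+f'({\bf 0})W^n$, that the initial difference satisfies $W^n(x,-n)\le\Phi_{c_1}(x\cdot\nu_1-c_1n+h_1)\le v(\lambda_1(c_1))e^{\lambda_1(c_1)(x\cdot\nu_1-c_1n+h_1)}$ by Proposition \ref{Pro2.1}, and that $\widehat W(x,t)=v(\lambda_1(c_1))e^{\lambda_1(c_1)(x\cdot\nu_1+c_1t+h_1)}$ solves the linearized equation exactly; Lemma \ref{lem2.6} then yields ${\bf 0}\le U_{p_0}-U_{p_1}\le\widehat W$ on all of $\mathbb{R}^{N+1}$. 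This single explicit bound simultaneously identifies the limit, gives the uniform convergence on $\tilde T^1_{A,a}$ (since $\widehat W\le v(\lambda_1(c_1))e^{\lambda_1(c_1)(A+c_1a+h_1)}\to0$ as $h_1\to-\infty$), and upgrades to $\mathcal{T}$-convergence by the compactness argument you already have, with the subsequential limits forced to be $U_{p_1}$ by the bound. Parts (ii)--(iii) repeat this with the appropriate front or with $\Gamma$ and the bound $v^*e^{\lambda^*(t+h_{l+1})}$; your treatment of (iv) by matching shifts through a space-time translation is essentially what the paper intends.
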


\subsection{Existence of spatially independent solutions}

\noindent

In this subsection, we consider the spatially independent solutions of \eqref{eq1.1} connecting ${\bf 0}$ and ${\bf K}$, that is, solutions of the following ordinary differential problem:
\begin{eqnarray}
\frac{d \Gamma(t)}{d t}=f\big(\Gamma(t)\big),{\ }t\in\mathbb{R},\label{eq2.4}\\
\Gamma(-\infty)={\bf 0},{\ }\Gamma(+\infty)={\bf K},   \label{eq2.5}
\end{eqnarray}
where $\Gamma=(\Gamma_1,\cdots,\Gamma_m)$ and $f=(f_1,\cdots,f_m)$.
Recall that $W=[{\bf 0},{\bf K}]$.

Note that (\ref{eq2.4}) is a cooperative and irreducible system. The existence of such a heteroclinic orbit $\Gamma(t)$  can be established by using the theory of monotone
dynamical systems (see Smith \cite{Smith} and Zhao \cite{Zhao}). However, these results do not give the exponential decay rate of the solution at minus infinity. To overcome the shortcoming, we shall use the standard technique of monotone iteration scheme to prove the existence and asymptotic behavior of the solutions of (\ref{eq2.4}) and \eqref{eq2.5}.

\begin{lemma}\label{lem2.7} Let $\rm( A_0)$--$\rm( A_3)$ hold. There exists a solution  $\Gamma(t):\mathbb{R}\rightarrow W$ of \eqref{eq2.4} and \eqref{eq2.5}
such that
$$\Gamma'(t)\gg {\bf 0},{\ }\lim_{t\rightarrow-\infty}\Gamma(t)e^{-\lambda^*t}=v^*,\text{ and } \Gamma(t)\leq e^{\lambda^*t}v^*\text{ for all }t\in\mathbb{R},$$
 where $\lambda^*=M(0)$ and $v^*=v(0)$.
\end{lemma}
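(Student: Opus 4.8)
The plan is to construct $\Gamma(t)$ via a monotone iteration scheme squeezed between an explicit subsolution and the constant supersolution $\mathbf{K}$. First I would set up the integral (mild-solution) reformulation analogous to Definition \ref{def2.2} but in the spatially homogeneous setting: writing $Q(u)=f(u)+Lu$ so that $Q$ is nondecreasing on $W$, a bounded function $\Gamma$ solving \eqref{eq2.4} corresponds to a fixed point of the operator $(\mathcal{F}\Gamma)(t)=\int_{-\infty}^t e^{-L(t-s)}Q(\Gamma(s))\,ds$ acting on nondecreasing profiles $\mathbb{R}\to W$. Monotonicity of $Q$ makes $\mathcal{F}$ order-preserving, so iterating from the supersolution $\bar\Gamma\equiv\mathbf{K}$ downward produces a decreasing sequence converging pointwise to a solution, provided we have a subsolution below $\mathbf{K}$ to trap the limit away from $\mathbf{0}$.

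The key step is the choice of subsolution. Following the standard monostable recipe I would take $\underline{\Gamma}(t)=\min\{\mathbf{K},\, \rho e^{\lambda^* t} v^* \}$ componentwise, where $\lambda^*=M(0)$, $v^*=v(0)\gg\mathbf{0}$ is the principal eigenvector of $A(0)=f'(\mathbf{0})$ so that $f'(\mathbf{0})v^*=\lambda^* v^*$ (note here $A(\lambda)=D\lambda^2+f'(\mathbf{0})$ with $\lambda=0$, so the diffusion drops out — consistent with the spatially independent ansatz), and $\rho>0$ is to be fixed. On the region where $\rho e^{\lambda^* t}v^* \leq \mathbf{K}$, using $(A_3)$ (with $k=1$, $\lambda_1=0$, $\rho_1=\rho$) we get $f(\rho e^{\lambda^*t}v^*)\le f'(\mathbf{0})[\rho e^{\lambda^*t}v^*]=\rho e^{\lambda^* t}\lambda^* v^* = \frac{d}{dt}(\rho e^{\lambda^*t}v^*)$, which is exactly the subsolution inequality $\frac{d}{dt}\underline{\Gamma}\le f(\underline{\Gamma})$; on the region where some components are capped at $K_i$ one checks that $\mathbf{K}$ is itself a (sub- and super-) solution there and that the minimum of two subsolutions is a subsolution (this uses $(A_0)$, $f(\mathbf{K})=\mathbf{0}$, and cooperativity $(A_2)$ to handle the corner where different components switch). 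Then $\underline{\Gamma}(t)\le\mathbf{K}$ for all $t$, $\underline{\Gamma}(-\infty)=\mathbf{0}$, and $\underline{\Gamma}(t)\le\rho e^{\lambda^*t}v^*$ everywhere; choosing $\rho=1$ after a time shift gives the stated bound $\Gamma(t)\le e^{\lambda^*t}v^*$.

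With sub- and supersolution in hand, iterating $\mathcal{F}$ yields a solution $\Gamma$ with $\underline{\Gamma}\le\Gamma\le\mathbf{K}$, hence $\Gamma(-\infty)=\mathbf{0}$ and $\Gamma(t)\le e^{\lambda^* t}v^*$. For $\Gamma(+\infty)=\mathbf{K}$: since $\Gamma$ is trapped above a strictly increasing subsolution and the ODE $\dot u=f(u)$ is cooperative and irreducible with only the equilibria $\mathbf{0}$ and $\mathbf{K}$ between $\mathbf{0}$ and $\mathbf{K}$ (by $(A_0)$), the $\omega$-limit set must be $\{\mathbf{K}\}$ — one invokes monotone dynamical systems theory (Smith \cite{Smith}) or argues directly that $\Gamma(t)$ is eventually monotone and its limit is an equilibrium $\ge\underline{\Gamma}(T)>\mathbf{0}$. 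Monotonicity $\Gamma'(t)\gg\mathbf{0}$: differentiate the equation to get $(\Gamma')'=f'(\Gamma)\Gamma'$, a linear cooperative irreducible system; since $\Gamma'(t)\to\mathbf{0}$ as $t\to-\infty$ and $\Gamma'$ is not identically zero, the strong maximum principle / irreducibility forces $\Gamma'(t)\gg\mathbf{0}$ for all $t$ — alternatively, start the iteration from a monotone subsolution and preserve monotonicity throughout, then upgrade $\ge$ to $\gg$ by irreducibility. Finally, the sharp asymptotics $\lim_{t\to-\infty}\Gamma(t)e^{-\lambda^*t}=v^*$: the upper bound already gives $\limsup$, and for the lower bound I would construct, for any $\varepsilon\in(0,1)$, a subsolution of the form $(1-\varepsilon)e^{\lambda^*t}v^* - q\,e^{\mu t}w$ with $\mu>\lambda^*$ and suitable $w\gg\mathbf{0}$, $q>0$, using the $C^2$ regularity of $f$ near $\mathbf{0}$ in $(A_0)$ to absorb the quadratic error $f(u)-f'(\mathbf{0})u=O(\|u\|^2)$; comparing gives $\liminf_{t\to-\infty}\Gamma(t)e^{-\lambda^*t}\ge(1-\varepsilon)v^*$, and letting $\varepsilon\to0$ finishes it.

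The main obstacle I anticipate is the sharp lower asymptotic estimate: getting $\lim \Gamma(t)e^{-\lambda^*t}=v^*$ rather than just an inequality requires carefully engineering the perturbed subsolution $(1-\varepsilon)e^{\lambda^*t}v^*-q e^{\mu t}w$ so that the linear terms dominate the $O(e^{2\lambda^* t})$ nonlinearity for $t$ sufficiently negative — this is where one must exploit that $\mu$ can be chosen in $(\lambda^*, \min\{2\lambda^*, \text{next eigenvalue gap}\})$ and that $f'(\mathbf{0})-\mu I$ applied to $w$ stays in a good cone, and it is technically the most delicate piece even though the idea is standard. A secondary subtlety is verifying that $\min\{\mathbf{K}, e^{\lambda^* t}v^*\}$ is genuinely a subsolution across the (generically non-simultaneous) switching times of its components, which needs the integral formulation of Definition \ref{def2.2} together with $(A_2)$ so that the cooperative structure makes the pointwise minimum of subsolutions a subsolution.
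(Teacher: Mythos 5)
There is a genuine gap, and it is a sign error at the heart of your construction. From $\rm(A_3)$ you correctly get
$f\bigl(\min\{\mathbf{K},\rho e^{\lambda^*t}v^*\}\bigr)\le f'(\mathbf{0})\bigl[\rho e^{\lambda^*t}v^*\bigr]=\tfrac{d}{dt}\bigl(\rho e^{\lambda^*t}v^*\bigr)$,
but this is the inequality $\tfrac{d}{dt}\underline{\Gamma}\ge f(\underline{\Gamma})$ on the uncapped region, i.e.\ the \emph{supersolution} inequality for $u'=f(u)$, not the subsolution inequality you claim. In the monostable/sublinear setting $\min\{\mathbf{K},e^{\lambda^*t}v^*\}$ lies \emph{above} the solution; indeed this is exactly the function the paper uses as its supersolution $\overline{\phi}$. (Relatedly, in a cooperative system it is the \emph{maximum} of subsolutions and the \emph{minimum} of supersolutions that inherit the respective property, so "the minimum of two subsolutions is a subsolution" is also the wrong way around.) The error propagates into your iteration scheme: if you iterate the integral operator $\mathcal{F}$ downward starting from $\bar\Gamma\equiv\mathbf{K}$, nothing happens, because $\mathcal{F}(\mathbf{K})=\int_{-\infty}^t e^{-L(t-s)}\bigl(f(\mathbf{K})+L\mathbf{K}\bigr)ds=\mathbf{K}$, so the decreasing sequence is identically $\mathbf{K}$ and can never produce a profile with $\Gamma(-\infty)=\mathbf{0}$, nor the upper bound $\Gamma(t)\le e^{\lambda^*t}v^*$. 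Trapping from below by a function that is in fact a supersolution does not rescue this.

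The repair is essentially to swap the roles, which is what the paper does: take $\overline{\phi}_i(t)=\min\{K_i,v_i^*e^{\lambda^*t}\}$ as the supersolution (it decays to $\mathbf{0}$ at $-\infty$, which is what forces the heteroclinic behaviour and the bound $\Gamma\le e^{\lambda^*t}v^*$), and take as subsolution $\underline{\phi}_i(t)=\max\{0,\,v_i^*e^{\lambda^*t}-q\,v_i^*e^{\varepsilon\lambda^*t}\}$ with $\varepsilon\in(1,2)$ and $q$ large — precisely the perturbed exponential you only introduce at the very end for the asymptotics. Once the solution is squeezed between these two, the limit $\lim_{t\to-\infty}\Gamma(t)e^{-\lambda^*t}=v^*$ falls out immediately because both bounds have the same leading term $v^*e^{\lambda^*t}$ (no $(1-\varepsilon)$ factor or limiting argument is needed), $\Gamma(+\infty)=\mathbf{K}$ follows as you indicate from $\rm(A_0)$ and monotonicity, and your argument for $\Gamma'(t)\gg\mathbf{0}$ via cooperativity (the differential inequality $\Gamma_i''\ge m_0\Gamma_i'$ and a contradiction with $\Gamma(-\infty)=\mathbf{0}$) matches the paper's. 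So the ingredients are all present in your write-up, but as stated the main comparison is inverted and the iteration from $\mathbf{K}$ would not converge to the desired solution.
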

\begin{proof} Since the method is standard, we only sketch the outline.
 Let $C(\mathbb{R},\mathbb{R}^m)$ be the spaces of continuous vector-valued functions on $\mathbb{R}$. Define the
operator $F=(F_1,\cdots,F_m):C(\mathbb{R},W)\rightarrow C(\mathbb{R},\mathbb{R}^m)$ by
\[
F_i(u)(t)=  \int_{-\infty}^{t}e^{-L(t-s)}Q_i(u)(s) ds,{\ }i=1,\cdots,m.
\]
Recall that
$$L=\max_{i=1,\cdots,m}\{|\partial_if_i(u)|\big|u\in W\}\text{ and }Q_i(u)(t)= f_i(u(t))+L u_i(t),{\ }i=1,\cdots,m.$$ It is easy to verify that
each $Q_i(\cdot)$ is a nondecreasing map form $C(\mathbb{R},W)$ to $C(\mathbb{R},\mathbb{R})$ with respect to the point-wise ordering.
The remainder of the proof is divided into the following there steps.\medskip

\noindent{\it Step  1.}
The following observation is straightforward.
\begin{enumerate}
\item[{\rm(i)}] $F:C(\mathbb{R},W)\rightarrow C(\mathbb{R},W)$;
\item[{\rm(ii)}] $F(\phi)(t)\geq F(\psi)(t)$ for $\phi,\psi\in C(\mathbb{R},W)$ with $\phi(t)\geq \psi(t)$;
\item[{\rm(iii)}]  $F(\phi)(t)$ is increasing in $\mathbb{R}$ for $\phi\in C(\mathbb{R},W)$ with $\phi(t)$ is increasing in $\mathbb{R}$.
\end{enumerate}
\noindent{\it Step 2.}
For any fixed $\varepsilon\in\big(1,2\big)$ and sufficiently large $q>1$, define two functions as follows:
\[
\overline{\phi}(t)=\big(\overline{\phi}_1(t),\cdots, \overline{\phi}_m(t)\big)\text{ and }\underline{\phi}(t)=\big(\underline{\phi}_1(t),\cdots,\underline{\phi}_m(t)\big),
\]
where
$$ \overline{\phi}_i(t)=\min\left\{ K_i,v^*_ie^{\lambda^*t}\right\}\text{ and }\underline{\phi}_i(t)=\max\left\{0, v^*_ie^{\lambda^*t}-qv^*_ie^{\varepsilon\lambda^*t}\right\},{\
}t\in\mathbb{R}.$$
Then, by direct computations, we obtain
\begin{center}
${\bf 0}\leq \underline{\phi}(t)\leq\overline{\phi}(t)\leq {\bf K}$,
$F(\overline{\phi})(t)\leq \overline{\phi}(t)$ and $F(\underline{\phi})(t)\geq\underline{\phi}(t)$ for all $t\in\mathbb{R}$.\end{center}
\noindent {\it Step 3.}
Using the monotone iteration technique, we can show that equation \eqref{eq2.4} admits a solution $\Gamma(t)$ which satisfies
\begin{center}
$\Gamma'(t)\geq{\bf 0}$\  \ and\ \
$
\underline{\phi}(t)\leq \Gamma(t)\leq    \overline{\phi}(t)\text{ for all }   t\in\mathbb{R}.
$
\end{center}
Thus, $\Gamma(-\infty)={\bf 0} $, $ \Gamma(+\infty)\in({\bf 0},{\bf K}]$ and
$$ \lim\limits_{t\rightarrow-\infty}\Gamma(t)e^{-\lambda^*t}=v^*,{\ }{\bf 0}\ll\Gamma(t)\leq e^{\lambda^*t}v^*\text{ for all }t\in\mathbb{R}.$$
Moreover, one can easily verify that $\Gamma(+\infty)={\bf K} $ for all $t\in\mathbb{R}$.

Next, we show that $\Gamma'(t)\gg{\bf 0}$ for all $t\in\mathbb{R}$. Since $\partial_jf_i (u)\geq0$ for all $u\in [{\bf 0},{\bf K}]$ and $1\leq j\neq i\leq m$, by (\ref{eq2.4}), we have
\begin{eqnarray*}
\Gamma_i''(t)&=&\partial_1f_i\big(\Gamma(t)\big)\Gamma_1'(t)+\cdots+\partial_mf_i\big(\Gamma(t)\big)\Gamma_m'(t)\\
  &\geq &\partial_if_i\big(\Gamma(t)\big)\Gamma_i'(t)\\
  &\geq& m_0 \Gamma_i'(t),{\ }\forall t\in\mathbb{R},
\end{eqnarray*}
where $m_0=\min\limits_{i=1,\cdots,m}\big\{\partial_if_i(u)\big|u\in W \big\}$.
Thus, for any $\tau\in\mathbb{R}$, we obtain
\begin{equation}
 \Gamma'_i(t)\geq  \Gamma'_i(\tau)e^{m_0(t-\tau)},{\ }\forall t>\tau,{\ }i=1,\cdots,m.
   \label{eq2.6}
\end{equation}
Suppose for the contrary that
there exist $i_0\in\{1,\cdots,m\}$ and $t_0\in\mathbb{R}$ such that $ \Gamma_{i_0}'(t_0)=0$,
it then follows from (\ref{eq2.6}) that $\Gamma_{i_0}'(\tau)=0$ for all $\tau<t_0$. Thus, $\Gamma_{i_0}(\tau)=\Gamma_{i_0}(t_0)$ for all $\tau\leq t_0$ and hence
$0<\Gamma_{i_0}(t_0)= \Gamma_{i_0}(-\infty)=0 $. This contradiction shows that $\Gamma'(t)\gg {\bf 0}$ for all $t\in\mathbb{R}$. The proof is complete.
\end{proof}\medskip

\subsection{Proofs of Theorems \ref{thm2.9} and \ref{thm2.10} }

\noindent

In this subsection, we will use the results of previous subsections to obtain an appropriate upper estimate for solutions of \eqref{eq1.1} and then prove Theorems \ref{thm2.9} and \ref{thm2.10}.

For any $l,n\in {\mathbb Z}^+$, $\nu_1,\cdots,\nu_l\in\mathbb{R}^N$ with $\|\nu_i\|=1$, $h_1,\cdots,h_{l+1}\in\mathbb{R}$, $c_1,\cdots,c_l>c_*$, and $\chi_1,\cdots,\chi_{l+1} \in\{0,1\}$ with $\chi_1+\cdots+\chi_{l+1}\geq2$, we denote
 \begin{eqnarray*}
&&\varphi^n(x):= \max\left\{\max_{i=1,\cdots,l}\chi_i\Phi_{c_i} \big(x\cdot\nu_i-c_in+h_i\big),\chi_{l+1} \Gamma(-n+h_{l+1})\right\},\\
&&\underline{u} (x,t):=\max\Big\{\max_{i=1,\cdots,l}\chi_i\Phi_{c_i} \big(x\cdot\nu_i+c_it+h_i\big),\chi_{l+1} \Gamma(t+h_{l+1})\Big\},{\ }t\geq-n.
\end{eqnarray*}
Let $U^n(x,t)=\big(U_1^n(x,t),\cdots,U_m^n(x,t)\big)$
be the unique solution of the following initial value problem of \eqref{eq1.1}
\begin{align*}
    \left\{
       \begin{array}{ll}
       u_t=D\Delta u+f(u),{\ }x\in\mathbb{R}^N,t>-n,\\
      u(x,-n)=\varphi^n(x),{\ }x\in\mathbb{R}^N.
 \end{array}
    \right.
\end{align*}
 Then, by Lemma \ref{lem2.4}, we have
$$\underline{u} (x,t) \leq U^n(x,t) \leq {\bf K}
\text{ for all }x\in \mathbb{R}^N\text{ and }t\geq-n.$$
The following result provides the appropriate upper estimate of $U^n(x,t)$.
\begin{lemma}\label{lem2.11} Assume $\rm( A_0)$--$\rm( A_3)$.
The function $U^n(x,t)$ satisfies
\[U^n(x,t)  \leq \min\big\{ {\bf K},\Pi(x,t)\big\}\text{ for all }x\in \mathbb{R}^N \text{ and }t\geq-n,\]
where $\Pi(x,t)$ is defined in Theorem \ref{thm2.10}.
\end{lemma}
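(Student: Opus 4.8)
\textbf{Proof plan for Lemma \ref{lem2.11}.} The idea is to verify that the function $\min\{{\bf K},\Pi(x,t)\}$ is a supersolution of \eqref{eq1.1} on $[-n,+\infty)$ lying above the initial datum $\varphi^n$, and then invoke Lemma \ref{lem2.4}(ii). Since the minimum of two supersolutions is again a supersolution, it suffices to treat $\Pi(x,t)$ itself and the constant ${\bf K}$ separately; the constant ${\bf K}$ is trivially a supersolution by $\rm(A_0)$ and Remark \ref{re2.3}. So the core of the argument is to show that $\Pi(x,t)$ is a supersolution, i.e. (in the integral sense of Definition \ref{def2.2}, or equivalently via Remark \ref{re2.3} since $\Pi$ is smooth) that
\[
\partial_t\Pi \geq D\Delta\Pi + f\big(\min\{{\bf K},\Pi\}\big)\quad\text{on }\mathbb{R}^N\times(-n,+\infty).
\]
First I would note that each summand of $\Pi$ is an exponential of the form $\rho\, v(\lambda) e^{\lambda(x\cdot\nu + c t + h)}$ with $\lambda\in\{\lambda_1(c_i)\}\cup\{\lambda^*\}\subset[0,\lambda^*]$ (here $\lambda^* = M(0) \le \lambda_*$ by the definition of $c_*,\lambda_*$, so all these exponents lie in $[0,\lambda^*]$), and that such a summand $E(x,t)$ satisfies the \emph{linear} equation $\partial_t E = D\Delta E + f'({\bf 0})E$ exactly, because $c_i\lambda_1(c_i) = M(\lambda_1(c_i))$ and $A(\lambda)v(\lambda) = M(\lambda)v(\lambda)$ — this is precisely the characteristic relation recorded after $\rm(A_1)$ and used in Proposition \ref{Pro2.1}. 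Likewise the spatially homogeneous exponential $\chi_{l+1}v^* e^{\lambda^*(t+h_{l+1})}$ solves $\partial_t E = f'({\bf 0})E = D\Delta E + f'({\bf 0})E$ since $M(0)v(0) = f'({\bf 0})v(0)$. Summing, $\Pi$ solves the linear system $\partial_t\Pi = D\Delta\Pi + f'({\bf 0})\Pi$ exactly.

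Consequently, proving that $\Pi$ is a supersolution of the nonlinear system reduces to the pointwise inequality
\[
f'({\bf 0})\Pi(x,t) \;\geq\; f\big(\min\{{\bf K},\Pi(x,t)\}\big),\qquad \forall (x,t),
\]
and here is where assumption $\rm(A_3)$ enters. At each point $(x,t)$, write $\Pi(x,t) = \sum_{j} \rho_j v(\lambda_j)$ — a finite sum (at most $l+1$ terms, after dropping the vanishing $\chi_i=0$ terms) of positive multiples of eigenvectors $v(\lambda_j)$ with $\lambda_j\in[0,\lambda^*]$, where $\rho_j = \rho_j(x,t) > 0$ for those $j$ with $\chi_j = 1$. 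Then $\min\{{\bf K},\Pi(x,t)\} = \min\{{\bf K}, \sum_j \rho_j v(\lambda_j)\}$, and $\rm(A_3)$ gives exactly
\[
f\big(\min\{{\bf K},\textstyle\sum_j\rho_j v(\lambda_j)\}\big) \;\leq\; f'({\bf 0})\big[\textstyle\sum_j\rho_j v(\lambda_j)\big] \;=\; f'({\bf 0})\Pi(x,t).
\]
Combining this with the linear identity from the previous paragraph yields $\partial_t\Pi - D\Delta\Pi - f(\min\{{\bf K},\Pi\}) \ge {\bf 0}$, so $\min\{{\bf K},\Pi\}$ is a supersolution of \eqref{eq1.1} by Remark \ref{re2.3} (applied to each of ${\bf K}$ and $\Pi$, then taking the minimum).

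It remains to check the initial inequality $\varphi^n(x) \le \min\{{\bf K},\Pi(x,-n)\}$. Since $\varphi^n(x) = \max\{\max_i \chi_i\Phi_{c_i}(x\cdot\nu_i - c_i n + h_i),\, \chi_{l+1}\Gamma(-n+h_{l+1})\} \le {\bf K}$ already, I only need $\varphi^n(x) \le \Pi(x,-n)$. For each $i\le l$, Proposition \ref{Pro2.1} gives $\Phi_{c_i}(\xi) \le v(\lambda_1(c_i)) e^{\lambda_1(c_i)\xi}$ for all $\xi$, hence $\chi_i\Phi_{c_i}(x\cdot\nu_i - c_i n + h_i) \le \chi_i v(\lambda_1(c_i)) e^{\lambda_1(c_i)(x\cdot\nu_i - c_i n + h_i)} \le \Pi(x,-n)$, since $\Pi$ is a sum of nonnegative terms one of which is exactly this bound; and for the last term, Lemma \ref{lem2.7} gives $\Gamma(t) \le e^{\lambda^* t} v^*$, so $\chi_{l+1}\Gamma(-n+h_{l+1}) \le \chi_{l+1} v^* e^{\lambda^*(-n+h_{l+1})} \le \Pi(x,-n)$. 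Since each term in the max is $\le \Pi(x,-n)$ componentwise, so is their maximum. With the initial ordering verified and $\min\{{\bf K},\Pi\}$ shown to be a supersolution, Lemma \ref{lem2.4}(ii) gives $U^n(x,t) \le \min\{{\bf K},\Pi(x,t)\}$ for all $x\in\mathbb{R}^N$ and $t\ge -n$, as claimed. The only delicate points are bookkeeping ones — confirming that every exponent appearing is genuinely in $[0,\lambda^*]$ so that $\rm(A_3)$ applies, and correctly handling the vanishing $\chi_i = 0$ terms in both $\varphi^n$ and $\Pi$ — rather than any real analytic obstacle, the nonlinear content being entirely absorbed by $\rm(A_3)$. $\Box$
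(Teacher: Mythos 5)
Your proposal follows essentially the same route as the paper: initial ordering from the exponential bounds in Proposition \ref{Pro2.1} and Lemma \ref{lem2.7}, the exact linear identity $\Pi_t=D\Delta\Pi+f'({\bf0})\Pi$ coming from $A(\lambda)v(\lambda)=M(\lambda)v(\lambda)$, assumption $\rm(A_3)$ to absorb the nonlinearity via $f(\min\{{\bf K},\Pi\})\leq f'({\bf0})\Pi$, and the comparison principle of Lemma \ref{lem2.4}(ii). Two technical caveats. First, $\min\{{\bf K},\Pi\}$ is only Lipschitz where the minimum switches branches, so Remark \ref{re2.3} (which requires $C^1$ in $t$, $C^2$ in $x$) cannot be applied to it directly; the clean way to implement your ``minimum of supersolutions'' step is the integral inequality of Definition \ref{def2.2}, using that $Q=f+L\,\mathrm{id}$ is non-decreasing and $T(t)$ is order-preserving, which is exactly how the paper verifies \eqref{eq2.010} by showing the right-hand side is bounded above both by ${\bf K}$ and by $\Pi$. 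Second, your parenthetical claim that $\lambda^*=M(0)\leq\lambda_*$ is false in general (already in the scalar case $M(\lambda)=d\lambda^2+r$ one has $M(0)=r$ and $\lambda_*=\sqrt{r/d}$, and $r\leq\sqrt{r/d}$ fails when $rd>1$), so it cannot be used to place the exponents $\lambda_1(c_i)$ in $[0,M(0)]$. What is actually true, and all that is needed, is $\lambda_1(c_i)\in(0,\lambda_*)$ together with $\lambda=0$; the range appearing in $\rm(A_3)$ is to be read as $[0,\lambda_*]$ (the paper's notation conflates $\lambda^*$ and $\lambda_*$ here), under which reading no comparison between $M(0)$ and $\lambda_*$ is required and your application of $\rm(A_3)$ is exactly the paper's. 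With these two repairs your argument coincides with the published proof.
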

\begin{proof} Let $v^+(x,t)=\min\big\{ {\bf K},\Pi(x,t)\big\}.$ From Proposition \ref{Pro2.1} and Lemma \ref{lem2.7}, we have
\begin{eqnarray*}
 v^+(x,-n)&= &\min\big\{ {\bf K},\Pi(x,-n)\big\}\\
 &= &\min\Big\{ {\bf K},\sum_{i=1}^l \chi_iv(\lambda_1(c_i))
e^{\lambda_1(c_i)(x\cdot\nu_i-c_in+h_i)}+\chi _{l+1} v^* e^{\lambda^*(-n+h_{l+1})}\Big\}\\
&\geq &\varphi^n(x)=U^n(x,-n) ,{\ }\forall x\in\mathbb{R}^N.
\end{eqnarray*}
By Lemma \ref{lem2.4}(ii), it is sufficient to show that $v^+(x,t)$ is a supersolution of (\ref{eq1.1}) on $[-n,+\infty)$, that is,
\begin{equation}\label{eq2.010}
v^+(x,t)\geq T(t+n)v^+(x,-n)+ \int_{-n}^tT(t-s)Q(v^+(x,s))ds,\ \forall x\in\mathbb{R}^N,t>-n.
\end{equation}
Note that $Q(u)=f(u)+Lu$ is non-decreasing in $ u$ for $ {\bf 0}\leq u\leq{\bf K}$.
For any $x\in\mathbb{R}^N,t>-n$, we have
\begin{eqnarray*}
&&T_i(t+n)v^+_i(x,-n)+ \int_{-n}^tT_i(t-s)Q_i(v^+(x,s))ds\\
&&\leq T_i(t+n)K_i+ \int_{-n}^tT_i(t-s)Q_i({\bf K})ds\\
&&\leq e^{-L (t+n)}K_i+ \int_{-n}^te^{-L (t-s)} L K_ids=K_i.
\end{eqnarray*}
Consequently,
\begin{equation}\label{eq2.10}
T(t+n)v^+(x,-n)+ \int_{-n}^tT(t-s)Q(v^+(x,s))ds\leq {\bf K},{\  }\forall x\in\mathbb{R}^N,t>-n.
\end{equation}
Note also that $A(0)v^* =\lambda^* v^* $ and
$$A(\lambda_1(c_i)) v(\lambda_1(c_i))=M(\lambda_1(c_i))v(\lambda_1(c_i))=  c_i\lambda_1(c_i)
v(\lambda_1(c_i)),{\ }i=1,\cdots,l.$$
It is easy to see that the function $\Pi(x,t)$ satisfies the linear equation:
\[
\Pi_t= D\Delta \Pi+f'({\bf0})\Pi(x,t).
\]
Then, for any $x\in\mathbb{R}^N,t>-n$,  $\Pi(x,t)$ satisfies the integral equation:
\[
\Pi(x,t)= T(t+n)\Pi(x,-n)+ \int_{-n}^tT(t-s)  \big[f'({\bf0})\Pi(x,s)+L \Pi(x,s)\big]ds.
\]
By the assumption $\rm(A_3)$, we obtain
\begin{eqnarray*}
Q(v^+(x,t))&=&f(v^+(x,t))+Lv^+(x,t)\\
&\leq& f'({\bf0})\Pi(x,t)+Lv^+(x,t)\leq f'({\bf0}) \Pi(x,t) +L\Pi(x,t),
\end{eqnarray*}
and hence
\begin{eqnarray}
&&T(t+n)v^+(x,-n)+ \int_{-n}^tT(t-s)Q(v^+(x,s))ds   \nonumber\\
&&\leq T(t+n) \Pi(x,-n)+ \int_{-n}^tT(t-s)\big[f'({\bf0}) \Pi(x,s)+L \Pi(x,s) \big]ds  \nonumber\\
&&= \Pi(x,t).  \label{eq2.11}
\end{eqnarray}
Combining \eqref{eq2.10} and \eqref{eq2.11},  \eqref{eq2.010} holds and the assertion follows from Lemma \ref{lem2.4}.
 This completes the proof.
\end{proof}
\begin{remark} {\rm
We note that if  $f(u)\leq f'({\bf0})u$ for $u\in [{\bf 0},{\bf K}]$, then Lemma \ref{lem2.11} is a direct consequence of Lemma \ref{lem2.6}. In fact, by $f(u)\leq f'({\bf0})u$ for $u\in [{\bf 0},{\bf K}]$, we have
\[
  U^n_t\leq D\Delta U^n+f'(0)U^n,{\ }\forall x\in\mathbb{R}^N,t>-n.
\]
Noting that $U^n(x,-n)= \varphi^n(x)\leq \Pi(x,-n)$ for all $x\in\mathbb{R}^N$ and
\[
\Pi_t= D\Delta \Pi+f'({\bf0})\Pi(x,t),{\ }\forall x\in\mathbb{R}^N,t>-n.
\]
It follows from Lemma \ref{lem2.6} that $U^n(x,t)  \leq \Pi(x,t)$ and hence $U^n(x,t)  \leq \min\big\{ {\bf K},\Pi(x,t)\big\}$ for all  $x\in \mathbb{R}^N \text{ and }t\geq-n$.}
\end{remark}

Now we give the proofs of Theorem \ref{thm2.9}  and \ref{thm2.10}.\medskip

\noindent {\bf Proof of Theorem \ref{thm2.9}}. By Lemmas \ref{lem2.4} and \ref{lem2.11}, we have
\[
\underline{u} (x,t) \leq
U ^n(x,t)  \leq U^{n+1}(x,t) \leq \min\big\{ {\bf K},\Pi(x,t)\big\}
\]
for all $x\in \mathbb{R}^N$ and $t\geq-n$.
Using the priori estimate of Lemma \ref{lem2.5} and the diagonal extraction process, there exists a subsequence
$\{U ^{n_k}(x,t) \}_{k\in \mathbb{N}}$ of $\{U ^{n}(x,t) \}_{n\in \mathbb{N}}$ such that $U^{n_k}(x,t)$
converges to a function $U_p(x,t)=\big(U _{1;p}(x,t),\cdots,U_{m;p} (x,t)\big)$
in the sense of topology $\mathcal{T}$. Since  $U ^n(x,t)  \leq U^{n+1}(x,t) $ for any $t>-n$,  we have
\begin{center}
$\lim\limits_{n\rightarrow +\infty} U ^n(x,t) =U_p(x,t)$ for any $(x,t)\in\mathbb{R}^{N+1}$.
\end{center}
The limit function is unique, whence all of the functions $U ^n(x,t) $ converge to the function $U_p(x,t)$ in the sense of topology $\mathcal{T}$ as $n\rightarrow+\infty$.
Clearly, $U_p(x,t)$ is an entire solution of \eqref{eq1.1} satisfying \eqref{eq2.7}.

The assertions for parts (ii)-(iii) and (vi)-(viii) are direct consequences of  \eqref{eq2.7}. Therefore, we only prove the results of parts (i), (iv) and (v).

(i) Clearly, $U _p(x,t)\gg0$ for all $(x,t)\in\mathbb{R}^{N+1}$. Since
$$
U ^n(x,t)
\geq \underline{u} (x,t)\geq \underline{u} (x,-n)= \varphi  ^n(x)=U^n(x,-n)
$$
for all $(x,t)\in\mathbb{R}^{N}\times[-n,+\infty)$, by Lemma \ref{lem2.4}, we have $\frac{\partial}{\partial t}U^n(x,t)\geq 0$
for $(x,t)\in\mathbb{R}^{N}\times(-n,+\infty)$. This yields $\frac{\partial}{\partial t}U_p(x,t)\geq 0$
for all $(x,t)\in\mathbb{R}^{N+1}$. Noting that
\begin{eqnarray*}
\frac{\partial^2 U_{i;p}}{\partial t^2}&=&d_i\Delta (U_{i;p})_t+\partial_1f_i\big(U_p\big)(U_{1;p})_t+\cdots+ \partial_mf_i\big(U_p\big)(U_{m;p})_t\\
&\geq &d_i\Delta (U_{i;p})_t+\partial_if_i\big(U_p\big)(U_{i;p})_t\\
 &\geq &d_i\Delta (U_{i;p})_t+m_0(U_{i;p})_t ,{\ }
 i=1,\cdots,m,
\end{eqnarray*}
where
$m_0=\min\limits_{i=1,\cdots,m,u\in W}\partial_if_i(u)$,
we obtain for any $\tau\in\mathbb{R}$,
\begin{equation}\label{eq2.12}
            (U_{i;p})_t(x,t)\geq e^{m_0(t-\tau)}\int_{\mathbb{R}^N} \Psi_i(x-y,t-\tau)(U_{i;p})_t(y,\tau)dy\geq0,{\ }\forall x\in\mathbb{R}^N,t>\tau.
 \end{equation}
Assume, by contradiction, that there exist $i_0\in\{1,\cdots,m\}$ and $(x_0 ,t_0)\in\mathbb{R}^{N+1}$ such that $(U_{i_0;p})_t(x_0 ,t_0)=0$,
it then follows from (\ref{eq2.12}) that $(U_{i_0;p})_t(x_0 ,\tau)=0$ for all $\tau\leq t_0$.
Hence $U_{i_0;p}(x_0 ,t)=U_{i_0;p}(x_0 ,t_0)$ for all $t\leq t_0$, which implies that
$\lim_{t\rightarrow-\infty}U_{i_0;p}(x_0 ,t)=U_{i_0;p}(x_0 ,t_0)$.
But following from (\ref{eq2.7}),
$$U_{i_0;p}(x_0 ,t_0)>0\text{ and }\lim_{t\rightarrow-\infty}U_{i_0;p}(x_0 ,t)=0.$$
This contradiction yields that $\frac{\partial}{\partial t}U_p(x,t)\gg0$ for all $(x,t)\in\mathbb{R}^{N+1}$. \medskip

Next, we show that $U_p(x,t)\ll {\bf K}$ for all  $(x,t)\in\mathbb{R}^{N+1}$. Let $V(x,t)= {\bf K}- U_p(x,t)$, then $ {\bf 0}\leq V(x,t) \leq {\bf K}$ and $V_t(x,t)\ll {\bf 0}$
for all $(x,t)\in\mathbb{R}^{N+1}$ and
\begin{equation}
V_t(x,t)=D \Delta V(x,t)-f({\bf K}- V(x,t)).
\label{eq2.13}
\end{equation}
We claim that $V(x,t)\gg {\bf 0}$ for all  $(x,t)\in\mathbb{R}^{N+1}$. If this is not true, then there exist  $i_0\in\{1,\cdots,m\}$ and $(x_0 ,t_0)\in\mathbb{R}^{N+1}$ such that $ V_{i_0}(x_0 ,t_0)=0$, and hence $\Delta  V_{i_0}(x_0 ,t_0) \geq 0$. It follows from (\ref{eq2.13}) that
\begin{eqnarray*}
0&\leq& d_ {i_0}\Delta  V_{i_0}(x_0 ,t_0) \\
&<& f_{i_0}({\bf K}- V(x_0 ,t_0))\\
& = &f_{i_0}( K_1- V_1(x_0 ,t_0),\cdots, K_{i_0-1}- V_{i_0-1}(x_0 ,t_0),K_{i_0} ,K_{i_0+1}- V_{i_0+1}(x_0 ,t_0),\cdots, K_m )\\
&\leq& f_{i_0}({\bf K})=0,
\end{eqnarray*}
which is a contradiction. Thus $V(x,t)\gg {\bf 0}$ and hence  $U_p(x,t)\ll {\bf K}$ for all  $(x,t)\in\mathbb{R}^{N+1}$.

(iv) When $\chi_{l+1} =1$, by \eqref{eq2.7}, we have
\begin{eqnarray*}
&&\max\left\{\max_{i=1,\cdots,l}\chi_i\Phi_{c_i} \big(x\cdot\nu_i+c_it+h_i\big), \Gamma(t+h_{l+1})\right\} \nonumber\\
&&\leq U_p(x,t)\leq\sum_{i=1}^l \chi_iv(\lambda_1(c_i))
e^{\lambda_1(c_i)(x\cdot\nu_i+c_it+h_i)}+ v^* e^{\lambda^*(t+h_{l+1})}.
\end{eqnarray*}
Noting that
$$\lim_{t\rightarrow-\infty}\Gamma(t)e^{-\lambda^*t}=v^*\text{ and }\lim_{\xi\rightarrow-\infty}\Phi_{c_i}(\xi)e^{-\lambda_1(c_i)\xi}=v(\lambda_1(c_i)),{\ } i=1,\cdots,l,
$$
it suffices to show that $ c\lambda_{1}(c)\geq\lambda^*$ for any $c>c_*$. In fact, since $A(\lambda)\geq A(0)$ for any $\lambda\geq0$, $M(\lambda)\geq M(0)=\lambda^*$ (see, e.g., \cite[Corrollary 4.3.2]{Smith}). In view of $M( \lambda_{1}(c))= c\lambda_{1}(c)$ and $\lambda_{1}(c)>0$ for any $c>c_*$, we obtain $ c\lambda_{1}(c)\geq\lambda^*$ for any $c>c_*$ and the assertion follows. The proof of part (v) is similar to that of part (iv) and omitted. This completes the proof of Theorem \ref{thm2.9}. \medskip

\noindent {\bf Proof of Theorem \ref{thm2.10}}. (i) We only  prove the case that $U_{p_0}(t)$ converges to $U_{p_1}(t)$ in the sense of topology $\mathcal{T}$ as $h_1\rightarrow-\infty$, and uniformly on
$(x,t)\in \widetilde{T}^1_{A,a}.$ The proofs for the other cases are similar.

For $(\chi_1,\cdots,\chi_{l+1} )=(1,\cdots,1)$, we denote $\varphi^n(x)$ by $\varphi^n_{p_0}(x)$  and $U ^n(x,t) $ by $U ^n_{p_0}(x,t) $, respectively. Similarly, when $(\chi_1,\cdots,\chi_{l+1} )=(0,1,\cdots,1)$,
we denote $\varphi^n(x)$ by $\varphi^n_{p_1}(x)$  and $U ^n(x,t) $ by $U ^n_{p_1}(x,t) $, respectively.
Let
$$W ^n(x,t)=U ^n_{p_0}(x,t)-U ^n_{p_1}(x,t), {\ }(x,t)\in\mathbb{R}^{N}\times(-n,+\infty),$$
 then
 ${\bf 0}\leq W ^n(x, t)\leq {\bf K}$ for all $(x,t)\in\mathbb{R}^{N}\times(-n,+\infty)$.
In view of $f'(u)\leq f'({\bf0})$ for all $u\in [{\bf 0},{\bf K}]$, we get
\begin{eqnarray*}
\frac{\partial  W ^n}{\partial t}&  =&D\Delta  W ^n +  f(U ^n_{p_0}(x,t))-f(U ^n_{p_1}(x,t))\\
&  =&D\Delta  W ^n +  f'\big(U ^n_{p_0}(x,t)+(1-\theta_3)W ^n(x,t)\big)W ^n(x,t)\\
&\leq& D\Delta  W ^n+f'({\bf 0})W ^n(x,t),{\ }\forall x\in \mathbb{R}^N,t>-n,
\end{eqnarray*}
where $\theta_3\in(0,1)$.
Define the function
$$\widehat{W}(x,t)=v(\lambda_1(c_1))
e^{\lambda_1(c_1)(x\cdot\nu_1+c_1t+h_1)},{\ }(x,t)\in\mathbb{R}^{N+1}.$$
Since
$$A(\lambda_1(c_1)) v(\lambda_1(c_1))=M(\lambda_1(c_1))v(\lambda_1(c_1))=  c_1\lambda_1(c_1)
v(\lambda_1(c_1)),$$
direct computations show that
\begin{align*}
\frac{\partial  \widehat{W} }{\partial t}= D\Delta  \widehat{W} +f'({\bf 0})\widehat{W}(x,t),{\ }\forall x\in \mathbb{R}^N,t\in\mathbb{R}.
\end{align*}
Moreover, by Proposition \ref{Pro2.1}, we have
\begin{eqnarray*}
W ^n(x,-n)&=&U ^n_{p_0}(x,-n)-U ^n_{p_1}(x,-n)\\
&\leq &\Phi_{c_1} \big(x\cdot\nu_1-c_1n+h_1\big)\\
&\leq& v(\lambda_1(c_1))
e^{\lambda_1(c_1)(x\cdot\nu_1-c_1n+h_1)}=\widehat{W}(x,-n).
\end{eqnarray*}
It then follows from Lemma \ref{lem2.6} that
\begin{center}
$0\leq  W ^n(x,t)=U ^n_{p_0}(x,t)-U ^n_{p_1}(x,t) \leq \widehat{W}(x,t)=v(\lambda_1(c_1))
e^{\lambda_1(c_1)(x\cdot\nu_1+c_1t+h_1)}$
\end{center}
for all $(x,t)\in\mathbb{R}^{N}\times[-n,+\infty)$.
Since $\lim\limits_{n\rightarrow+\infty} U ^n_{p_i}(x,t)= U_{p_i}(x,t)$, $i=0,1$, we get
\[
{\bf0}\leq U_{p_0}(x,t)-U_{p_1}(x,t)\leq v(\lambda_1(c_1))
e^{\lambda_1(c_1)(x\cdot\nu_1+c_1t+h_1)} \text{ for all }(x,t)\in\mathbb{R}^{N+1},
\]
which implies that $U_{p_0}(x,t)$ converges to $U_{p_1}(x,t)$ as $h_1\rightarrow-\infty$ uniformly on
$(x,t)\in \widetilde{T}_{A,a}^1$ for any $A,a\in\mathbb{R}$. For any sequence $h_1^\ell$ with $h_1^\ell\rightarrow-\infty$
as $\ell\rightarrow+\infty$, the functions $U_{p^\ell_0}(x,t)$, $p^\ell_0:=( c_1,h_1^\ell, \nu_1,\cdots,c_l,  h_l, \nu_l,  h_{l+1})$, converge to a solution of \eqref{eq1.1} (up to extraction of
some subsequence) in the sense of topology $\mathcal{T}$, which turns out to be $U_{p_1}(x,t)$. The limit does not depend on the sequence $h_1^{\ell}$,
whence all of the functions $U_{p_0}(x,t)$ converge to $U_{p_1}(x,t)$ in the sense of topology $\mathcal{T}$ as $h_1\rightarrow-\infty$, and the assertion of this part follows.

The proofs of parts (ii)-(iii) are similar to that of part (i), and omitted. Moreover, the proof of part (iv) is straightforward. This completes the proof of Theorem \ref{thm2.10}.

\section{Entire solutions for non-cooperative systems}

\noindent

In this section, we consider the entire solutions of \eqref{eq1.1} with monostable and non-cooperative nonlinearity. We introduce two
auxiliary cooperative reaction-diffusion systems and establish some comparison arguments for the three systems.
Then, we prove
the existence and qualitative properties of entire solutions using the comparison theorem.

Throughout this section, in addition to $\rm(A_0)$ and  $\rm(A_1)$, we also make the following assumptions:
 \begin{description}
 \item[$\rm(A_2)'$] There exist ${\bf K}^\pm=(K_1^\pm,\cdots,K_m^\pm)\gg0$ with $0\ll {\bf K}^- \leq {\bf K} \leq {\bf K}^+$ and two continuous and twice piecewise continuous differentiable functions $f^+,f^-:[{\bf 0},{\bf K}^+]\rightarrow\mathbb{R}^m$ such that $f\in C^2\big([{\bf 0},{\bf K}^+], \mathbb{R}^m\big)$, $f^\pm({\bf0})=f^+({\bf K}^+)=f^-({\bf K}^-)={\bf0}$,  and
     $$ f^-(u)\leq  f(u)\leq f^+(u)\text{ for all }u\in [{\bf 0},{\bf K}^+].$$
 \item[$\rm(A_3)'$]  There is no other positive equilibrium of $f^\pm$ between ${\bf 0}$ and ${\bf K}^\pm$, and $f(u)$ and $f^\pm(u)$ have the same Jacobian matrix $f'({\bf 0})$ at $u={\bf 0}$.
\item[$\rm(A_4)'$]  $\partial_jf_i^\pm (u)\geq0$ for all $u\in [{\bf 0},{\bf K}^+]$, $1\leq j\neq i\leq m$.
 \item[$\rm(A_5)'$]  For any $k\in\mathbb{Z}^+$, $\rho_1,\cdots,\rho_k>0$ and $\lambda_1,\cdots, \lambda_k\in[0, \lambda^*]$,
\[  f^+\big (\min\big \{  {\bf K}^+,   \rho_1 v(\lambda_1)+\cdots  +\rho_k v(\lambda_k)\big \}\big ) \leq f'({\bf0})\big [\rho_1 v(\lambda_1)+\cdots + \rho_k v(\lambda_k)\big  ]. \]
 \end{description}
\begin{remark}\label{rem3.1} {\rm Clearly,  if $f^+(u)\leq f'({\bf0})u$ for $u\in [{\bf 0},{\bf K}^+]$, then $\rm(A_5)'$ holds.
We remark that when \eqref{eq1.1} is cooperative, then $f^\pm=f$ and ${\bf K}^\pm={\bf K}$. We also note that if $f$ is defined on $[ 0,+\infty)^m$, then $\rm(A_5)'$ can
be replaced by $\rm(A_5)^*$:
 \begin{description}
\item[$\rm(A_5)^*$]  For any $k\in\mathbb{Z}^+$, $\rho_1,\cdots,\rho_k>0$ and $\lambda_1,\cdots, \lambda_k\in[0, \lambda^*]$,
\[  f^+\big (  \rho_1 v(\lambda_1)+\cdots  +\rho_k v(\lambda_k)\big ) \leq f'({\bf0})\big [\rho_1 v(\lambda_1)+\cdots + \rho_k v(\lambda_k)\big  ]. \]
\end{description}}
\end{remark}

Denote $W^+= [ {\bf 0}, {\bf K}^+]$. It is easy to verify that for any $\varphi\in [ {\bf 0}, {\bf K}^+]_X$,
system \eqref{eq1.1} admits an unique solution $u(x,t;\varphi)$ satisfying $u (\cdot,\tau;\varphi)=\varphi(\cdot) $ and ${\bf 0} \leq u(x,t;\varphi)\leq {\bf K}^+$ for all  $x\in\mathbb{R}^N$ and $t\geq \tau$.

Now, we consider the following two auxiliary cooperative reaction-diffusion systems
\begin{eqnarray}
u_t=D\Delta u+f^+(u),{\ \ }x\in\mathbb{R}^N,t\in\mathbb{R},\label{eq3.1}\\
u_t=D\Delta u+f^-(u),{\ \ }x\in\mathbb{R}^N,t\in\mathbb{R}.\label{eq3.2}
\end{eqnarray}
Take $\widetilde{ L}=\max_{u\in W^+,i=1,\cdots,m}|\partial_if_i^\pm(u)|$ and define
\[ \widetilde{Q}(u)=(\widetilde{Q}_1(u),\cdots,\widetilde{Q}_m(u))=f(u)+\widetilde{ L}u,\ u\in W^+\]
\[ \widetilde{Q}^\pm(u)=(\widetilde{Q}_1^\pm(u),\cdots,\widetilde{Q}_m^\pm(u))=f^\pm(u)+\widetilde{ L}u,\ u\in W^+.\]
Clearly, $ \widetilde{Q}^\pm(u)$ is non-decreasing in $u$ for $u\in W^+ $ and
$$\widetilde{Q}^-(u)\leq \widetilde{Q}(u) \leq \widetilde{Q}^+(u) \text{ for any }u\in W^+.$$
We further define the operator
$\widetilde{T}(t)=(\widetilde{T}_1(t),\cdots,\widetilde{T}_m(t))$ as (\ref{eq2.2}) by replace $L$ with $\widetilde{ L}$.

The following comparison theorem plays an important role in the proof of our main result for the non-cooperative system.
\begin{lemma}\label{lem3.1}
Let $u,u^\pm\in C(\mathbb{R}^N\times  [\tau,+\infty), W^+)$ be such that
\begin{eqnarray}
u^-(x,t)\leq \widetilde{T}(t-\tau)u^-(x,\tau)+ \int_\tau^t\widetilde{T}(t-s)\widetilde{Q}^-(u^-(x,s))ds ,{\ \ }\forall x\in\mathbb{R}^N,t>\tau,  \label{eq3.3}\\
u(x,t)=\widetilde{T}(t-\tau)u(x,\tau)+ \int_\tau^t\widetilde{T}(t-s)\widetilde{Q}(u(x,s))ds,{\ \ }\forall x\in\mathbb{R}^N,t>\tau,   \label{eq3.4}\\
u^+(x,t)\geq \widetilde{T}(t-\tau)u^+(x,\tau)+ \int_\tau^t\widetilde{T}(t-s)\widetilde{Q}^+(u^+(x,s))ds,{\ \ }\forall x\in\mathbb{R}^N,t>\tau,   \label{eq3.5}
\end{eqnarray}
and $u^-(x,\tau)\leq u(x,\tau)\leq u^+(x,\tau)$. Then, there holds
\[
u^-(x,t)\leq u(x,t)\leq u^+(x,t)\text{ for all }x\in\mathbb{R}^N\text{ and }t>\tau.
\]
\end{lemma}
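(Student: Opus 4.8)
The plan is to prove Lemma \ref{lem3.1} by a two-sided comparison argument that leverages the monotonicity of the operators $\widetilde{Q}^{\pm}$ together with the positivity of the kernels defining $\widetilde{T}(t)$. First I would establish the upper bound $u(x,t)\le u^{+}(x,t)$. Write $w(x,t)=u^{+}(x,t)-u(x,t)$; at $t=\tau$ we have $w(\cdot,\tau)\ge {\bf 0}$. Subtracting \eqref{eq3.4} from \eqref{eq3.5} and using $\widetilde{Q}(u)\le\widetilde{Q}^{+}(u)$ (valid since $u\in W^{+}$) gives
\[
w(x,t)\ \ge\ \widetilde{T}(t-\tau)w(x,\tau)+\int_{\tau}^{t}\widetilde{T}(t-s)\big[\widetilde{Q}^{+}(u^{+}(x,s))-\widetilde{Q}^{+}(u(x,s))\big]\,ds.
\]
The bracket is not automatically sign-definite because it involves $\widetilde{Q}^{+}$ evaluated at two different arguments; this is where the monotonicity $\rm(A_4)'$ of $f^{+}$ (equivalently, that $\widetilde{Q}^{+}$ is nondecreasing on $W^{+}$) enters, but only after one knows $u^{+}\ge u$, so a direct conclusion is circular. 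The standard fix is an iteration/continuation in time on bounded intervals.

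Concretely, the second step is to run a fixed-point or successive-approximation scheme on a short interval $[\tau,\tau+\delta]$. Define the usual monotone iteration: $u^{+,(0)}\equiv {\bf K}^{+}$ (or $u^{+}$ itself), and $u^{(0)}$ the solution $u$, and compare $u^{(k)}$ with $u^{+,(k)}$ using that each step only applies $\widetilde{T}(t-s)$ (a positivity-preserving operator, since the kernels $\Psi_i$ are positive and $e^{-\widetilde{L}t}>0$) to an already-established inequality among the $(k-1)$-st iterates and uses monotonicity of $\widetilde{Q}^{+}$. Because $\widetilde{T}(t)$ maps nonnegative functions to nonnegative functions, and $\widetilde{Q}^{+}$ is nondecreasing, the inequality $u^{(k)}\le u^{+,(k)}$ propagates through the iteration; passing to the limit $k\to\infty$ (the iterates converge locally uniformly by the contraction/monotonicity built into the Picard scheme on $[\tau,\tau+\delta]$ for $\delta$ small, uniformly in $\tau$) yields $u(x,t)\le u^{+}(x,t)$ on $[\tau,\tau+\delta]$. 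Since $\delta$ can be chosen independent of the base point, one bootstraps to all $t>\tau$ by restarting at $\tau+\delta,\tau+2\delta,\dots$, at each stage using that the ordering already holds at the new initial time. The argument for $u^{-}(x,t)\le u(x,t)$ is the mirror image, using $\widetilde{Q}^{-}(u)\le\widetilde{Q}(u)$ on $W^{+}$, the monotonicity of $\widetilde{Q}^{-}$, and the reversed inequality \eqref{eq3.3}.

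An alternative, slightly cleaner route I would also consider is to invoke Lemma \ref{lem2.4}(ii) (or its proof) directly for the system \eqref{eq3.1} and for \eqref{eq3.2}: show that $u$ itself is a subsolution of \eqref{eq3.1} and a supersolution of \eqref{eq3.2}. Indeed, from \eqref{eq3.4} and $\widetilde{Q}(u)\le\widetilde{Q}^{+}(u)$,
\[
u(x,t)\le \widetilde{T}(t-\tau)u(x,\tau)+\int_{\tau}^{t}\widetilde{T}(t-s)\widetilde{Q}^{+}(u(x,s))\,ds,
\]
so $u$ is a subsolution of \eqref{eq3.1} in the sense of Definition \ref{def2.2} (with $L$ replaced by $\widetilde{L}$, $Q$ by $\widetilde{Q}^{+}$); similarly $u$ is a supersolution of \eqref{eq3.2}. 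Since $u^{+}$ is a supersolution of \eqref{eq3.1} with $u^{+}(\cdot,\tau)\ge u(\cdot,\tau)$ and $u^{-}$ a subsolution of \eqref{eq3.2} with $u^{-}(\cdot,\tau)\le u(\cdot,\tau)$, the comparison principle for the two auxiliary cooperative systems — which is exactly Lemma \ref{lem2.4}(ii) applied to \eqref{eq3.1} and \eqref{eq3.2} respectively, these being cooperative by $\rm(A_4)'$ — gives $u^{-}(x,t)\le u(x,t)\le u^{+}(x,t)$ for all $t\ge\tau$.

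The main obstacle is the one already flagged: the bracketed difference $\widetilde{Q}^{\pm}(u^{\pm})-\widetilde{Q}^{\pm}(u)$ (or $\widetilde{Q}(u)-\widetilde{Q}^{\pm}(u^{\pm})$) is only sign-controllable once the desired ordering is in hand, so the comparison cannot be read off the integral inequalities in one line — one must either go through the monotone iteration on short time intervals (making sure the interval length is uniform in $\tau$, which it is because the Picard contraction constant depends only on $\widetilde{L}$ and the sup-norm bounds from $W^{+}$) or, more economically, recognize that $u$ is sandwiched between a sub/supersolution pair of each auxiliary system and quote the already-available comparison Lemma \ref{lem2.4}(ii). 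I would present the second version as the proof, with the verification that $u$ is a subsolution of \eqref{eq3.1} and a supersolution of \eqref{eq3.2} spelled out from \eqref{eq3.4} and the inequalities $\widetilde{Q}^{-}\le\widetilde{Q}\le\widetilde{Q}^{+}$ on $W^{+}$.
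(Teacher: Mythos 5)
Your proposal is correct, but it is organized differently from the paper's argument, so a comparison is in order. The paper proves the two-sided bound directly from the integral inequalities: setting $w=u-u^{+}$, it uses $\widetilde{Q}\le\widetilde{Q}^{+}$, the mean value theorem, and the nonnegativity and boundedness of $\partial_j\widetilde{Q}_i^{+}$ on $W^{+}$ to obtain $[w_i(x,t)]_+\le\int_\tau^t\widetilde{T}_i(t-s)\bigl(L_i\sum_j[w_j(x,s)]_+\bigr)ds$, and then concludes $\sum_i[w_i]_+\equiv0$ from a Gronwall-type convolution inequality via Thieme's lemma; this is exactly how it breaks the circularity you flagged, without any short-time iteration or continuation argument. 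Your preferred route instead observes $\widetilde{Q}^-\le\widetilde{Q}\le\widetilde{Q}^{+}$ on $W^{+}$, recasts $u$ as an integral subsolution of \eqref{eq3.1} and a supersolution of \eqref{eq3.2}, and quotes the cooperative comparison principle (Lemma \ref{lem2.4}(ii)) for the auxiliary systems; this is shorter and perfectly sound, with the small caveat that Lemma \ref{lem2.4}(ii) is stated for \eqref{eq1.1} on $[{\bf 0},{\bf K}]$, so you must note that its proof (Fang--Zhao type) applies verbatim to $f^{\pm}$ on the order interval $W^{+}$ — in particular, for \eqref{eq3.2} the functions live in $W^{+}$ rather than $[{\bf 0},{\bf K}^-]$, which is harmless because only cooperativity and Lipschitz continuity of $f^-$ on $W^{+}$ (assumption $\rm(A_4)'$) are used, not the equilibrium structure. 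In effect, the comparison lemma you cite is proved by the same positive-part/Gronwall mechanism the paper writes out explicitly, so the two proofs differ in packaging rather than substance: the paper's version is self-contained, yours delegates to an already-available lemma. Your first sketched route (short-time Picard/monotone iteration with bootstrapping) is dispensable and, as written, a bit off — $u^{+}$ is not a fixed point of the integral map, so one should Picard-iterate $u$ from an initial iterate below $u^{+}$ and propagate $u^{(k)}\le u^{+}$ using $\widetilde{Q}(u^{(k-1)})\le\widetilde{Q}^{+}(u^{(k-1)})\le\widetilde{Q}^{+}(u^{+})$ — but since you present the sandwiching argument as the actual proof, this does not affect the correctness of the proposal.
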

\begin{proof} We first prove $u(x,t)\leq u^+(x,t)\text{ for all }x\in\mathbb{R}^N$ and $t>\tau$.
 Let $w(x,t)=u(x,t)-u^+(x,t) $ and define
\[
L_i= \max_{u\in W^+,j=1,\cdots,m}\frac{\partial \widetilde{Q}_i^+(u) }{\partial u_j}, {\ }i=1,\cdots,m,\text{ and }[r]_+=\max\{r,0\}\text{ for any }r\in\mathbb{R}.
\]
Since $w(\cdot,\tau)\leq0$ and $\widetilde{Q}^+(u)$ is non-decreasing in $u$ for $u\in W^+$, by \eqref{eq3.4} and \eqref{eq3.5}, we obtain
\begin{eqnarray*}
w_i(x,t)& \leq& \widetilde{T}_i(t-\tau) w_i(x,\tau)+  \int_\tau^t\widetilde{T}_i(t-s)\big[\widetilde{Q}_i(u(x,s))- \widetilde{Q}^+_i(u^+(x,s))\big ]ds\\
& \leq&  \int_\tau^t\widetilde{T}_i(t-s)\big[\widetilde{Q}^+_i(u(x,s))- \widetilde{Q}^+_i(u^+(x,s))\big]ds \\
&=&  \int_\tau^t\widetilde{T}_i(t-s)\left(\int_0^1 \frac{d}{d\theta }\widetilde{Q}_i^+(u^+(x,s) + \theta w(x,s) )d\theta \right)ds\\
&=&  \int_\tau^t\widetilde{T}_i(t-s)\left( \sum _{j=1}^m w_j(x,s)\int_0^1 \frac{\partial }{\partial u_j } \widetilde{Q}_i^+(u^+(x,s) + \theta w(x,s) ) d\theta \right)ds\\
&\leq&  \int_\tau^t\widetilde{T}_i(t-s)\left( L_i \sum _{j=1}^m [w_j(x,s)]_+ \right)ds,{\ }\forall x\in\mathbb{R}^N,t>\tau.
\end{eqnarray*}
Consequently,
\begin{equation}\label{eq3.6}
[w_i(x,t)]_+ \leq\int_\tau^t\widetilde{T}_i(t-s)\left( L_i \sum _{j=1}^m [w_j(x,s)]_+ \right)ds ,{\ }\forall x\in\mathbb{R}^N,t>\tau.
\end{equation}
Let $\varpi(x,t)= \sum _{i=1}^m [w_i(x,t)]_+$. It follows from \eqref{eq3.6} that
\begin{eqnarray*}
\varpi(x,t) &\leq & \sum _{i=1}^m \int_\tau^t\widetilde{T}_i(t-s) L_i \varpi(x,s) ds\\
&\leq&  \int_\tau^t\sum _{i=1}^m \int_{ \mathbb{R}^N }  L_i \Psi_i(x-y,t-s) \varpi(y,s)ds \\
&=&  \int_\tau^t\int_{ \mathbb{R}^N }   P(x-y,t-s) \varpi(y,s)ds,
\end{eqnarray*}
where $ P(y,s)= \sum _{i=1}^m L_i \Psi_i(y,s) $. Using the same argument as in \cite[Lemma 3.2]{Thieme}, we
obtain   $\varpi(x,t) =0$, and hence $u(x,t)\leq u^+(x,t)\text{ for all }x\in\mathbb{R}^N$ and $t>\tau$. Similarly, we can prove that
$u^-(x,t)\leq u(x,t)\text{ for all }x\in\mathbb{R}^N$ and $t>\tau$. This completes the proof.
\end{proof}\medskip

The following result is a direct consequence of Lemma \ref{lem3.1}, see also File \cite{Fife}.
\begin{corollary}\label{cor3.2}
Let $u,u^\pm\in C(\mathbb{R}^N\times  [\tau,+\infty), W^+)$ be such that $u_i,u^\pm_i$ is
$C^1$ in $t$ and $C^2$ in $x$. If
\begin{eqnarray*}
u_t^-\leq D\Delta u^-+f^-(u^-),{\ \ }\forall x\in\mathbb{R}^N,t>\tau,  \\
u_t=D\Delta u+f(u),{\ \ }\forall x\in\mathbb{R}^N,t>\tau,  \\
u_t^+\geq D\Delta u^++f^+(u^+),{\ \ }\forall x\in\mathbb{R}^N,t>\tau,
\end{eqnarray*}
and $u^-(x,\tau)\leq u(x,\tau)\leq u^+(x,\tau)$, then,
\[
u^-(x,t)\leq u(x,t)\leq u^+(x,t)\text{ for all }x\in\mathbb{R}^N,t>\tau.
\]
\end{corollary}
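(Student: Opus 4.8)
The plan is to derive Corollary \ref{cor3.2} directly from Lemma \ref{lem3.1} by verifying that the pointwise differential inequalities imply the corresponding integral inequalities \eqref{eq3.3}--\eqref{eq3.5}. First I would recall the variation-of-constants setup: the operator $\widetilde T(t)$ is built from the heat semigroups $\Psi_i$ together with the factor $e^{-\widetilde L t}$, so that for a smooth function $w$ the quantity $w_i(x,t)-\big(\widetilde T_i(t-\tau)w_i(x,\tau)+\int_\tau^t \widetilde T_i(t-s)[\partial_t w_i - d_i\Delta w_i + \widetilde L w_i](x,s)\,ds\big)$ vanishes identically; this is the standard Duhamel identity for $u_t = d_i\Delta u - \widetilde L u + (\text{inhomogeneity})$. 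Applying it with the inhomogeneity chosen as $f_i(w) + \widetilde L w_i$ (resp. $f_i^\pm(w)+\widetilde L w_i$), one sees that $w$ satisfies $\partial_t w \le d_i\Delta w + f_i^+(w)$ pointwise if and only if, after adding $\widetilde L w_i$ to both sides and applying Duhamel, $w$ satisfies the integral inequality \eqref{eq3.5}; the inequality direction is preserved because $\widetilde T_i(t-s)$ is a positivity-preserving operator (its kernel $e^{-\widetilde L(t-s)}\Psi_i(y,t-s)$ is nonnegative).

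The key steps, in order, are: (1) fix $i$ and write the Duhamel representation for the linear equation $v_t = d_i\Delta v - \widetilde L v + g$ with $g$ continuous and bounded, namely $v_i(x,t) = \widetilde T_i(t-\tau)v_i(x,\tau) + \int_\tau^t \widetilde T_i(t-s) g(x,s)\,ds$; (2) take $v = u^+$ and $g = f_i^+(u^+) + \widetilde L u^+_i = \widetilde Q_i^+(u^+)$, and observe that the hypothesis $u_t^+ \ge D\Delta u^+ + f^+(u^+)$ means $\partial_t u_i^+ - d_i\Delta u_i^+ + \widetilde L u_i^+ \ge \widetilde Q_i^+(u^+)$, i.e. the "source" actually driving $u^+$ dominates $\widetilde Q_i^+(u^+)$; integrating this differential inequality against the positive kernel $\widetilde T_i$ yields exactly \eqref{eq3.5}; (3) repeat verbatim with reversed inequality for $u^-$ using $f^-$, giving \eqref{eq3.3}, and with equality for $u$ using $f$, giving \eqref{eq3.4}; (4) note $u^-(x,\tau)\le u(x,\tau)\le u^+(x,\tau)$ is assumed; (5) invoke Lemma \ref{lem3.1} to conclude $u^-\le u\le u^+$ on $\mathbb{R}^N\times(\tau,+\infty)$. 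Since $u,u^\pm$ take values in $W^+$ by hypothesis, all of $\widetilde Q,\widetilde Q^\pm$ are evaluated within their domain of definition, so no domain issues arise.

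The main obstacle is making the passage from the pointwise differential inequality to the integral inequality fully rigorous, because one must justify that convolution with the (possibly singular-at-$s=t$, but integrable) heat kernel preserves the $\ge$/$\le$ sign of a differential inequality. The clean way is: set $w := u^+$, let $h(x,t) := \partial_t u_i^+ - d_i\Delta u_i^+ + \widetilde L u_i^+ - \widetilde Q_i^+(u^+) \ge 0$ (continuous, bounded on compact time intervals by the regularity assumption on $u^+$ and smoothness of $f^+$), so that $u_i^+$ solves the linear inhomogeneous heat-type equation with source $\widetilde Q_i^+(u^+) + h$; then Duhamel gives $u_i^+(x,t) = \widetilde T_i(t-\tau)u_i^+(x,\tau) + \int_\tau^t\widetilde T_i(t-s)\big[\widetilde Q_i^+(u^+) + h\big](x,s)\,ds \ge \widetilde T_i(t-\tau)u_i^+(x,\tau) + \int_\tau^t\widetilde T_i(t-s)\widetilde Q_i^+(u^+)(x,s)\,ds$, the last inequality because $h\ge0$ and $\widetilde T_i$ maps nonnegative functions to nonnegative functions. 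A minor technical point to address is the validity of the Duhamel formula itself for a function that is merely $C^1$ in $t$ and $C^2$ in $x$ and bounded — this is classical (cf. Friedman \cite{Friedman}) once one knows $u^+$ is bounded and its first and second spatial derivatives and first time derivative are bounded, which can be arranged on each strip $[\tau,\tau+T]$. Beyond this, everything is a direct citation of Lemma \ref{lem3.1}, so no further difficulty is expected.
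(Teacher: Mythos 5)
Your proposal is correct and takes essentially the same route as the paper: the paper disposes of Corollary \ref{cor3.2} by simply citing Lemma \ref{lem3.1} (and Fife), and your Duhamel/variation-of-constants argument supplies exactly the standard verification that the pointwise differential inequalities yield the integral inequalities \eqref{eq3.3}--\eqref{eq3.5} needed to invoke that lemma. The only caveat is the technical point you already flag yourself (validity of the representation formula for bounded classical sub/supersolutions), which the paper likewise leaves implicit.
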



From the argument of Wang \cite[Theorem 2.1]{Wang}, we have the following result.
\begin{proposition}\label{Pro3.3} Let $\rm( A_0)$--$\rm( A_1)$ and $\rm( A_2)'$--$\rm( A_5)'$ hold. For any $c>c_*$ and $\nu \in\mathbb{R}^N$ with $\|\nu\|=1$,
(\ref{eq3.2}) has a non-decreasing traveling wave solution
$$\Phi_c^-(x\cdot\nu+ct)=\big(\phi_{1,c}^-(x\cdot\nu+ct),\cdots,\phi_{m,c}^-(x\cdot\nu+ct)\big),$$
 which satisfies $\Phi_c^-(\cdot)\gg{\bf 0}$, $\Phi_c^-(-\infty)={\bf 0}$, $\Phi_c^-(+\infty)={\bf K}^-$ and
\begin{equation}
\lim_{\xi\rightarrow-\infty}\Phi_{c}^-(\xi)e^{-\lambda_1(c)\xi}=v(\lambda_1(c)),{\ }
\Phi_{c}^-(\xi)\leq v(\lambda_1(c))e^{\lambda_1(c)\xi}\text{ for all }\xi\in\mathbb{R}.
\label{eq3.7}
\end{equation}
Here, $c_*$, $\lambda_1(c)$ and $v(\lambda_1(c))$ are given as in Section 1.
\end{proposition}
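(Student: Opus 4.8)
\noindent The plan is to read Proposition~\ref{Pro3.3} as the cooperative theory of Section~2 (equivalently Wang's \cite[Theorem~2.1]{Wang}) applied to the auxiliary lower system \eqref{eq3.2} on the box $[{\bf 0},{\bf K}^-]$. First I would check that the data $(f^-,{\bf K}^-)$ meets the analogues of $\rm(A_0)$--$\rm(A_3)$. From $\rm(A_2)'$ and $\rm(A_3)'$: $f^-({\bf 0})=f^-({\bf K}^-)={\bf 0}$, $f^-$ is (piecewise) twice differentiable on $[{\bf 0},{\bf K}^+]$, there is no other positive equilibrium of $f^-$ between ${\bf 0}$ and ${\bf K}^-$, and the Jacobian of $f^-$ at the origin is $f'({\bf 0})$; by $\rm(A_4)'$, \eqref{eq3.2} is cooperative on $[{\bf 0},{\bf K}^+]$. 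Because the linearization at ${\bf 0}$ is the \emph{same} matrix $f'({\bf 0})$ that enters $\rm(A_1)$, the hypothesis $\rm(A_1)$ transfers verbatim, so the numbers $c_*,\lambda_*,\lambda^*$, the map $c\mapsto\lambda_1(c)$ and the Perron eigenvectors $v(\lambda),v^*$ are exactly those fixed in Section~1. Only the sub-tangency condition---the analogue of $\rm(A_3)$ for $f^-$---remains.

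For this, the cleanest route is to observe that, since $f^-\le f^+$ on $[{\bf 0},{\bf K}^+]$, \emph{every} supersolution of the travelling-wave problem for \eqref{eq3.1} is a supersolution of the travelling-wave problem for \eqref{eq3.2}; in particular, by $\rm(A_5)'$ the function $\min\{{\bf K}^+,v(\lambda_1(c))e^{\lambda_1(c)\xi}\}$ is a supersolution for the $f^+$-wave, hence an upper estimate for the $f^-$-wave, while the matching subsolution $\max\{{\bf 0},\,v(\lambda_1(c))e^{\lambda_1(c)\xi}-q\,v(\mu)e^{\mu\xi}\}$, with $\lambda_1(c)<\mu\le\min\{2\lambda_1(c),\lambda_*\}$ and $q$ large, is built only from the common linearization $f'({\bf 0})$ at ${\bf 0}$. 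Equivalently one verifies the $\rm(A_3)$-analogue for $f^-$ directly: for $w=\rho_1v(\lambda_1)+\cdots+\rho_kv(\lambda_k)\gg{\bf 0}$ one has $f^-(\min\{{\bf K}^-,w\})\le f^+(\min\{{\bf K}^-,w\})$, and $\rm(A_5)'$ together with the cooperativity of $f^+$ gives $f^-(\min\{{\bf K}^-,w\})\le f'({\bf 0})w$; this is immediate when $f^+(u)\le f'({\bf 0})u$, cf.\ Remark~\ref{rem3.1}.

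With the hypotheses in place I would invoke Proposition~\ref{Pro2.1} applied to \eqref{eq3.2} to obtain, for every $c\ge c_*$ and every unit vector $\nu$, a front $\Phi_c^-(x\cdot\nu+ct)$ with $\Phi_c^-(\cdot)\gg{\bf 0}$, $\Phi_c^-(-\infty)={\bf 0}$ and $\Phi_c^-(+\infty)={\bf K}^-$, together with $\lim_{\xi\to-\infty}\Phi_c^-(\xi)e^{-\lambda_1(c)\xi}=v(\lambda_1(c))$ and $\Phi_c^-(\xi)\le v(\lambda_1(c))e^{\lambda_1(c)\xi}$ for all $\xi$; restricting to $c>c_*$ gives \eqref{eq3.7}. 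That $\Phi_c^-$ is non-decreasing comes from the monotone iteration used in the construction, whose initial supersolution is monotone in the wave variable; alternatively, differentiating the profile equation and using cooperativity with the strong maximum principle---exactly as in the argument for $\Gamma'\gg{\bf 0}$ in Lemma~\ref{lem2.7}---yields $(\Phi_c^-)'\gg{\bf 0}$.

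The main obstacle I anticipate is twofold. First, one must feed the construction a supersolution that actually tends to ${\bf K}^-$ at $+\infty$, so that the bounded monotone limit profile is forced onto an equilibrium of $f^-$ in $({\bf 0},{\bf K}^-]$, which by $\rm(A_3)'$ can only be ${\bf K}^-$; using $\min\{{\bf K}^+,\cdots\}$ is convenient for the estimate but then requires the extra comparison with $\min\{{\bf K}^-,\cdots\}$ to pin down the limit---hence the usefulness of the direct verification of the $\rm(A_3)$-analogue above, where the truncation at ${\bf K}^-$ (not ${\bf K}^+$) is the delicate point. Second, one must confirm that the exponential decay rate at $-\infty$ is unchanged by $f^-$, which is where the coincidence of the linearizations and the sandwich between the two exponential barriers are essential. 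Everything else is a routine transcription of the cooperative arguments of Section~2 and of \cite{Wang,Fangz}.
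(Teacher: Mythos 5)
Your overall route --- treat \eqref{eq3.2} as a cooperative monostable system on $[{\bf 0},{\bf K}^-]$ whose linearization at ${\bf 0}$ coincides with that of \eqref{eq1.1}, so that $c_*$, $\lambda_1(c)$ and $v(\lambda_1(c))$ are unchanged, and then run the cooperative existence theory --- is exactly what the paper intends: it offers no proof beyond citing Wang's Theorem~2.1, just as Proposition~\ref{Pro2.1} is cited from \cite{Fangz,Wang}. The genuine gap is the step where you claim the $\rm(A_3)$-analogue for $f^-$ truncated at ${\bf K}^-$. From $f^-\le f^+$ you do get $f^-(\min\{{\bf K}^-,w\})\le f^+(\min\{{\bf K}^-,w\})$, but $\rm(A_4)'$ only gives monotonicity of $f_i^+$ in the \emph{off-diagonal} arguments; in any component with $w_i>K_i^-$ the $i$-th arguments of $\min\{{\bf K}^-,w\}$ and $\min\{{\bf K}^+,w\}$ differ ($K_i^-$ versus $\min\{K_i^+,w_i\}$), so $f_i^+(\min\{{\bf K}^-,w\})\le f_i^+(\min\{{\bf K}^+,w\})$ does not follow and $\rm(A_5)'$ cannot be transported. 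The fallback ``immediate when $f^+(u)\le f'({\bf 0})u$'' has the same defect: $f'({\bf 0})$ is only Metzler (its diagonal may be negative), so $f'({\bf 0})\min\{{\bf K}^-,w\}\le f'({\bf 0})w$ is false in general. In fact the full vector inequality can fail in a truncated component: there $[f'({\bf 0})w]_i=\sum_k\rho_k\big(M(\lambda_k)-d_i\lambda_k^2\big)v_i(\lambda_k)$, which nothing in the hypotheses keeps above the bounded quantity $f_i^-(\min\{{\bf K}^-,w\})$.

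The repair --- and what the citation to Wang actually uses --- is that the inequality is never needed in the truncated components. When checking that $\bar\phi(\xi)=\min\{{\bf K}^-,v(\lambda_1(c))e^{\lambda_1(c)\xi}\}$ is a supersolution of the profile equation for $f^-$, argue componentwise: where $\bar\phi_i=K_i^-$, cooperativity of $f^-$ and $f^-({\bf K}^-)={\bf 0}$ give $f_i^-(\bar\phi)\le f_i^-({\bf K}^-)=0$, which is all that is required there; where $\bar\phi_i=v_i(\lambda_1(c))e^{\lambda_1(c)\xi}\le K_i^-$, your chain through $f^+$ does work, because the $i$-th arguments agree and only off-diagonal ones are increased, so $f_i^-(\bar\phi)\le f_i^+(\min\{{\bf K}^+,w\})\le[f'({\bf 0})w]_i$ by $\rm(A_4)'$ and $\rm(A_5)'$. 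With this supersolution (truncated at ${\bf K}^-$, so the monotone limit at $+\infty$ is pinned to ${\bf K}^-$ by $\rm(A_3)'$) and your subsolution built from the common linearization, the monotone-iteration/fixed-point construction of \cite{Fangz,Wang} yields the non-decreasing front and the asymptotics \eqref{eq3.7}. Note, however, that you then cannot literally ``invoke Proposition~\ref{Pro2.1} applied to \eqref{eq3.2}'', since its hypothesis $\rm(A_3)$ is precisely the full inequality you have not (and cannot in general) establish; you must rerun the construction with the differential-inequality super- and subsolutions as above, which is what the appeal to Wang's argument amounts to.
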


We also consider the following ordinary differential system
\begin{eqnarray}
u'(t)=f^-(u),{\ \ }t\in\mathbb{R}.\label{eq3.8}
\end{eqnarray}
By Lemma \ref{lem2.7}, the following result holds.
\begin{lemma}\label{lem3.4} Let  $\rm( A_0)$--$\rm( A_1)$ and $\rm( A_2)'$--$\rm( A_5)'$ hold. There exists a solution $\Gamma^-(t):\mathbb{R}\rightarrow W^+$
of \eqref{eq3.8} which satisfies $\Gamma^-(-\infty)={\bf 0}$ and $\Gamma^-(+\infty)={\bf K}^-$. Furthermore,
$$\frac{d}{dt}\Gamma^-(t)\gg {\bf 0},{\ }\lim_{t\rightarrow-\infty}\Gamma^-(t)e^{-\lambda^*t}=v^*\text{ and } \Gamma^-(t)\leq e^{\lambda^*t}v^*\text{ for all }t\in\mathbb{R},$$
where $\lambda^*=M(0)$ and $v^*=v(0)$.
\end{lemma}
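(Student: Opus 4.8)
The plan is to deduce Lemma~\ref{lem3.4} from Lemma~\ref{lem2.7}, since the monotone iteration scheme used to prove the latter relies only on a few structural features of the reaction term, all of which are shared by $f^-$ on $[{\bf 0},{\bf K}^-]$. First I would carry out the hypothesis bookkeeping: $u'=f^-(u)$ is cooperative on $[{\bf 0},{\bf K}^-]$ by $\rm(A_4)'$; it has the same Jacobian $(f^-)'({\bf 0})=f'({\bf 0})$ at ${\bf 0}$ by $\rm(A_3)'$, so the quantities $\lambda^*=M(0)$, $v^*=v(0)$, $c_*$ attached to the linearization are unchanged; and $f^-({\bf 0})=f^-({\bf K}^-)={\bf 0}$ with no positive equilibrium of $f^-$ strictly between ${\bf 0}$ and ${\bf K}^-$, by $\rm(A_2)'$ and $\rm(A_3)'$. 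Hence the $\rm(A_0)$-- and $\rm(A_1)$-type requirements used in the proof of Lemma~\ref{lem2.7} hold verbatim for $f^-$ on $[{\bf 0},{\bf K}^-]$, and $Q^-(u)=f^-(u)+\widetilde{L}u$ is nondecreasing there.

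The only hypothesis that is not a literal transcription is the $\rm(A_3)$-type upper bound, which in the present ODE setting amounts to checking that $\overline{\phi}(t):=\big(\min\{K_1^-,v_1^*e^{\lambda^*t}\},\cdots,\min\{K_m^-,v_m^*e^{\lambda^*t}\}\big)$ is a supersolution of \eqref{eq3.8}, i.e. $\tfrac{d}{dt}\overline{\phi}(t)\geq f^-(\overline{\phi}(t))$. For a coordinate $i$ with $v_i^*e^{\lambda^*t}\geq K_i^-$ the left-hand side is $0$, while the right-hand side is $f_i^-(u)$ at a point $u\leq{\bf K}^-$ with $u_i=K_i^-$; cooperativity of $f^-$ gives $f_i^-(u)\leq f_i^-({\bf K}^-)=0$. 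For a coordinate $i$ with $v_i^*e^{\lambda^*t}<K_i^-$ one has $\overline{\phi}(t)\leq\min\{{\bf K}^+,v^*e^{\lambda^*t}\}$ with equality in the $i$-th slot, so cooperativity of $f^-$ yields $f_i^-(\overline{\phi}(t))\leq f_i^-\big(\min\{{\bf K}^+,v^*e^{\lambda^*t}\}\big)\leq f_i^+\big(\min\{{\bf K}^+,v^*e^{\lambda^*t}\}\big)$, and then $\rm(A_5)'$ together with $f'({\bf 0})v^*=A(0)v^*=\lambda^*v^*$ bounds this by $\lambda^*v_i^*e^{\lambda^*t}=\tfrac{d}{dt}\overline{\phi}_i(t)$. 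The matching subsolution $\underline{\phi}_i(t)=\max\{0,v_i^*e^{\lambda^*t}-qv_i^*e^{\varepsilon\lambda^*t}\}$ (with $\varepsilon\in(1,2)$ fixed and $q$ large) is treated exactly as in Lemma~\ref{lem2.7}, using only $(f^-)'({\bf 0})=f'({\bf 0})$ and the $C^2$ regularity from $\rm(A_2)'$.

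With both comparison functions available, the remainder is a word-for-word repetition of the proof of Lemma~\ref{lem2.7}: the monotone iteration from $\overline{\phi}$ (resp. $\underline{\phi}$) produces a monotone solution $\Gamma^-$ of \eqref{eq3.8} with $\underline{\phi}(t)\leq\Gamma^-(t)\leq\overline{\phi}(t)$ for all $t$, which forces $\Gamma^-(-\infty)={\bf 0}$, $\Gamma^-(t)\leq e^{\lambda^*t}v^*$ and $\lim_{t\to-\infty}\Gamma^-(t)e^{-\lambda^*t}=v^*$; the limit $\Gamma^-(+\infty)$ lies in $({\bf 0},{\bf K}^-]$ and equals ${\bf K}^-$ since $\Gamma^-$ is monotone and $f^-$ has no equilibrium between ${\bf 0}$ and ${\bf K}^-$; and $\tfrac{d}{dt}\Gamma^-(t)\gg{\bf 0}$ follows from the differential inequality $(\Gamma_i^-)''(t)\geq m_0^-(\Gamma_i^-)'(t)$, with $m_0^-:=\min_{i,\,u\in[{\bf 0},{\bf K}^-]}\partial_if_i^-(u)$, via the same contradiction argument. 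I expect the supersolution verification for $f^-$ in the second paragraph --- in particular the bookkeeping that lets one pass from $\min\{{\bf K}^-,\cdot\}$ to $\min\{{\bf K}^+,\cdot\}$ in order to invoke $\rm(A_5)'$ --- to be the only step requiring genuine care; everything else is inherited directly from Lemma~\ref{lem2.7}.
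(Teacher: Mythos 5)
Your proposal is correct and follows essentially the same route as the paper, which simply invokes Lemma \ref{lem2.7} for the lower system $u'=f^-(u)$ without further detail. In fact you supply the one verification the paper leaves implicit — recovering the $\rm(A_3)$-type supersolution inequality for $f^-$ from $\rm(A_4)'$ (cooperativity), $f^-\leq f^+$ and $\rm(A_5)'$ with $\lambda=0$ — and that bookkeeping is done correctly.
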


The following theorem contains the main results of this section.
\begin{theorem}\label{thm3.5} Let  $\rm( A_0)$--$\rm( A_1)$ and $\rm( A_2)'$--$\rm( A_5)'$ hold. For any $l\in \mathbb{Z}^+$, $\nu_1,\cdots,\nu_l\in\mathbb{R}^N$ with $\|\nu_i\|=1$, $h_1,\cdots,h_{l+1}\in\mathbb{R}$, $c_1,\cdots,c_l>c_*$, and $\chi_1,\cdots,\chi_{l+1} \in\{0,1\}$ with $\chi_1+\cdots+\chi_{l+1}\geq 1$, there exists an entire solution $U(x,t):=\big(U_{1} (x,t),\cdots,U_{m}  (x,t)\big)$ of \eqref{eq1.1} such that
\begin{eqnarray}
u^-(x,t)\leq U(x,t)\leq \min\big\{ {\bf K}^+,\Pi(x,t)\big\}
\label{eq3.9}
\end{eqnarray}
$\text{ for all }(x,t)\in\mathbb{R}^{N+1},$ where
 \begin{eqnarray*}
&&u^-(x,t)= \max\Big\{\max_{i=1,\cdots,l}\chi_i\Phi_{c_i}^- \big(x\cdot\nu_i+c_it+h_i\big),\chi_ {l+1} \Gamma^-(t+h_{l+1})\Big\},\\
 &&\Pi(x,t)= \sum_{i=1}^l \chi_iv(\lambda_1(c_i))
e^{\lambda_1(c_i)(x\cdot\nu_i+c_it+h_i)}+\chi _{l+1} v^* e^{\lambda^*(t+h_{l+1})} .
\end{eqnarray*}

Furthermore, the following statements hold:
\begin{description}
\item[$\rm(i)$] $U(x,t)\gg0$ for $(x,t)\in\mathbb{R}^{N+1}$ and  $\lim_{t\rightarrow-\infty}\sup_{\|x\|\leq A}\|U(x,t)\big\|=0$ for any $A\in\mathbb{R}_+$.

\item[$\rm(ii)$]  If $\chi_{l+1} =1$, then $\liminf_{ t\rightarrow+\infty}\inf_{ x\in\mathbb{R} }U(x,t)  \geq K^-$ and for every $x\in\mathbb{R}^N$,
$$U(x,t)\sim v^*e^{\lambda^*(t+h_{l+1})}\text{ as }t\rightarrow-\infty.$$

\item[$\rm(iii)$] If $\chi_{l+1} =0$, then  $\liminf_{ t\rightarrow+\infty}\inf_{ \|x\|\leq A }U(x,t)  \geq K^-$ for any $A\in\mathbb{R}_+$ and for every $x\in\mathbb{R}^N$,
$$U(x,t)=O\big(e^{\vartheta(c_1,\cdots,c_l)t}\big) \text{ as }t\rightarrow-\infty,$$
  where $\vartheta(c_1,\cdots,c_l)=\min\big\{c_1\lambda_1(c_1),\cdots,c_l\lambda_1(c_l)\big\}$.
 \end{description}

\end{theorem}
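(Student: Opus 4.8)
The plan is to mimic the construction used for the cooperative case (Theorems \ref{thm2.9}, \ref{thm2.10}), but now sandwiching the solution of \eqref{eq1.1} between the dynamics of the two auxiliary cooperative systems \eqref{eq3.1} and \eqref{eq3.2} by way of Lemma \ref{lem3.1} and Corollary \ref{cor3.2}. First I would fix the data $l,\nu_i,h_i,c_i,\chi_i$ as in the statement and, for each $n\in\mathbb{N}$, define the initial datum
\[
\varphi^n(x):=\max\Big\{\max_{i=1,\cdots,l}\chi_i\Phi^-_{c_i}\big(x\cdot\nu_i-c_in+h_i\big),\ \chi_{l+1}\Gamma^-(-n+h_{l+1})\Big\},
\]
which lies in $[{\bf 0},{\bf K}^-]_X\subset[{\bf 0},{\bf K}^+]_X$, and let $U^n(x,t)$ be the solution of \eqref{eq1.1} on $[-n,+\infty)$ with $U^n(\cdot,-n)=\varphi^n$. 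The lower bound $u^-(x,t)\leq U^n(x,t)$ comes from Corollary \ref{cor3.2}: the function $u^-$ is, componentwise-maximum of, subsolutions of \eqref{eq3.2}, hence a subsolution of \eqref{eq3.2}, and since $f^-\leq f$ it is a subsolution of \eqref{eq1.1}; moreover $u^-(\cdot,-n)=\varphi^n=U^n(\cdot,-n)$, so $u^-\leq U^n$ for $t\geq-n$. (That $u^-$ is a genuine sub\-solution — i.e.\ that the maximum of subsolutions is a subsolution in the integral sense — follows as in Lemma \ref{lem2.4}(ii) / the cooperative argument, using that $\widetilde Q^-$ is monotone.)

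For the upper bound I would set $v^+(x,t)=\min\{{\bf K}^+,\Pi(x,t)\}$ and show, exactly as in Lemma \ref{lem2.11} but with $f^+$, $\widetilde L$, $\widetilde Q^+$, $\widetilde T$ replacing $f$, $L$, $Q$, $T$, that $v^+$ is a supersolution of \eqref{eq3.1} in the sense of \eqref{eq3.5}: the clipping at ${\bf K}^+$ is handled by the elementary estimate $\widetilde T_i(t+n)K_i^++\int_{-n}^t\widetilde T_i(t-s)\widetilde Q_i^+({\bf K}^+)\,ds=K_i^+$, while the clipping at $\Pi$ uses that $\Pi$ solves the linear equation $\Pi_t=D\Delta\Pi+f'({\bf 0})\Pi$ (this is where $A(0)v^*=\lambda^*v^*$, $A(\lambda_1(c_i))v(\lambda_1(c_i))=c_i\lambda_1(c_i)v(\lambda_1(c_i))$, and assumption $\rm(A_5)'$ enter: $\widetilde Q^+(v^+)=f^+(v^+)+\widetilde L v^+\leq f'({\bf 0})\Pi+\widetilde L\Pi$). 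Also $\varphi^n(x)\leq\Pi(x,-n)$ by \eqref{eq3.7} and Lemma \ref{lem3.4}, so $U^n(\cdot,-n)=\varphi^n\leq v^+(\cdot,-n)$, and Lemma \ref{lem3.1} (with $u^-$ absent, $u=U^n$, $u^+=v^+$) gives $U^n(x,t)\leq\min\{{\bf K}^+,\Pi(x,t)\}$ for $t\geq-n$. Then, because $\varphi^n(x)\leq\varphi^{n+1}(x)$ for $t\geq-n$ (translation monotonicity of $\Phi^-_{c_i}$ and $\Gamma^-$ plus Corollary \ref{cor3.2}), the sequence $U^n$ is monotone increasing in $n$; combining this with the parabolic a priori bounds of Lemma \ref{lem2.5} (valid on $[{\bf 0},{\bf K}^+]$) and a diagonal extraction, $U^n\to U$ in the topology $\mathcal T$, and $U$ is an entire solution of \eqref{eq1.1} satisfying \eqref{eq3.9}.

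Finally I would read off the qualitative statements (i)--(iii) from the squeeze \eqref{eq3.9}, essentially as in the proof of Theorem \ref{thm2.9}. The positivity $U\gg{\bf 0}$ and $\lim_{t\to-\infty}\sup_{\|x\|\leq A}\|U(x,t)\|=0$ follow because $U\geq u^-\gg{\bf 0}$ for every fixed $(x,t)$ (taking $\tau\to-\infty$ in the representation of $U^n$ and using $\Phi^-_{c_i},\Gamma^-\gg{\bf 0}$), and from $U\leq\Pi(x,t)\to{\bf 0}$ as $t\to-\infty$ with $\|x\|$ bounded. The asymptotics as $t\to-\infty$ in (ii) and (iii) use the exact decay rates $\lim_{\xi\to-\infty}\Phi^-_{c_i}(\xi)e^{-\lambda_1(c_i)\xi}=v(\lambda_1(c_i))$ and $\lim_{t\to-\infty}\Gamma^-(t)e^{-\lambda^*t}=v^*$ from Proposition \ref{Pro3.3} and Lemma \ref{lem3.4}, together with $c\lambda_1(c)\geq\lambda^*$ for $c>c_*$ (since $M(\lambda)\geq M(0)=\lambda^*$), which tells us that when $\chi_{l+1}=1$ the SIS term dominates. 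The lower bounds $\liminf_{t\to+\infty}U\geq{\bf K}^-$ follow since $U\geq u^-\geq\chi_j\Phi^-_{c_j}(x\cdot\nu_j+c_jt+h_j)\to{\bf K}^-$ (for a fixed direction when $\chi_{l+1}=0$ and $\|x\|\leq A$, or uniformly using $\Gamma^-$ when $\chi_{l+1}=1$). I expect the main obstacle to be the careful verification that the integral-formulated comparison of Lemma \ref{lem3.1} applies cleanly to the clipped supersolution $v^+=\min\{{\bf K}^+,\Pi\}$ — i.e.\ checking that $v^+$ really satisfies \eqref{eq3.5} at the non-smooth "corners" where $\Pi$ meets ${\bf K}^+$ — and, relatedly, confirming that the maximum defining $u^-$ is a subsolution in the required integral sense; everything else is a routine transcription of the cooperative proof with the appropriate $f^\pm$, ${\bf K}^\pm$ substitutions.
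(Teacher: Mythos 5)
Your construction is essentially the paper's: the same initial data $\widetilde{\varphi}^n$ built from $\Phi^-_{c_i}$ and $\Gamma^-$, the same use of the integral comparison Lemma \ref{lem3.1} with the subsolution inequality \eqref{eq3.11} verified termwise via monotonicity of $\widetilde Q^-$, the same adaptation of Lemma \ref{lem2.11} (with $f^+$, $\widetilde L$, $\widetilde Q^+$, $\widetilde T$ and $\rm(A_5)'$) for the clipped upper barrier $\min\{{\bf K}^+,\Pi\}$, and the same passage to the limit by the a priori estimates plus diagonal extraction, with (i)--(iii) read off from the squeeze \eqref{eq3.9} together with $c\lambda_1(c)\geq\lambda^*$ and the decay rates in Proposition \ref{Pro3.3} and Lemma \ref{lem3.4}.

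One step in your write-up would fail, though it is inessential: the claim that $U^n$ is monotone increasing in $n$. From $U^{n+1}(\cdot,-n)\geq u^-(\cdot,-n)=\widetilde\varphi^n=U^n(\cdot,-n)$ you cannot conclude $U^{n+1}\geq U^n$ on $t\geq-n$, because both functions solve the \emph{non-cooperative} system \eqref{eq1.1}, for which no comparison principle between two solutions is available; Corollary \ref{cor3.2} and Lemma \ref{lem3.1} only compare a solution of \eqref{eq1.1} with sub/supersolutions of the auxiliary cooperative systems \eqref{eq3.2} and \eqref{eq3.1}. This is precisely why the paper, unlike in the cooperative Theorem \ref{thm2.9}, does not assert monotonicity (or monotonicity in $t$, or convergence of the full sequence) and instead settles for a subsequence $W^{n_k}\to U$ in the topology $\mathcal T$. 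Since the subsequential limit is already an entire solution satisfying \eqref{eq3.9}, dropping the monotonicity claim costs nothing, and the rest of your argument goes through as written.
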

\begin{proof}
Let $W^n(x,t)=\big(W_1^n(x,t),\cdots,W_m^n(x,t)\big)$
be the unique solution of the following initial value problem
\begin{align*}
    \left\{
       \begin{array}{ll}
       u_t=D\Delta u+f(u),{\ }x\in\mathbb{R}^N,t>-n,\\
      u(x,-n)=\widetilde{\varphi}^n(x),{\ }x\in\mathbb{R}^N,
 \end{array}
    \right.
\end{align*}
where
\[\widetilde{\varphi}^n(x):= \max\left\{\max_{i=1,\cdots,l}\chi_i\Phi_{c_i}^- \big(x\cdot\nu_i-c_in+h_i\big),\chi _{l+1}\Gamma^-(-n+h_{l+1})\right\}.
\]
We first show the following claim.\\
{\it Claim.} The function $W^n(x,t)$ satisfies
\begin{eqnarray}
u^-(x,t)\leq W^n(x,t)\leq u^+(x,t):=\min\big\{ {\bf K}^+,\Pi(x,t)\big\}\text{ for all }x\in\mathbb{R}^{N},t>-n.
\label{eq3.10}
\end{eqnarray}

In fact, from Proposition \ref{Pro3.3} and Lemma \ref{lem3.4}, we see that
\[
 u^-(x,-n)=  \widetilde{\varphi}^n(x)=  W^n(x,-n)\leq \min\big\{ {\bf K}^+,\Pi(x,-n)\big\}=u^+(x,-n),\ \forall x\in\mathbb{R}.
\]
By Lemma \ref{lem3.1}, it suffices to show that for any $x\in\mathbb{R}^N,t>-n,$
\begin{equation}\label{eq3.11}
u^-(x,t)\leq \widetilde{T}(t+n)u^-(x,-n)+ \int_{-n}^t\widetilde{T}(t-s)\widetilde{Q}^-(u^-(x,s))ds,
\end{equation}
\begin{equation}\label{eq3.12}
u^+(x,t)\geq \widetilde{T}(t+n)u^+(x,-n)+ \int_{-n}^t\widetilde{T}(t-s)\widetilde{Q}^+(u^+(x,s))ds.
\end{equation}
Now we prove \eqref{eq3.11}.
Note that the function $ \widetilde{u}(x,t):=\chi_j\Phi_{c_j}^- \big(x\cdot\nu_j+c_jt+h_j\big)$ ($j=1,\cdots,l$), satisfies the equation
\begin{align*}
       \widetilde{u}_t=D\Delta \widetilde{u}+f^-(\widetilde{u}),
     \end{align*}
or the integral equation
\[
\widetilde{u}(x,t)=  \widetilde{T}(t+n)\widetilde{u}(x,-n)+ \int_{-n}^t\widetilde{T}(t-s)\widetilde{Q}^-(\widetilde{u}(x,s))ds.
\]
Since $u^-(x,t)\geq \widetilde{u}(x,t) $ for $x\in\mathbb{R}^N,t\geq-n,$ and  $\widetilde{Q}^-(u)=f^-(u)+\widetilde{ L}u$ is non-decreasing in $u$ for $u\in W^+ $,  we have
\begin{eqnarray*}
&&\widetilde{T}(t+n)u^-(x,-n)+ \int_{-n}^t\widetilde{T}(t-s)\widetilde{Q}^-(u^-(x,s))ds\\
&&\geq\widetilde{T}(t+n)\widetilde{u}(x,-n)+ \int_{-n}^t\widetilde{T}(t-s)\widetilde{Q}^-(\widetilde{u}(x,s))ds\\
&&=\widetilde{u}(x,t)   ,         \ \forall  x\in\mathbb{R}^N,t>-n,
\end{eqnarray*}
that is,
\begin{equation}\label{eq3.13}
\widetilde{T}(t+n)u^-(x,-n)+ \int_{-n}^t\widetilde{T}(t-s)\widetilde{Q}^-(u^-(x,s))ds\geq \chi_j\Phi_{c_j}^- \big(x\cdot\nu_i+c_jt+h_j\big).
\end{equation}
Similarly, we can show that for $x\in\mathbb{R}^N,t>-n,$
\begin{equation}\label{eq3.14}
\widetilde{T}(t+n)u^-(x,-n)+ \int_{-n}^t\widetilde{T}(t-s)\widetilde{Q}^-(u^-(x,s))ds \geq \chi_ {l+1} \Gamma^-(t+h_{l+1}).
\end{equation}
Hence, \eqref{eq3.11} follows from \eqref{eq3.13} and \eqref{eq3.14}.

Next, we prove \eqref{eq3.12}.
Since $\widetilde{Q}^+(u)=f^+(u)+\widetilde{ L}u$ is non-decreasing in $u$ for $u\in W^+ $,  we get for $x\in\mathbb{R}^N,t>-n,$
\begin{eqnarray*}
&&\widetilde{T}_i(t+n)u^+_i(x,-n)+ \int_{-n}^t\widetilde{T}_i(t-s)\widetilde{Q}_i^+(u^+(x,s))ds\\
&&\leq e^{-\widetilde{ L} (t+n)}K_i^++ \int_{-n}^te^{-\widetilde{ L} (t-s)}K_i^+ \widetilde{ L} ds=K_i^+,\ i=1,\cdots,m.
\end{eqnarray*}
Consequently,
\begin{equation}\label{eq3.15}
\widetilde{T}(t+n)u^+(x,-n)+ \int_{-n}^t\widetilde{T}(t-s)\widetilde{Q}^+(u^+(x,s))ds\leq {\bf K}^+,{\  }\forall x\in\mathbb{R}^N,t>-n.
\end{equation}
Note that $\Pi(x,t)$ satisfies the integral equation:
\begin{eqnarray}
\Pi(x,t)= \widetilde{T}(t+n)\Pi(x,-n)+ \int_{-n}^t\widetilde{T}(t-s) \big  [f'({\bf0})\Pi(x,s)+\widetilde{ L} \Pi(x,s)\big ]ds.
\label{eq3.16}
\end{eqnarray}
By the assumption $\rm(A_5)'$, we obtain
\[
\widetilde{Q}^+(u^+(x,t))= f^+(u^+(x,t)) + \widetilde{ L}u^+(x,t)\leq f'({\bf0}) \Pi(x,t) + \widetilde{ L}\Pi(x,t).
\]
It follows from \eqref{eq3.16} that
\begin{eqnarray}
&&\widetilde{T}(t+n)u^+(x,-n)+ \int_{-n}^t\widetilde{T}(t-s)\widetilde{Q}^+(u^+(x,s))ds   \nonumber\\
&&\leq \widetilde{T}(t+n) \Pi(x,-n)+ \int_{-n}^t\widetilde{T}(t-s)[f'({\bf0}) \Pi(x,s)+\widetilde{ L} \Pi(x,s) ]ds  \nonumber\\
&&= \Pi(x,t).  \label{eq3.17}
\end{eqnarray}
Combining \eqref{eq3.15} and \eqref{eq3.17}, \eqref{eq3.12} holds. Therefore, the claim follows
 from Lemma \ref{lem3.1}.

Moreover, $W^n(x,t)$ satisfies the regular estimates as in Lemma \ref{lem2.4}, that is,
 there exists a positive constant $M$,
independent of $n$, such that for any
$x\in \mathbb{R}^N$ and $t>-n+1$,
 \[
\left\|\frac{\partial W^n}{\partial t}(x,t )\right\|,{\ }\left\|\frac{\partial^2 W^n}
{\partial tx_i}(x,t )\right\|,{\ }\left\|\frac{\partial^2 W^n}
{\partial t^2}(x,t )\right\|,{\ }\left\|\frac{\partial W^n}
{\partial x_i}(x,t )\right\|,{\ }\left\|\frac{\partial^2 W^n}
{\partial x_it}(x,t )\right\| \leq M,
 \]
 and
\[
\left\|\frac{\partial^2 W^n}
{\partial x_ix_j}(x,t )\right\|,{\ }\left\|\frac{\partial^3 W^n}
{\partial x_i^2t}(x,t )\right\|,{\ }\left\|\frac{\partial^3 W^n}
{\partial x_i^2x_j}(x,t )\right\|\leq M,{\ }\forall i,j=1,\cdots,N.
 \]
By using the diagonal extraction process, there exists a subsequence
$\{W ^{n_k}(x,t) \}_{k\in \mathbb{N}}$ of $\{W ^{n}(x,t) \}_{n\in \mathbb{N}}$ such that $W^{n_k}(x,t)$
converges to a function $$U(x,t)=\big(U _{1}(x,t),\cdots,U_{m} (x,t)\big)$$
in the sense of topology $\mathcal{T}$.
Clearly, $U(x,t)$ is an entire solution of \eqref{eq1.1}. By virtue of \eqref{eq3.10}, we have
\[
u^-(x,t)\leq U(x,t)\leq \min\big\{ {\bf K}^+,\Pi(x,t)\big\}\text{ for all }(x,t)\in\mathbb{R}^{N+1}.
\]

From \eqref{eq3.9}, it is easy to see that the assertion of part (i) holds.
Note that $ c\lambda_{1}(c)\geq\lambda^*$ for any $c>c_*$, and
$$
\lim_{t\rightarrow-\infty}\Gamma^-(t)e^{-\lambda^*t}=v^*,{\ }\lim_{\xi\rightarrow-\infty}\Phi_{c_i}^-(\xi)e^{-\lambda_1(c_i)\xi}=v(\lambda_1(c_i)),{\ }i=1,\cdots,l.$$
The assertions for parts (ii) and (iii) are direct consequences of \eqref{eq3.9}.
The proof is complete.
\end{proof}

\section{Applications }

\noindent

In this section, we apply our main results developed in Sections 2 and 3 to the models \eqref{eq1.3}--\eqref{eq1.4}.

\subsection{A buffered system }

\noindent

 Consider the buffered system \eqref{eq1.3}. For simplicity, we consider the case $n=1$, i.e.
\begin{equation}
    \left\{
       \begin{array}{ll}
       \partial_tu_1 =d_1 \Delta u_1
              +g(u_1)+k_1(b-v_1)-k_2u_1v_1 ,\\
         \partial_t v_1 =d_2\Delta v_1+
                k_1(b-v_1)-k_2u_1v_1,
        \end{array}
    \right.  \label{eq4.1}
\end{equation}
where $d_1,d_2,k_1,k_2,b$ are positive constants. Our choice of the function $g$ is
the typical monostable
nonlinearity, i.e. $g(u_1)=u_1(1-u_1)$. Let $w_1=u_1$ and $w_2=b-v_1$, then \eqref{eq4.1} can be transformed to
\begin{equation}
    \left\{
       \begin{array}{ll}
       \partial_tw_1 =d_1 \Delta w_1
              +w_1(1-w_1)+k_1w_2-k_2w_1(b-w_2) ,\\
         \partial_t w_2 =d_2\Delta w_2-
                k_1w_2+k_2w_1(b-w_2).
        \end{array}
    \right.  \label{eq4.2}
\end{equation}
System \eqref{eq4.2} has only two equilibria ${\bf 0}=(0,0)$ and ${\bf K}=\big(1, k_2b/(k_2+k_1)\big)$ and is cooperative on $ [{\bf 0},{\bf K}]$.
Let $D= {\rm diag}(d_1,d_2)$, and
$$f(w_1,w_2)=\big(w_1(1-w_1)+k_1w_2-k_2w_1(b-w_2),\  -
                k_1w_2+k_2w_1(b-w_2)\big).$$
\begin{theorem}\label{thm4.1} If
$   d_1\geq d_2,\ 1>k_2b\text{ and }k_1\geq k_2, $
then the conclusions of Theorem \ref{thm2.9}  are valid for \eqref{eq4.2}.
\end{theorem}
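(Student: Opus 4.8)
The plan is to verify that system \eqref{eq4.2}, with $D={\rm diag}(d_1,d_2)$, the vector field $f$ recorded above and ${\bf K}=\big(1,\,k_2b/(k_1+k_2)\big)$, satisfies all of the standing hypotheses $\rm(A_0)$--$\rm(A_3)$ of Section 2, and then to invoke Theorem \ref{thm2.9} verbatim. Hypotheses $\rm(A_0)$--$\rm(A_2)$ are quick: $f$ is a polynomial, hence $C^2$ on $[{\bf 0},{\bf K}]$, and, as already noted, ${\bf 0}$ and ${\bf K}\gg{\bf 0}$ are its only equilibria, so no positive equilibrium lies strictly between them. For $\rm(A_1)$ I would compute
\[ f'({\bf 0})=\begin{pmatrix} 1-k_2b & k_1\\ k_2b & -k_1\end{pmatrix}; \]
its off-diagonal entries $k_1,k_2b$ are positive, so it is cooperative and, being a $2\times2$ matrix with nonzero off-diagonal entries, irreducible, while $\det f'({\bf 0})=-k_1<0$ forces the two (real) eigenvalues to have opposite signs, whence $s(f'({\bf 0}))>0$; thus $\rm(A_1)(a)$ holds and the quantities $c_*$, $\lambda_1(c)$, $M(\lambda)$, $v(\lambda)\gg{\bf 0}$ are at our disposal. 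For $\rm(A_2)$, on $[{\bf 0},{\bf K}]$ one has $\partial_2f_1=k_1+k_2w_1>0$ and $\partial_1f_2=k_2(b-w_2)>0$, the latter because $w_2\leq k_2b/(k_1+k_2)<b$.

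The real content is $\rm(A_3)$. Since $f$, being polynomial, is defined on $[0,+\infty)^2$, Remark \ref{rem2.1} lets me verify $\rm(A_3)^*$ instead, i.e. $f(v)\leq f'({\bf 0})v$ for every $v=\rho_1v(\lambda_1)+\cdots+\rho_kv(\lambda_k)$ with $\rho_i>0$ and $\lambda_i\in[0,\lambda^*]$. A direct computation gives
\[ f(v)-f'({\bf 0})v=\big(\,v_1(k_2v_2-v_1),\ -k_2v_1v_2\,\big), \]
whose second coordinate is $\leq0$ because $v\gg{\bf 0}$; so everything comes down to the scalar inequality $k_2v_2\leq v_1$.

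To establish $k_2v_2\le v_1$ I would track the Perron-eigenvector ratio $r(\lambda):=v_2(\lambda)/v_1(\lambda)>0$. Since, for $v=\sum\rho_iv(\lambda_i)$, the ratio $v_2/v_1$ is a convex combination of the numbers $r(\lambda_i)$, it suffices to prove $r(\lambda)\le 1/k_2$ for all $\lambda\in[0,\lambda^*]$. From $A(\lambda)v(\lambda)=M(\lambda)v(\lambda)$ with $A(\lambda)=D\lambda^2+f'({\bf 0})$, and the explicit larger root $M(\lambda)$ of the quadratic characteristic polynomial, one finds, writing $\alpha:=1-k_2b>0$ and $s:=(d_1-d_2)\lambda^2$,
\[ r(\lambda)=\frac{1}{2k_1}\Big[\,-s-\alpha-k_1+\sqrt{(s+\alpha+k_1)^2+4k_1k_2b}\,\Big]. \]
The bracket is strictly decreasing in $s$, and the hypothesis $d_1\ge d_2$ gives $s\ge0$, so $r(\lambda)\le r(0)$; and $r(0)\le1/k_2$ reduces, after squaring, to $k_2^2b(k_2+1)\le k_2+k_1(k_2+1)$, which holds because $k_2b<1$ gives $k_2^2b<k_2$ while $k_1\ge k_2$ gives $k_2+k_1(k_2+1)\ge 2k_2+k_2^2$. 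Hence $k_2v_2\le v_1$, so $\rm(A_3)^*$, and therefore $\rm(A_3)$, holds, and Theorem \ref{thm2.9} applies to \eqref{eq4.2}. I expect the main obstacle to be exactly this last step: bounding the component ratio of the eigenvectors $v(\lambda)$ uniformly on $[0,\lambda^*]$, which is where all three hypotheses $d_1\ge d_2$, $1>k_2b$ and $k_1\ge k_2$ genuinely enter.
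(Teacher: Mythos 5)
Your proposal is correct and follows essentially the same route as the paper: verify $\rm(A_0)$--$\rm(A_2)$ directly, reduce $\rm(A_3)^*$ to the scalar inequality $z_1\geq k_2z_2$ for positive combinations of the eigenvectors $v(\lambda)$, and obtain it from a uniform bound on the component ratio of $v(\lambda)$ using $d_1\geq d_2$, $k_2b<1$ and $k_1\geq k_2$, then quote Theorem \ref{thm2.9}. The only (immaterial) difference is the final estimate: the paper shows $v_1(\lambda)/v_2(\lambda)>k_1$ by a crude lower bound and then invokes $k_1\geq k_2$, whereas you bound $v_2(\lambda)/v_1(\lambda)\leq 1/k_2$ via monotonicity in $(d_1-d_2)\lambda^2$ and a squaring argument at $\lambda=0$.
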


It is easily seen that
\[f'(0)= \left(
\begin{array}{cc}
1-k_2b &k_1\\
k_2b& -k_1
\end{array}
\right).
\]
Obviously, $f'({\bf0})$ is cooperative and irreducible,
and
$$s(f'({\bf0}))=\frac{1-k_2b- k_1+\sqrt{(1-k_2b- k_1)^2+4k_1}}{2}>0.$$
Hence, the conditions $\rm(A_0)$, $\rm(A_1)(a)$ and  $\rm(A_2)$ hold for \eqref{eq4.2}.
Moreover, for any $\lambda\geq0$,
\[A(\lambda):=D\lambda^2+f'({\bf0})=   \left(
\begin{array}{cc}
d_1\lambda^2+1-k_2b &k_1\\
k_2b& d_2\lambda^2-k_1
\end{array}
\right).
\]
Direct computation shows that
\begin{eqnarray*}
M(\lambda)&=& s(A(\lambda))\\
&=&\frac{d_1\lambda^2+d_2\lambda^2+1-k_2b- k_1+\sqrt{[(d_1-d_2)\lambda^2+1-k_2b+ k_1]^2+4k_2k_1b}}{2}>0,
\end{eqnarray*}
and the
eigenvector $v(\lambda)$ corresponding to $M(\lambda)$ is
\[v(\lambda):=(v_1(\lambda),v_2(\lambda))=\big( M(\lambda)-d_2\lambda^2+k_1 ,k_2b \big)\gg (0,0).
\]
Take  $c_*=\inf_{\lambda>0}\frac{M(\lambda)}{\lambda}$.
Next, we check the condition $\rm(A_3)^*$ (see Remark \ref{rem2.1}).
Note that $   d_1\geq d_2,\ 1>k_2b$ and for any $\lambda\geq0$,
\begin{eqnarray*}
\frac{ v_1(\lambda)}{v_2(\lambda)}&=&  \frac{M(\lambda)-d_2\lambda^2+k_1   }{k_2b}\\
&=&  \frac{ 1}{2k_2b}\left[( d_1-d_2)\lambda^2+1-k_2b+ k_1+\sqrt{[(d_1-d_2)\lambda^2+1-k_2b+ k_1]^2+4k_2k_1b}  \right]\\
&>& \frac{ 1}{2}\left[1-k_2b+ k_1+\sqrt{[1-k_2b+ k_1]^2+4k_2k_1b}  \right]
\geq k_1.
\end{eqnarray*}
For any  $k\in\mathbb{Z}^+$, $\rho_1,\cdots,\rho_k>0$ and $\lambda_1,\cdots, \lambda_k\in[0, \lambda^*]$, denote
\[(z_1,z_2):=\big(\rho_1 v_1(\lambda_1)+\cdots + \rho_k v_1(\lambda_k),\rho_1 v_2(\lambda_1)+\cdots + \rho_k v_2(\lambda_k)\big)\gg(0,0).\]
Consequently, $\rm(A_3)^*$ is equivalent to the following two inequalities
\begin{eqnarray*}
&&z_1(1-z_1)+k_1z_2-k_2z_1(b-z_2) \leq  (1-k_2b)z_1+k_1z_2,  \\
&&  -
                k_1z_2+k_2z_1(b-z_2)\leq k_2b z_1- k_1z_2
\end{eqnarray*}
or
\begin{equation}
z_1 \geq k_2z_2 \text{ and} -k_2z_1z_2\leq 0.
\label{eq4.3}
\end{equation}
Since $ \frac{ v_1(\lambda)}{v_2(\lambda)}\geq k_1$ for any $\lambda\geq0$, we have $z_1/ z_2\geq k_1$. Therefore, \eqref{eq4.3} holds if $k_1\geq k_2 $.

\subsection{An epidemic model }

\noindent

Consider the epidemic model \eqref{eq1.5}. Scaling time and absorbing the appropriate constants into $u_2$,
system  \eqref{eq1.5} can be rewritten as
\begin{equation}
    \left\{
       \begin{array}{ll}
        \partial_t u_1(x,t)=\tilde{d}_1 \Delta u_1(x,t)
               -u_1(x,t)+\gamma u_2(x,t),\\
         \partial_t u_2(x,t)=\tilde{d}_2\Delta u_2(x,t)
                -\beta u_2(x,t)+g(u_1(x,t)),
        \end{array}
    \right.  \label{eq4.4}
\end{equation}
where $\tilde{d}_1=d_1/a_{11}>0$, $\tilde{d}_2=d_2/a_{11}^2>0$, $\gamma=a_{12}/a_{11}^2>0$ and $\beta=a_{22}/a_{11}>0$. For convenience, we denote $\tilde{d}_i$ by $d_i$, $i=1,2.$

 We assume
\begin{description}
\item[$\rm( H_1)$]
$g\in C^2([0,+\infty),[0,+\infty))$, $g(0)=g(k)-\frac{\beta }{\gamma}k=0$, $g(u)>\frac{\beta }{\gamma}u$ for $u\in(0,k)$, and $g(u)\leq g'(0)u$ for $u\in[0,k]$, where  $k>0$ is a constant.
\item[$\rm( H_2)$] One of the following holds:
\begin{description}
\item[$\rm( a)$]  $g(u)$ is increasing for $u>0$;
\item[$\rm( b)$]  There exists a number $u_{\rm max}>0$ such that $g(u)$ is increasing for $0<u\leq u_{\rm max}$ and decreasing for $u>u_{\rm max}$.
\end{description}
\end{description}

Let ${\bf K}= (k, g(k)/\beta)$, $D= {\rm diag}(d_1,d_2)$, and
$$f(u_1,u_2)=\big(-u_1+\gamma u_2,  -\beta u_2+g(u_1)\big).$$
Clearly, $f({\bf0})=f({\bf K})={\bf0}$ and
\[f'(0)= \left(
\begin{array}{cc}
-1&\gamma\\
g'(0)& -\beta
\end{array}
\right).
\]
From $\rm( H_1)$, we see $g'(0)>\frac{\beta}{\gamma}>0$. It is easy to see that $f(u)\leq f'({\bf0})u$ for $u\in [{\bf 0},{\bf K}]$,
$f'({\bf0})$ is cooperative and irreducible,
and
$$s(f'({\bf0}))=\frac{-(\beta+1)+\sqrt{(\beta+1)^2+4(\gamma g'(0)-\beta)}}{2}>0.$$
Thus, the conditions $\rm(A_0)$ and $\rm(A_1)(a)$ hold for \eqref{eq4.4}.
Furthermore, for any $\lambda\geq0$,
\[A(\lambda):=D\lambda^2+f'({\bf0})=  \left(
\begin{array}{cc}
d_1\lambda^2-1&\gamma\\
g'(0)& d_2\lambda^2-\beta
\end{array}
\right)
\]
and
\[
M(\lambda)=s(A(\lambda))=\frac{d_1\lambda^2+d_2\lambda^2-\beta-1+\sqrt{[(d_1\lambda^2-1)-(d_2\lambda^2-\beta)]^2+4\gamma g'(0)}}{2}>0.
\]
 Clearly, $\inf_{\lambda>0}\frac{M(\lambda)}{\lambda}$ exists and denote by $c_*.$

 \begin{theorem}\label{thm4.2} Assume   $\rm( H_1)$. The following statements hold:\\
{\rm (i)} If $\rm( H_2)(a)$ or $\rm( H_2)(b)$ holds and $k\leq u_{\rm max}$, then the conclusions of Theorem \ref{thm2.9}  are valid for \eqref{eq4.4}. If, in addition, $g'(u)\leq g'(0)$ for $u\in[0,k]$, then the conclusions of Theorem \ref{thm2.10}  hold true for (\ref{eq4.4}).\\
{\rm (ii)} If $\rm( H_2)(b)$ holds and $ k>u_{\rm max}$, then the conclusions of Theorem \ref{thm3.5}  hold for (\ref{eq4.4}).
\end{theorem}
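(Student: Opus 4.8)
The plan is to derive Theorem~\ref{thm4.2} from the abstract Theorems~\ref{thm2.9}, \ref{thm2.10} and \ref{thm3.5} by checking their structural hypotheses for the rescaled system~\eqref{eq4.4}, in which $f(u_1,u_2)=\big(-u_1+\gamma u_2,\,-\beta u_2+g(u_1)\big)$. The discussion preceding the theorem already establishes $\rm(A_0)$ and $\rm(A_1)(a)$ --- via the explicit forms of $f'({\bf 0})$, $A(\lambda)$, $M(\lambda)$ and the positive eigenvector $v(\lambda)\gg{\bf 0}$ --- together with the bound $f(u)\le f'({\bf 0})u$ on $[{\bf 0},{\bf K}]$. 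Hence in the cooperative regime of part~(i) only $\rm(A_2)$ and $\rm(A_3)$ remain.

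For part~(i): the off-diagonal entries of $f'(u)$ are $\partial_2f_1(u)=\gamma>0$ and $\partial_1f_2(u)=g'(u_1)$. Under $\rm(H_2)(a)$ one has $g'\ge0$ on $[0,+\infty)$; under $\rm(H_2)(b)$ with $k\le u_{\rm max}$ one has $[0,k]\subseteq[0,u_{\rm max}]$, hence $g'\ge0$ on $[0,k]$. So $\rm(A_2)$ holds in either case, and since $f(u)\le f'({\bf 0})u$ on $[{\bf 0},{\bf K}]$, Remark~\ref{rem2.1} gives $\rm(A_3)$; Theorem~\ref{thm2.9} then applies to \eqref{eq4.4}. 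For the supplementary claim, the only non-constant entry of $f'(u)$ is $(f'(u))_{21}=g'(u_1)$, so the hypothesis $g'(u)\le g'(0)$ on $[0,k]$ is exactly $f'(u)\le f'({\bf 0})$ on $[{\bf 0},{\bf K}]$, which is the extra condition of Theorem~\ref{thm2.10}.

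For part~(ii): here $k>u_{\rm max}$, so $g$ is non-monotone on $[0,k]$ and \eqref{eq4.4} is non-cooperative. I would modify only $g$, setting
\[ g^+(u):=g\big(\min\{u,u_{\rm max}\}\big),\qquad g^-(u):=\min_{u\le s\le K_1^+}g(s),\qquad K_1^+:=\gamma g(u_{\rm max})/\beta, \]
and then $f^\pm(u_1,u_2)=\big(-u_1+\gamma u_2,\,-\beta u_2+g^\pm(u_1)\big)$, ${\bf K}^+:=\big(K_1^+,\,g(u_{\rm max})/\beta\big)$, and ${\bf K}^-:=\big(k^-,\,k^-/\gamma\big)$ with $k^-$ the smallest positive root of $g^-(u)=(\beta/\gamma)u$. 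One then verifies $\rm(A_2)'$--$\rm(A_5)'$: the $g^\pm$ are continuous, piecewise $C^2$ and nondecreasing, with $g^-\le g\le g^+$ on $[0,K_1^+]$ and $f^\pm({\bf 0})=f^+({\bf K}^+)=f^-({\bf K}^-)={\bf 0}$; using $g(u)>(\beta/\gamma)u$ on $(0,k)$ and the up-then-down shape of $g$, one gets ${\bf 0}\ll{\bf K}^-\le{\bf K}\le{\bf K}^+$ and that $f^\pm$ have no positive equilibrium strictly between ${\bf 0}$ and ${\bf K}^\pm$; since $g^\pm$ coincide with $g$ near the origin, $(g^\pm)'(0)=g'(0)$, so $f^\pm$ and $f$ share the Jacobian $f'({\bf 0})$ at ${\bf 0}$; $\partial_1f_2^\pm=(g^\pm)'\ge0$ makes $f^\pm$ cooperative on $[{\bf 0},{\bf K}^+]$; and $f^+(u)\le f'({\bf 0})u$ (first component linear, second bounded above by $-\beta u_2+g'(0)u_1$) yields $\rm(A_5)'$ through Remark~\ref{rem3.1}. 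Theorem~\ref{thm3.5} then applies to \eqref{eq4.4}.

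The main obstacle is the construction of $g^-$ in part~(ii): it must be at once a genuine nondecreasing piecewise-$C^2$ minorant of $g$ on $[{\bf 0},{\bf K}^+]$, first-order tangent to $g$ at the origin, and such that $f^-$ has a single positive equilibrium ${\bf K}^-$ with ${\bf K}^-\le{\bf K}$. The inequality ${\bf K}^-\le{\bf K}$ is the delicate point; it follows from $g(K_1^+)<g(k)$ --- valid since $u_{\rm max}<k<K_1^+$ and $g$ is decreasing on $[u_{\rm max},+\infty)$ --- which forces $k^-=\gamma g(K_1^+)/\beta<k$. One should also note that $g$ must stay positive on $(0,K_1^+]$ for ${\bf K}^-$ to be a genuine positive equilibrium; this is automatic for the Ricker-type nonlinearities relevant to \eqref{eq1.5} and can otherwise be added as a standing hypothesis. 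Once these points are settled, parts~(i)--(iii) of Theorem~\ref{thm3.5} transfer to \eqref{eq4.4} verbatim.
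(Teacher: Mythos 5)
Your proposal is correct and follows essentially the same route as the paper: part (i) is verified exactly as in the text (cooperativity from the monotonicity of $g$ on $[0,k]$, $\rm(A_3)$ via $f(u)\le f'({\bf 0})u$ and Remark \ref{rem2.1}, and $g'(u)\le g'(0)$ giving $f'(u)\le f'({\bf 0})$ for Theorem \ref{thm2.10}), while in part (ii) your $g^+(u)=g(\min\{u,u_{\max}\})$ and $g^-(u)=\min_{u\le s\le K_1^+}g(s)$ coincide with the paper's truncations of $g$ at $u_{\max}$ and at $u_{\min}$ (where $g(u_{\min})=g(K_1^+)$), with the same ${\bf K}^\pm$ and the same verification of $\rm(A_2)'$--$\rm(A_5)'$ via $g^+(u_1)\le g'(0)u_1$ and Remark \ref{rem3.1}. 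Your explicit check that $k^-=\gamma g(K_1^+)/\beta<k$ and your remark on the positivity of $g$ on $(0,K_1^+]$ merely fill in details the paper leaves to the reader, so the two arguments are essentially identical.
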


If $\rm( H_1)$ and $\rm( H_2)(a)$ or $\rm( H_2)(b)$ hold and $k\leq u_{\rm max}$, then system \eqref{eq4.4} is cooperative on $[{\bf 0},{\bf K}]$. It is easy to verify that $\rm(A_2)$--$\rm(A_3)$  hold. If, in addition, $g'(u)\leq g'(0)$ for $u\in[0,k]$, then $f'(u)\leq f'({\bf0})$ for $u\in [{\bf 0},{\bf K}]$. Therefore, the statement ${\rm (i)}$ of Theorem \ref{thm4.3} holds true.

When $\rm( H_1)$, $\rm( H_2)(b)$ hold and $ k>u_{\rm max}$,  system \eqref{eq4.4} is non-cooperative on $[{\bf 0},{\bf K}]$. Take
 $$u_{\rm min}=\inf\Big\{u\in(0,u_{\rm max}]\Big|g(u)= g\Big(\frac{\gamma}{\beta}g(u_{\rm max})\Big)\Big\}.$$
Clearly, $u_{\rm min}>0$. We define two functions $f^\pm(u)$ as follows:
\[f^\pm(u)=\big(-u_1+\gamma u_2,  -\beta u_2+g^\pm(u_1)\big),\]
where
\[g^+(u_1)= \left\{
\begin{array}{lll}
g(u_1),&u_1\in\big[0,u_{\rm max}\big],\\
g(u_{\rm max}),& u_1\in\big[u_{\rm max},  \frac{\gamma}{\beta}g(u_{\rm max})\big]
\end{array}
\right.
\]
and
\[g^-(u_1)= \left\{
\begin{array}{lll}
g(u_1),&u_1\in\big[0,u_{\rm min}\big],\\
g\big(u_{\rm min}\big),& u_1\in\big[u_{\rm min},  \frac{\gamma}{\beta}g(u_{\rm max})\big].
\end{array}
\right.
\]
Clearly,  $g^+(u_1)\leq g'(0)u_1$ for $u_1\in\big[0,  \frac{\gamma}{\beta}g(u_{\rm max})\big]$. Hence, $f^+(u)\leq f'({\bf0})u$ for $u\in [{\bf 0},{\bf K^+}]$ which yields that $\rm(A_5)'$ holds.
One can further check the conditions $\rm(A_2)'$--$\rm(A_4)'$ with ${\bf K}= \big(k, g(k)/\beta\big)$,
$${\bf K}^+= \Big(\frac{\gamma}{\beta}g(u_{\rm max}),g(u_{\rm max})\Big)\text{ and }
{\bf K}^-= \Big(\frac{\gamma}{\beta}g\big(u_{\rm min}\big),g\big(u_{\rm min}\big)\Big).$$
Therefore,  the statement ${\rm (ii)}$ of Theorem \ref{thm4.3} holds true.

We remark that two specific functions
\[
g_1(u)=\frac{\omega u }{1+\nu u}\text{ and }g_2(u)=\frac{\omega u }{1+\nu u^2},
\]
which have been widely used in the mathematical biology literature, satisfies the above conditions for a wide range
of parameters $\omega$ and $\nu$. In fact, we have the following statements:
\\
(a) if $\omega\gamma>\beta$, then the function
$$f(u_1,u_2)=\big(-u_1+\gamma u_2,  -\beta u_2+g_1(u_1)\big)$$
 satisfies the conditions $\rm(H_1)$ and $\rm( H_2)(a)$ with $k=\frac{\omega\gamma-\beta}{\beta\nu}$; \\
(b) if $\omega\gamma>\beta$, then the function
$$f(u_1,u_2)=\big(-u_1+\gamma u_2,  -\beta u_2+g_2(u_1)\big)$$
 satisfies the conditions $\rm(H_1)$ and $\rm( H_2)(b)$ with $$k=\sqrt{\frac{\omega\gamma-\beta}{\beta\nu}}\text{ and }u_{\rm max}=\sqrt{\frac{1}{\nu}}.$$
Furthermore, it is easy to see that if $ \omega\gamma\leq 2\beta$, then $k\leq u_{\rm max}$, and if $ \omega\gamma>2\beta$, then $k>u_{\rm max}$.

\subsection{A population model }

\noindent

Consider the model \eqref{eq1.4} by taking the
non-monotone Ricker function $u_1e^{-u_1}$ as $h(u_1)$.
Let $w_1=u_1$ and $w_2=u_2-1$, then \eqref{eq1.4} reduces to
\begin{equation}\label{eq4.5}
\left\{
\begin{array}{l}
\partial_tw_1 =d_1 \Delta w_1 +  w_1(r_1-\alpha-\delta w_1+r_1w_2),\\
\partial_tw_2 =d_2 \Delta w_2  +r_2(1+w_2)[-w_2 +h(w_1)],
\end{array}\right.
\end{equation}
where $h(w_1)=w_1e^{-w_1}$ and $d_1,d_2,r_1,r_2,\alpha,\delta$ are all positive parameters. Similar to \cite{Wang}, we assume
\begin{equation}\label{eq4.6}
r_1>\alpha,\ d_1\geq d_2\text{ and }\delta\geq\frac{r_1r_2}{r_1+r_2-\alpha}.
\end{equation}
In the nonnegative quadrant, \eqref{eq4.5} has only two equilibrium ${\bf 0}=(0,0)$ and ${\bf K}=(K_1,K_2)$ which satisfy
\begin{equation}\label{eq4.7}
\begin{array}{l}
r_1K_1e^{-K_1}=\delta K_1+\alpha-r_1\text{ and }
K_2 =K_1e^{-K_1}.
\end{array}
\end{equation}

Let $D= {\rm diag}(d_1,d_2)$ and
$$f(w)=\big(w_1(r_1-\alpha-\delta w_1+r_1w_2), \  r_2(1+w_2)[-w_2 +w_1e^{-w_1}]\big).$$
 For any $\lambda\geq0$,
\[A(\lambda):=D\lambda^2+f'({\bf0})=  \left(
\begin{array}{cc}
d_1\lambda^2+r_1-\alpha&0\\
r_2& d_2\lambda^2-r_2
\end{array}
\right).
\]
Direct computation shows that
$
M(\lambda)=d_1\lambda^2+r_1-\alpha>0
$
and the
eigenvector $v(\lambda)$ corresponding to $M(\lambda)$ is
\[v(\lambda):=(v_1(\lambda),v_2(\lambda))=\big((d_1-d_2)\lambda^2+r_1+r_2-\alpha,\ r_2\big)\gg (0,0).
\]
Hence, the conditions $\rm(A_0)$ and $\rm(A_1)(b)$  hold for \eqref{eq4.5}. Take $c_*=\inf_{\lambda>0}\frac{M(\lambda)}{\lambda}.$
Note that $h(w_1)=w_1e^{-w_1}$ achieves its
maximum at $ h_m = 1$, and is increasing on $[0, h_m]$ and decreasing on $[h_m,+\infty)$.

\begin{theorem}\label{thm4.3} Assume  \eqref{eq4.6}. The following statements hold:\\
{\rm (i)} If $K_1\leq1$, then the conclusions of Theorem \ref{thm2.9}  are valid for (\ref{eq4.5}). \\
{\rm (ii)} If $K_1>1$, then the conclusions of Theorem \ref{thm3.5}  hold true for (\ref{eq4.5}).
\end{theorem}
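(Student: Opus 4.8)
The plan is to verify, in each case, the structural hypotheses under which the abstract results of Sections 2 and 3 were established; the stated conclusions then follow verbatim. Conditions $\rm(A_0)$ and $\rm(A_1)(b)$ have already been checked for \eqref{eq4.5} above (two equilibria ${\bf 0}\ll{\bf K}$, block lower triangular $A(\lambda)$ with principal eigenvalue $M(\lambda)=d_1\lambda^2+r_1-\alpha$ and eigenvector $v(\lambda)$). Hence for part (i) it remains to establish $\rm(A_2)$ and $\rm(A_3)$, while for part (ii) it remains to produce auxiliary cooperative nonlinearities $f^\pm$ satisfying $\rm(A_2)'$--$\rm(A_5)'$.

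For part (i), note that $\partial_{w_1}f_2(w)=r_2(1+w_2)e^{-w_1}(1-w_1)$ and $\partial_{w_2}f_1(w)=r_1w_1$ are both nonnegative on $[{\bf 0},{\bf K}]$ precisely because $K_1\leq1$; this is $\rm(A_2)$. Since $f$ extends to a $C^2$ map on $[0,+\infty)^2$, by Remark \ref{rem2.1} it suffices to verify $\rm(A_3)^*$. Writing $z=(z_1,z_2)=\rho_1v(\lambda_1)+\cdots+\rho_kv(\lambda_k)\gg{\bf 0}$ and using $f'({\bf0})z=\big((r_1-\alpha)z_1,\ r_2z_1-r_2z_2\big)$, the inequality $f(z)\leq f'({\bf0})z$ splits into $r_1z_2\leq\delta z_1$ and $z_1e^{-z_1}(1+z_2)\leq z_1+z_2^2$. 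The first holds because $v_1(\lambda)/v_2(\lambda)=[(d_1-d_2)\lambda^2+r_1+r_2-\alpha]/r_2\geq(r_1+r_2-\alpha)/r_2$ (using $d_1\geq d_2$), so $z_1/z_2\geq(r_1+r_2-\alpha)/r_2\geq r_1/\delta$ by \eqref{eq4.6}. The second is an elementary one-variable estimate: it is equivalent to $z_1(1-e^{-z_1})\geq z_2(z_1e^{-z_1}-z_2)$, which is trivial when $z_2\geq z_1e^{-z_1}$ and otherwise follows from $z_2(z_1e^{-z_1}-z_2)<z_1^2e^{-2z_1}$ together with $z_1e^{-2z_1}\leq1-e^{-z_1}$ (equivalently $e^{w}(e^{w}-1)\geq w$ for $w\geq0$). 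With $\rm(A_0)$--$\rm(A_3)$ in hand, Theorem \ref{thm2.9} applies; note that $f'(u)\leq f'({\bf0})$ fails in general on $[{\bf 0},{\bf K}]$, which is why the conclusions of Theorem \ref{thm2.10} are not claimed here.

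For part (ii), since $h(w_1)=w_1e^{-w_1}$ is unimodal with peak at $w_1=1$ and $K_1>1$, the system is non-cooperative on $[{\bf 0},{\bf K}]$, so I would copy the construction used for the epidemic model \eqref{eq4.4}: set $h^+(w_1)=h(\min\{w_1,1\})$ and $h^-(w_1)=h(\min\{w_1,w_{\min}\})$, where $w_{\min}\in(0,1)$ is the unique point with $h(w_{\min})=h(K_1^+)$, and put $f^\pm(w)=\big(w_1(r_1-\alpha-\delta w_1+r_1w_2),\ r_2(1+w_2)(-w_2+h^\pm(w_1))\big)$. Take ${\bf K}^+=(K_1^+,1/e)$ with $\delta K_1^+=r_1-\alpha+r_1/e$ and ${\bf K}^-=(K_1^-,h(w_{\min}))$ with $\delta K_1^-=r_1-\alpha+r_1h(w_{\min})$. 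Then $\rm(A_4)'$ is immediate since $(h^\pm)'\geq0$ by construction, $f^-\leq f\leq f^+$ follows from $h^-\leq h\leq h^+$ on $[{\bf 0},{\bf K}^+]$ (using the monotonicity profile of $h$ and $h(w_{\min})=h(K_1^+)\leq h(K_1)$), and the ordering ${\bf 0}\ll{\bf K}^-\leq{\bf K}\leq{\bf K}^+$ together with $\rm(A_3)'$ follows from the sign fact that $\phi(w_1):=r_1-\alpha-\delta w_1+r_1h(w_1)$ is positive on $(0,K_1)$, in particular $\phi(1)>0$ and $\phi(w_{\min})>0$: on the ranges where $h^\pm$ is unmodified the equilibrium equation coincides with $\phi(w_1)=0$, which has no root below $K_1>1$, and on the saturated range it is affine in $w_1$. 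Finally $\rm(A_5)'$ is checked through $\rm(A_5)^*$ exactly as in part (i): the first scalar inequality is unchanged, and the second becomes $h^+(z_1)(1+z_2)\leq z_1+z_2^2$, which holds because $h^+(z_1)\leq z_1$ and $h^+(z_1)\leq1/e$. Theorem \ref{thm3.5} then gives (ii).

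The main obstacle I anticipate is the bookkeeping in part (ii): one must choose ${\bf K}^\pm$ so that both the ordering ${\bf K}^-\leq{\bf K}\leq{\bf K}^+$ and the absence of intermediate positive equilibria hold, and this rests on the sign information $\phi(1)>0$ extracted from $K_1>1$ and on $K_1\leq K_1^+$ coming from $h(K_1)\leq1/e$; in part (i) the only slightly delicate point is the scalar estimate $z_1e^{-z_1}(1+z_2)\leq z_1+z_2^2$, elementary but not entirely obvious.
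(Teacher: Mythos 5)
Your proposal follows essentially the same route as the paper: for $K_1\le 1$ you check cooperativity on $[{\bf 0},{\bf K}]$ and reduce $\rm(A_3)^*$ to the two scalar inequalities $\delta z_1\ge r_1z_2$ and $e^{z_1}(z_1+z_2^2)\ge z_1(1+z_2)$ using $z_1/z_2\ge (r_1+r_2-\alpha)/r_2$ and \eqref{eq4.6} (your elementary case-split proof of the second inequality differs only cosmetically from the paper's sum-of-squares argument $z_1z_2^2+(z_1-\tfrac12 z_2)^2+\tfrac34 z_2^2\ge 0$); for $K_1>1$ you use exactly the paper's truncation construction, with your $w_{\min}$ playing the role of the paper's $h_0$ and your ${\bf K}^\pm$ the genuine positive equilibria of $f^\pm$, and verify $\rm(A_2)'$--$\rm(A_5)'$ as in the paper.

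One step is under-justified: in checking $\rm(A_5)^*$ you assert $h^+(z_1)(1+z_2)\le z_1+z_2^2$ ``because $h^+(z_1)\le z_1$ and $h^+(z_1)\le 1/e$.'' These two bounds alone do not imply the inequality: an abstract value $v$ with $v\le\min\{z_1,1/e\}$ can violate it, e.g. $v=z_1=0.3$, $z_2=0.1$ gives $v(1+z_2)=0.33>0.31=z_1+z_2^2$, and the ratio constraint on $z_1/z_2$ does not rule this configuration out. The inequality is nevertheless true, and the repair is precisely the paper's two-case argument, which you in effect already have: for $z_1\le 1$ one has $h^+(z_1)=z_1e^{-z_1}$ and the claim is exactly your part (i) inequality \eqref{eq4.11}, while for $z_1>1$ one has $h^+(z_1)=e^{-1}$ and it suffices that $e(z_1+z_2^2)\ge 1+z_2$, which follows from $2(1+z_2^2)\ge 1+z_2$. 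With that sentence replaced, the proof is complete and matches the paper's.
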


When $K_1\leq1$, system (\ref{eq4.5}) is a cooperative system on $[{\bf 0},{\bf K}]$, i.e., $\rm(A_2)$ holds.
We need to check the condition $\rm(A_3)^*$ (see Remark \ref{rem2.1}). For any  $k\in\mathbb{Z}^+$, $\rho_1,\cdots,\rho_k>0$ and $\lambda_1,\cdots, \lambda_k\in[0, \lambda^*]$, denote
\[(z_1,z_2):=\big(\rho_1 v_1(\lambda_1)+\cdots + \rho_k v_1(\lambda_k),\rho_1 v_2(\lambda_1)+\cdots + \rho_k v_2(\lambda_k)\big)\gg(0,0).\]
Consequently, $\rm(A_3)^*$ is equivalent to the following two inequalities
\begin{eqnarray}
&&z_1[r_1-\alpha-\delta z_1+r_1z_2]\leq (r_1-\alpha)z_1,  \label{eq4.8} \\
&&  r_2(1+z_2)\big(-z_2 +z_1e^{-z_1}\big)\leq r_2(z_1-z_2)
 \label{eq4.9}
\end{eqnarray}
or
\begin{eqnarray}
&&\delta z_1\geq r_1z_2 ,   \label{eq4.10} \\
&&e^{z_1}(z_1+z_2^2)\geq z_1(1+z_2).  \label{eq4.11}
\end{eqnarray}
Since for any $\lambda\geq0$,
\[
\frac{v_1(\lambda)}{v_2(\lambda)}= \frac{ (d_1-d_2)\lambda^2+r_1+r_2-\alpha  }{r_2} \geq  \frac{ r_1+r_2-\alpha  }{r_2} ,
\]
we have $$\frac{z_1}{z_2}\geq \frac{r_1+r_2-\alpha }{r_2}.$$
Note also that $z_1>0$ and $e^{z_1}>1+z_1$.
Thus, the following two equalities suffice to verify (\ref{eq4.10}) and (\ref{eq4.11}):
\[
\delta\frac{r_1+r_2-\alpha }{r_2} \geq r_1\text{ and }z_1z_2^2+\big(z_1-\frac{1}{2}z_2\big)^2+\frac{3}{4}z_2^2\geq0,
\]
which are true provided that (\ref{eq4.6}) holds. 

If $K_1>1$, system (\ref{eq4.5}) is non-cooperative on $[{\bf 0},{\bf K}]$. Similar to \cite{Wang,Weinberger}, we define two functions $f^\pm(u)$ as follows:
\[f^\pm(w)=\big(w_1(r_1-\alpha-\delta w_1+r_1w_2),  r_2(1+w_2)[-w_2 +h^\pm(w_1)]\big),\]
where
\[h^+(w_1)= \left\{
\begin{array}{lll}
w_1e^{-w_1},&w_1\in[0,1],\\
e^{-1},&w_1>1,
\end{array}
\right.
\]
and
\[h^-(w_1)= \left\{
\begin{array}{lll}
w_1e^{-w_1},&w_1\in[0,h_0],\\
K_1^+e^{-K_1^+},& w_1>h_0.
\end{array}
\right.
\]
Here $K_1^+>K_1$ and $h_0\in(0,1]$ are the unique roots of the equations
\[ \delta K_1^++\alpha-r_1-r_1 h^+(K_1^+)=0\text{ and }h_0e^{-h_0}- K_1^+e^{-K_1^+}=0,  \]
respectively. It is easy to verify that $\rm(A_2)'$--$\rm(A_4)'$ hold with ${\bf K}= \big(K_1,K_1e^{-K_1}\big)$ and
${\bf K}^\pm= \big(K_1^\pm,K_1^\pm e^{-K_1^\pm}\big)$, where $K^-_1\in(0,K_1)$ is the unique root of the equation
\[ \delta K_1^-+\alpha-r_1-r_1 h^-(K_1^-)=0. \]
Next, we check the condition $\rm(A_5)^*$ (see Remark \ref{rem3.1}). Let
\[(z_1,z_2):=\big(\rho_1 v_1(\lambda_1)+\cdots + \rho_k v_1(\lambda_k),\rho_1 v_2(\lambda_1)+\cdots + \rho_k v_2(\lambda_k)\big)\gg(0,0).\]
Consequently, $\rm(A_5)^*$ is equivalent to the following two inequalities
\begin{eqnarray}
&&z_1[r_1-\alpha-\delta z_1+r_1z_2]\leq (r_1-\alpha)z_1, \label{eq4.12} \\
&& r_2(1+z_2)\big(-z_2 +h^+(z_1)\big)\leq r_2(z_1-z_2). \label{eq4.13}
\end{eqnarray}
 Note that \eqref{eq4.8} and \eqref{eq4.9} hold and $h^+(z_1)=z_1e^{-z_1}$ for $z_1\in(0,1]$. To verify the above two inequalities, we only need to show \eqref{eq4.13} holds for $z_1>1$, i.e.,
\[
(1+z_2)(-z_2 +e^{-1})\leq z_1-z_2,
\]
that is,
\[
e(z_1+z_2^2)\geq 1+z_2\text{ for }z_1>1.
\]
It suffices to show that
\[
2(1+z_2^2)\geq 1+z_2,
\]
which holds obviously.
\section{Conclusion and discussion}

\noindent

In this paper, we consider the front-like entire solutions of $m$-dimensional monostable reaction-diffusion systems in $\mathbb{R}^N$.
In the cooperative case, the existence and qualitative properties of entire solutions are established using comparison principle. In the non-cooperative case,
the existence of entire solutions is proved by
citing two auxiliary cooperative systems and establishing some comparison arguments for the three systems.
Uniqueness and stability of entire solutions of such systems seem to
be very interesting and challenging problems.
Besides, the issue of entire solutions of general bistable reaction-diffusion systems remains an open problem.

We mention that the assumption $(d_1,\cdots,d_m)\gg{\bf0}:=(0,\cdots,0)\in\mathbb{R}^m$ (i.e. \eqref{eq1.1} is non-degenerate) is crucial for our main results. When some but not all diffusion coefficients are zero (i.e. \eqref{eq1.1} is partially degenerate), system \eqref{eq1.1} has weak regularity and compactness. For example,
if $d_i=0$ for some $i\in\{1,\cdots,m\}$, then $u_i$ is not smooth enough with respect to $x$ due to zero diffusion coefficient and hence the prior estimate for $u_i$ is not valid (see Lemma \ref{lem2.5}). Recently, in \cite{wu}, we considered the entire solution of the  reaction-diffusion system modeling
man-environment-man epidemics with bistable nonlinearity:
\begin{equation}
    \left\{
       \begin{array}{ll}
        \frac{\partial u(x,t)}{\partial t}= d\frac{\partial^2 u(x,t)}{\partial x^2}
             -u(x,t)+\alpha v(x,t),\\
         \frac{\partial v(x,t)}{\partial t}=
                -\beta v(x,t)+g(u(x,t)).
        \end{array}
    \right.  \label{eq5.2}
\end{equation}
To obtain the entire solution, we established the following prior estimate of solutions of (\ref{eq5.2}), see  \cite[Theorem 3.3]{wu}.
\begin{proposition}\label{Pro5.1}
Suppose that $w(x,t)=(u(x,t),v(x,t))$ is a solution of (\ref{eq5.2}) with
initial value $\varphi\in
[{\bf{0}},{\bf K}]_X$, then there exists a positive constant $M  > 0$ such that for any $\varphi\in
[{\bf{0}},{\bf K}]_X$, $x\in\mathbb{R}$ and $t>1$,
\[
\left|u_t(x, t)\right|\leq M ,{\ }\left|u_{tt}(x, t)\right|\leq M ,{\ }\left|u_{tx}(x, t)\right|\leq M ,{\ }\left|u_x(x, t)\right|\leq M ,
\]
\[\left|u_{xt}(x, t)\right|\leq M ,{\ }\left|u_{xx}(x, t)\right|\leq M ,{\ }\left|u_{xxx}(x, t)\right|\leq M ,\left|u_{xxt}(x, t)\right|\leq M ,
\]
\[\left|v_{t}(x, t)\right|\leq M ,{\ }\left|v_{x}(x, t)\right|\leq M ,{\ }\left|v_{tt}(x, t)\right|\leq M . \]
\end{proposition}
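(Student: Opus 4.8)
The plan is to handle the two components of \eqref{eq5.2} separately according to their nature: the $u$–equation is uniformly parabolic with a bounded lower–order forcing, so it smooths in $x$ and $t$, whereas for each frozen $x$ the $v$–equation is a scalar linear ODE driven by $g(u)$, which only transmits whatever regularity $u$ (and the initial datum) already has. Throughout, the constant $M$ is understood to absorb the structural quantities $d,\alpha,\beta$, $\|g\|_{C^2([0,K_1])}$ and the bounds $K_1,K_2$, and nothing else.

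First I would record the $L^\infty$ bounds. Since $\varphi\in[{\bf 0},{\bf K}]_X$ and ${\bf 0},{\bf K}$ are equilibria of \eqref{eq5.2}, the order interval $[{\bf 0},{\bf K}]$ is invariant, so ${\bf 0}\le(u(x,t),v(x,t))\le{\bf K}$ for all $x$ and $t\ge\tau$; in particular $g(u)$ is bounded, and the forcing $F(x,t):=-u(x,t)+\alpha v(x,t)$ in $u_t=d\,u_{xx}+F$ satisfies $\|F\|_\infty\le K_1+\alpha K_2$. The bound $|v_t|\le\beta K_2+\max_{[0,K_1]}|g|$ is then immediate from $v_t=-\beta v+g(u)$.

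Next I would run a parabolic bootstrap for $u$, in the spirit of Lemma \ref{lem2.5}. Writing $u$ by Duhamel's formula against the one–dimensional Gaussian kernel (with the exponential weight coming from the $-u$ term) and letting one spatial derivative fall on the kernel — whose $x$–derivative has an integrable $t^{-1/2}$ singularity — already gives, for $t>\tau+1$, a bound $|u_x|\le M$ and uniform Hölder continuity of $x\mapsto u(x,t)$, with a constant independent of $\tau$ and $\varphi$ since only $\|\varphi_1\|_\infty\le K_1$ and $\|v\|_\infty\le K_2$ enter. Feeding this back, $g(u(\cdot,s))$ is uniformly Hölder in $x$, hence so is
\[
v(\cdot,t)=e^{-\beta(t-\tau)}v(\cdot,\tau)+\int_\tau^t e^{-\beta(t-s)}g(u(\cdot,s))\,ds ,
\]
and therefore so is $F$; the interior parabolic (Schauder) estimates of Friedman \cite{Friedman} then give $|u_{xx}|,|u_t|\le M$ for $t>\tau+2$. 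Differentiating \eqref{eq5.2} once more in $x$ (so that $w:=u_x$ solves $w_t=d\,w_{xx}-w+\alpha v_x$) and in $t$ (so that $u_t$ solves $(u_t)_t=d\,(u_t)_{xx}-u_t+\alpha v_t$), and repeating the same two estimates, produces $|u_{xxx}|,|u_{xt}|,|u_{xxt}|,|u_{tt}|\le M$ after finitely many half–unit increments of the starting time; restarting the clock absorbs them and yields the stated range $t>\tau+1$. The $v$–ODE then closes the argument: $v_{tt}=-\beta v_t+g'(u)u_t$ and $v_{xt}=-\beta v_x+g'(u)u_x$ give $|v_{tt}|,|v_{xt}|\le M$, while solving the linear ODE
\[
v_x(x,t)=e^{-\beta(t-t_0)}v_x(x,t_0)+\int_{t_0}^t e^{-\beta(t-s)}g'(u(x,s))\,u_x(x,s)\,ds ,\qquad t_0:=\tau+1 ,
\]
bounds $v_x$ by $\beta^{-1}\max_{[0,K_1]}|g'|\cdot M$ plus the boundary contribution inherited from the datum.

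The step I expect to be the real obstacle is dealing with the partial degeneracy: because the $v$–equation carries no diffusion, $v(\cdot,t)$ never becomes smoother in $x$ than its initial value, so the spatial derivatives of $v$ — and, through $F$, the second– and higher–order spatial derivatives of $u$ — have to be transported from the data rather than generated by smoothing. Making this transfer quantitative, and arranging the successive estimates so that the final $M$ is genuinely uniform in $\tau$ and in $\varphi$ (over the admissible class), is where the care is needed; everything else reduces to the Gaussian–kernel estimates already behind Lemma \ref{lem2.5} together with elementary integration of the $v$–ODE.
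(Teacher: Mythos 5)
The decisive point is that the statement you set out to prove is precisely the claim that this paper quotes from \cite[Theorem 3.3]{wu} in order to retract: immediately after Proposition \ref{Pro5.1} the authors state that the estimates for $v_x$, $v_{tx}$ and $u_{xxx}$ are not valid, and they replace them by the corrected Proposition \ref{Pro5.2}. The obstruction is the one you yourself called ``the real obstacle'' but then argued past. Since the $v$-equation in \eqref{eq5.2} has no diffusion, $v(x,t)=e^{-\beta t}\varphi_2(x)+\int_0^te^{-\beta(t-s)}g(u(x,s))\,ds$; the integral term is indeed smooth in $x$ for $t>0$, but the first term is exactly as rough as $\varphi_2$, which is only assumed to lie in ${\rm BUC}$. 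If $\varphi_2$ is nowhere differentiable, $v_x$ simply does not exist at any positive time, and even for smooth data $v_x(x,t)=e^{-\beta t}\varphi_2'(x)+\cdots$ cannot be bounded by a constant $M$ uniform over all $\varphi\in[{\bf 0},{\bf K}]_X$, as the proposition demands. This breaks your argument at two places: the claim that $v(\cdot,t)$ is uniformly H\"older because $g(u(\cdot,s))$ is (you dropped the term $e^{-\beta(t-\tau)}v(\cdot,\tau)$), on which your Schauder bootstrap for $w=u_x$ (whose forcing is $\alpha v_x$) and hence for $u_{xxx}$ and $v_{xt}$ rests; and the variation-of-constants formula for $v_x$, whose ``boundary contribution inherited from the datum'', $e^{-\beta(t-t_0)}v_x(x,t_0)$, is precisely the quantity that need not exist and in any case is not controllable uniformly in $\varphi$. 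Note also that $d\,u_{xxx}=u_{tx}+u_x-\alpha v_x$, so a bound on $u_{xxx}$ is equivalent to one on $v_x$ and cannot be rescued separately.

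The parts of your plan that avoid $\partial_x v$ are sound and are essentially what survives in Proposition \ref{Pro5.2}: invariance of $[{\bf 0},{\bf K}]$, the ODE bounds $|v_t|,|v_{tt}|\le M$, kernel estimates for $|u_x|,|u_t|,|u_{xt}|,|u_{tt}|$ (using that the forcing $-u+\alpha v$ is bounded and Lipschitz in $t$, rather than H\"older in $x$), and then $|u_{xx}|,|u_{xxt}|\le M$ read off from the equation $d\,u_{xx}=u_t+u-\alpha v$. The estimates involving spatial derivatives of $v$ only return under the additional hypothesis that $\varphi_2$ satisfies a global Lipschitz condition, and then only in the weaker form of difference-quotient bounds for $v$, $v_t$ and $u_{xx}$, which is exactly how the paper corrects the statement you attempted to prove.
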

As mention above, $v(x,t)$ in general is not $C^1$ in $x$ when $v(0, \cdot)\in C(\mathbb{R}; [0;K_2])$. Hence, the estimates for $v_x$, $v_{tx}$ and  $u_{xxx}$ are not valid. Here, we correct this mistake. We shall prove that $v$, $v_{t}$ and  $u_{xx}$ possess a property which is similar to a global Lipschitz condition with respect to $x$. In fact, we have the following result.
\begin{proposition}\label{Pro5.2}
Suppose that $w(x,t)=(u(x,t),v(x,t))$ is a solution of (\ref{eq5.2}) with
initial value $\varphi=(\varphi_1,\varphi_2)\in
C\big(\mathbb{R},[{\bf0},{\bf{K}}]\big)$, then there exists a positive constant $M> 0$, independent of $\varphi$, such that for any  $x\in\mathbb{R}$ and $t>1$,
\[
\big|u_t(x, t)\big| ,{\ }\big|u_{tt}(x, t)\big| ,{\ }\big|u_{tx}(x, t)\big|,{\ }\big|u_x(x, t)\big| \leq M ,
\]
\[
\big|u_{xt}(x, t)\big|,{\ }\big|u_{xx}(x, t)\big| ,{\ }\big|u_{xxt}(x, t)\big|\leq M ,
\]
\[  \big|v_t(x, t)\big|,{\ }\big|v_{tt}(x, t)\big| \leq M .
\]
If, in addition, there exists a constant $L'>0$ such that for any $\eta>0$,
$\sup_{x\in \mathbb{R}}|\varphi_2(x+\eta)-\varphi_2(x)|\leq L'\eta$,
then for any $\eta>0$,
$$ \sup_{x\in \mathbb{R},t\geq 1} \big|v(x+\eta,t)-v(x,t)\big| \leq M'\eta,\ \sup_{x\in \mathbb{R},t\geq 1} \left|v_t(x+\eta,t)-
v_t(x,t) \right|  \leq M'\eta,
$$
and
$$ \sup_{x\in \mathbb{R},t\geq 1} \left|u_{xx}(x+\eta,t)-
u_{xx}(x,t) \right|  \leq M'\eta,
$$
where $M'>0$ is a constant which is independent of $\varphi$ and $\eta$.
\end{proposition}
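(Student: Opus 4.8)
The plan is to establish the two parts of the statement separately: first the uniform boundedness of the listed derivatives, and then, under the extra Lipschitz hypothesis on $\varphi_2$, the three global Lipschitz-in-$x$ estimates.

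For the boundedness part, I would first use that the second equation in \eqref{eq5.2} is an ordinary differential equation in $t$ for each fixed $x$, so that
\[
v(x,t)=\varphi_2(x)e^{-\beta t}+\int_0^t e^{-\beta(t-s)}g(u(x,s))\,ds ,
\]
whence ${\bf 0}\le (u(x,t),v(x,t))\le {\bf K}$ for all $(x,t)$. Then standard interior parabolic estimates (Friedman \cite{Friedman}), applied to the first equation of \eqref{eq5.2} with the uniformly bounded term $-u+\alpha v$ regarded as a forcing, yield a constant $M$, independent of $\varphi$, with $|u_t|,|u_x|,|u_{xt}|\le M$ for $t>1$; the bound $|u_{xx}|\le M$ then follows from the identity $u_{xx}=d^{-1}(u_t+u-\alpha v)$. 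Differentiating the first equation in $t$, the function $u_t$ solves a heat-type equation with the uniformly bounded forcing $-u_t+\alpha v_t$, so the same estimates give $|u_{tt}|,|u_{txx}|\le M$ for $t>1$, and hence $|u_{xxt}|\le M$ via $u_{xxt}=d^{-1}(u_{tt}+u_t-\alpha v_t)$. Finally $v_t=-\beta v+g(u)$ and $v_{tt}=-\beta v_t+g'(u)u_t$ give $|v_t|,|v_{tt}|\le M$, using $g\in C^2$. I note that no bound on $v_x$, $v_{tx}$ or $u_{xxx}$ is asserted, precisely because $v$ inherits its $x$-regularity only from the merely continuous datum $\varphi_2$ through the displayed representation.

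Now assume the extra hypothesis on $\varphi_2$, fix $\eta>0$, and set $w^\eta(x,t):=u(x+\eta,t)-u(x,t)$. Writing the first equation of \eqref{eq5.2} in Duhamel form with the heat semigroup and estimating the $L^1$-norm of the $x$-derivative of the Gaussian kernel (which is of order $\tau^{-1/2}$), I would prove
\[
\|u_x(\cdot,t)\|_{L^\infty}\le C\,(1+t^{-1/2}),\qquad t>0,
\]
with $C$ depending only on $d,\alpha,K_1,K_2$; consequently $|w^\eta(x,t)|\le C(1+t^{-1/2})\eta$ for all $x\in\mathbb{R}$ and $t>0$. Subtracting the representation for $v$ at $x+\eta$ and at $x$, and applying the Lipschitz bound on $\varphi_2$ together with the mean value theorem on $g$,
\[
|v(x+\eta,t)-v(x,t)|\le L'\eta\,e^{-\beta t}+\|g'\|_{L^\infty}\int_0^t e^{-\beta(t-s)}|w^\eta(x,s)|\,ds\le M'\eta ,
\]
where $M'<\infty$ because $\sup_{t>0}\int_0^t e^{-\beta(t-s)}(1+s^{-1/2})\,ds<\infty$; this is the asserted Lipschitz bound for $v$, valid for all $t>0$. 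The remaining two estimates follow algebraically for $t\ge 1$: since $|w^\eta(x,t)|\le\|u_x\|_{L^\infty}\eta\le M\eta$ and $|u_t(x+\eta,t)-u_t(x,t)|\le\|u_{tx}\|_{L^\infty}\eta\le M\eta$, the identities $v_t=-\beta v+g(u)$ and $u_{xx}=d^{-1}(u_t+u-\alpha v)$ give $|v_t(x+\eta,t)-v_t(x,t)|\le(\beta M'+\|g'\|_{L^\infty}M)\eta$ and $|u_{xx}(x+\eta,t)-u_{xx}(x,t)|\le d^{-1}(2M+\alpha M')\eta$; relabelling constants completes the argument.

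The step I expect to be the main obstacle is the $t^{-1/2}$ smoothing estimate for $u_x$ near $t=0$. Because $\varphi_1$ is only continuous, $w^\eta(\cdot,t)$ need not be of order $\eta$ as $t\to 0^+$, and the crude bound $|w^\eta|\le 2K_1$ would make the early-time part of the convolution integral defining $v$ only $O(1)$ rather than $O(\eta)$; it is exactly the integrability of the blow-up rate $t^{-1/2}$ of $\|u_x(\cdot,t)\|_{L^\infty}$ that rescues the estimate. Everything else is bookkeeping with the representation and comparison formulas already available.
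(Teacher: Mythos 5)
The paper states Proposition \ref{Pro5.2} without giving a proof (it is offered as a correction to \cite{wu}, with details deferred), so your argument can only be judged on its own terms. Your treatment of the delicate second half is correct and captures the essential mechanism: the variation-of-constants formula $v(x,t)=\varphi_2(x)e^{-\beta t}+\int_0^t e^{-\beta(t-s)}g(u(x,s))\,ds$, the smoothing bound $\|u_x(\cdot,t)\|_{L^\infty}\le C(1+t^{-1/2})$ with $C$ independent of $\varphi$, obtained from the Duhamel representation of $u$ after absorbing the $-u$ term into the factor $e^{-(t-s)}$, and the uniform-in-$t$ integrability of $s^{-1/2}$ against $e^{-\beta(t-s)}$, which is exactly what converts the Lipschitz hypothesis on $\varphi_2$ into $|v(x+\eta,t)-v(x,t)|\le M'\eta$. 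The reduction of the estimates for $v_t$ and $u_{xx}$ to those already obtained, via $v_t=-\beta v+g(u)$ and $u_{xx}=d^{-1}(u_t+u-\alpha v)$, is also fine.

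The gap is in the first half. You assert that interior parabolic estimates with the merely bounded forcing $-u+\alpha v$ yield $|u_t|,|u_{xt}|\le M$, and at the next stage $|u_{tt}|,|u_{txx}|\le M$ with forcing $-u_t+\alpha v_t$. With forcing that is only $L^\infty$, interior regularity gives gradient and H\"older bounds (or $W^{2,1}_p$ bounds), but not pointwise bounds on $u_t$, $u_{xx}$, $u_{xt}$; Schauder estimates would require the forcing to be H\"older in $x$, and that is precisely what fails here, since $v(\cdot,t)$ inherits only continuity in $x$ from $\varphi_2$ --- this is the very reason Proposition \ref{Pro5.1} needed correcting, so the point cannot be waved through. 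The bounds are nevertheless true, and the repair uses the tool you already deploy later: since $v_t=-\beta v+g(u)$ is bounded, $s\mapsto v(\cdot,s)$ is Lipschitz into $L^\infty$ uniformly in $\varphi$; in $u(t)=e^{-(t-\tau)}S_d(t-\tau)u(\tau)+\alpha\int_\tau^t e^{-(t-s)}S_d(t-s)v(s)\,ds$ write $v(s)=v(t)+\bigl(v(s)-v(t)\bigr)$ and use $\|\partial_{xx}S_d(r)\|_{L^\infty\to L^\infty}\le Cr^{-1}$ to conclude $\|u_{xx}(\cdot,t)\|_{L^\infty}\le C$, hence $\|u_t(\cdot,t)\|_{L^\infty}\le C$, for $t\ge\tau+\delta$. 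Then apply the same argument to $p=u_t$, whose forcing $\alpha v_t$ is Lipschitz in time because $v_{tt}=-\beta v_t+g'(u)u_t$ is bounded once $u_t$ is, to get $|u_{tt}|,|u_{tx}|,|u_{xxt}|\le M$. With this substitution (or an explicit appeal to parabolic estimates valid for forcings H\"older in time only), your outline becomes a complete proof of the proposition.
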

It turns out that the results in \cite{wu} hold  for the bistable partially degenerate system (\ref{eq5.2}). More recently, we have extended the results to a class of two component monostable cooperative partially degenerate reaction-diffusion systems.
 However, it seems difficult to establish such results for general partially degenerate reaction-diffusion systems. Thus, an interesting problem is to adress the entire solutions of general partially degenerate reaction-diffusion systems.

\begin{thebibliography}{99}

\bibitem{cm} V. Capasso and L. Maddalena, Convergence to equilibrium
states for a reaction-diffusion system modeling the spatial spread of
a class of bactetial and viral diseases, {\it J. Math. Bio.},
13 (1981), 173--184.

\bibitem{Cheng} X.F. Chen and J.-S. Guo, Existence and uniqueness of entire solutions for a reaction-diffusion
equation, {\it J. Differential Equations,} 212 (2005), 62--84.

\bibitem{Chengn} X.F. Chen, J.-S. Guo and H. Ninomiya, Entire solutions of reaction-diffusion equations with
balanced bistable nonlinearity, {\it Proc. Roy. Soc. Edinburgh Sect. A,} 136 (2006), 1207--1237.

\bibitem{Crooks} E.C.M. Crooks and J.-C. Tsai, Front-like entire solutions for equations with convection, {\it J. Differential Equations,} in press.

\bibitem{Ei1} S.I. Ei, The motion of weakly interacting pulses in reaction-diffusion systems,
{\it J. Dynam. Differential Equations,} 14 (2002), 85--136.

\bibitem{Ei2}  S.I. Ei, M. Mimura and M. Nagayama, Pulse-pulse interaction in reaction-diffusion
systems, {\it Phys. D,} 165 (2002), 176--198.

\bibitem{Fangz} J. Fang and X.Q. Zhao, Monotone wavefronts for partially degenerate reaction-diffusion systems, {\it J. Dynam. Differential Equations,} 21 (2009), 663--680.

\bibitem{Fangz2} J. Fang and X.Q. Zhao,  Existence and uniqueness of traveling waves for non-monotone integral equations with applications, {\it J. Differential Equations,} 248 (2010), 2199--2226.

\bibitem{Fife} P. Fife, Mathematical aspects of reacting and diffusing systems. Lecture Notes
in Biomathematics, 28. Springer-Verlag, Berlin-New York, 1979.

\bibitem{Friedman} A. Friedman,  Partial differential equations of parabolic type. Prentice-Hall, Englewood Cliffs,
N.J., 1964.

\bibitem{Fukao} Y. Fukao, Y. Morita and H. Ninomiya, Some entire solutions of Allen-Cahn equation, {\it Taiwanese
J. Math.,} 8 (2004), 15--32.


\bibitem{Guom} J.-S. Guo and Y. Morita, Entire solutions of reaction-diffusion equations and an application
to discrete diffusive equations, {\it Discrete Contin. Dyn. Syst.,} 12 (2005), 193--212.

\bibitem{Gt} J.-S. Guo and J.-C. Tsai, The asymptotic behavior of solutions of the
buffered bistable system,  {\it J. Math. Biol.,} 53 (2006),
179--213.

\bibitem{Guowu} J.-S. Guo and C.H. Wu, Entire solutions for a two-component competition system in a lattice, {\it Tohoku
Math. J., } 62 (2010), 17--28.


\bibitem{Hameln1} F. Hamel and N. Nadirashvili, Entire solutions of the KPP equation, {\it Comm. Pure Appl. Math.,} 52
(1999), 1255--1276.

\bibitem{Hameln2} F. Hamel and N. Nadirashvili, Travelling fronts and entire solutions of the Fisher-KPP equation in $\mathbb{R}^N$,
{\it Arch. Ration. Mech. Anal.,} 157 (2001), 91--163.

\bibitem{Hsuzhao}  S. Hsu and X.Q. Zhao, Spreading speeds and traveling waves for nonmonotone
integrodifference equations, {\it SIAM J. Math. Anal.,} 40 (2008), 776--789.


\bibitem{Kawahara} T. Kawahara and M. Tanaka, Interactions of traveling fronts: An exact solution of a
nonlinear diffusion equation, {\it Phys. Lett. A, } 97 (8) (1983), 311--314.

\bibitem{Ka} B. Kazmierczak and V. Volpert, Calcium waves in systems with immobile buffers as a limit of waves for
systems with nonzero diffusion, {\it Nonlinearity,} 21 (2008), 71--96.

\bibitem{Weinberger} H.F. Weinberger, K. Kawasaki and N. Shigesada, Spreading speeds for a
partially cooperative 2-species reaction-diffusion model, {\it Discrete Contin. Dyn. Syst.,} 23(2009), 1087--1098.


\bibitem{Li} B. Li,  H.F. Weinberger and M.A. Lewis, Spreading speeds as slowest wave speeds for cooperative systems.
 {\it Math. Biosci.} 196 (2005), 82--98.

\bibitem{Lilw}  B. Li, M.A. Lewis and H.F. Weinberger, Existence of traveling waves for integral
recursions with nonmonotone growth functions, {\it  J. Math. Biol.,}  58 (2009), 323--338.

\bibitem{Liwangwu} W.T. Li, Z.C. Wang and J.  Wu, Entire solutions in monostable reaction-diffusion equations
with delayed nonlinearity, {\it J. Differential Equations,} 245 (2008), 102--129.

\bibitem{Liliuwang} W.T. Li, N.W. Liu and Z.C. Wang, Entire solutions in reaction-advection-diffusion equations
in cylinders, {\it J. Math. Pures Appl.}, 90 (2008), 492--504.

\bibitem{Ma} S. Ma, Traveling waves for non-local delayed diffusion equations via auxiliary
equation, {\it J. Differential Equations,} 237 (2007), 259--277.


\bibitem{Moritan1} Y. Morita and Y. Mimoto, Collision and collapse of layers in a 1D scalar reaction-diffusion
equation, {\it Phys. D, } 140 (2000), 151--170.

\bibitem{Moritan} Y. Morita and H. Ninomiya, Entire solution with merging fronts to reaction-diffusion equations,
{\it J. Dynam. Differential Equations,} 18 (2006), 841--861.

\bibitem{Moritat} Y. Morita and K. Tachibana, An entire solution to the Lotka-Volterra competition-diffusion
equations, {\it SIAM J. Math. Anal.,} 40 (2009), 2217--2240.

\bibitem{Smith} H.L. Smith, Monotone Dynamical Systems: An Introduction to the Theory of Competitive and Cooperative Systems,
Math. Surveys Monogr., vol. 41, Amer. Math. Soc., Providence, RI, 1995.

\bibitem{Thieme} H.R. Thieme, Asymptotic estimates of the solutions of nonlinear integral equations and asymptotic
speeds for the spread of populations, {\it J. Reine Angew. Math.,} 306 (1979), 94--121.

\bibitem{Tsai} J.-C. Tsai, Global exponential stability of traveling waves in monotone bistable systems, {\it Discrete Contin.
Dyn. Syst.,} 21 (2008), 601--623.

\bibitem{Tsai1}  J.-C. Tsai and J. Sneyd, Existence and stability of traveling waves in
buffered systems, {\it SIAM J. Appl. Math.}, 66 (2005), 237--265.

\bibitem{Tsai2}  J.-C. Tsai and J. Sneyd, Are buffers boring? Uniqueness and asymptotical
stability of traveling wave fronts in the buffered bistable system, {\it  J. Math. Biol.,} 54 (2007), 513--553.

 \bibitem{Volpert}  A.I. Volpert,  V.A. Volpert and  V.A. Volpert, Traveling Wave Solutions of Parabolic Systems, Translation
of Mathematical Monographs, vol. 140. American Mathematical Society, Providence, RI (1994).

\bibitem{Wang1} H. Wang, On the existence of traveling waves for delayed reaction-diffusion
equations, {\it J. Differential Equations,} 247 (2009), 887--905.

\bibitem{Wang} H. Wang, Spreading speeds and traveling waves for non-cooperative reaction-diffusion systems, {\it J. Nonlinear Sci.,} 21 (2010), 747--783.

\bibitem{Wangl} M.X. Wang and G.Y. Lv, Entire solutions of a diffusive and competitive Lotka-Volterra type system with nonlocal delay, {\it Nonlinearity}, 23 (2010), 1609--1630.



\bibitem{Wangliruan} Z.C. Wang, W.T. Li and S. Ruan, Entire solutions in bistable reaction-diffusion equations
with nonlocal delayed nonlinearity, {\it Trans. Amer. Math. Soc.,} 361 (2009), 2047--2084.

\bibitem{Wang7} Z.C. Wang, W.T. Li and S. Ruan, Entire solutions in delayed lattice differential equations with monostable nonlinearity,
 {\it SIAM J. Math. Anal.,} 40 (2009), 2392--2420.

 \bibitem{Wangli} Z.C. Wang and W.T. Li, Dynamics of a nonlocal delayed reaction-diffusion equation
without quasi-monotonicity, {\it  Proc. Roy. Soc. Edinburgh Sect. A,} 140 (2010), 1081--1109.

\bibitem{Weinberger1} H.F. Weinberger, M.A. Lewis and B. Li, Analysis of linear determinacy for
spread in cooperative models, {\it  J. Math. Biol.,} 45 (2002), 183--218.

\bibitem{Weinberger2} H.F. Weinberger, M.A. Lewis and B. Li, Anomalous spreading speeds of
cooperative recursion systems, {\it  J. Math. Biol.,}  55 (2007), 207--222.



\bibitem{wuliu1} S.L. Wu and S.Y. Liu, Uniqueness of non-monotone traveling waves for delayed
reaction-diffusion equations, {\it Appl. Math. Letter,} 22 (2009), 1056--1061.

\bibitem{wuliu}  S.L. Wu and S.Y. Liu, Existence and uniqueness of traveling waves for non-monotone integral equations with application,
{\it J. Math. Anal. Appl.,} 365 (2010), 729--741.




\bibitem{wuliu2} S.L. Wu, H.Q. Zhao and S.Y. Liu,  Asymptotic stability of traveling waves for delayed reaction-diffusion equations with crossing-monostability,
{\it  Z. angew. Math. Phys., } 62 (2011), 377--397.

\bibitem{wu} S.L. Wu, Entire solutions in a bistable reaction-diffusion system modeling
man-environment-man epidemics, {\it Nonlinear Anal. RWA},  13 (2012), 1991--2005.

\bibitem{wusl} S.L. Wu, Y.J. Sun and S.Y. Liu, Traveling fronts and entire solutions in partially degenerate reaction-diffusion systems with monostable nonlinearity, submitted.

\bibitem{wuweng}   S.L. Wu and P.X. Weng, Entire solutions for a multi-type SIS nonlocal epidemic model in $\mathbb{R}$ or $\mathbb{Z}$, {\it J. Math. Anal. Appl.,} 394 (2012), 603--615.


\bibitem{xiaor} D.M. Xiao and S. Ruan, Global analysis of an epidemic
model with nonmonotone incidence rate, {\it Math. Biol.,} 208 (2007),
419--429.

\bibitem{xz} D. Xu and X.Q. Zhao, Erratum to ``Bistable waves in an epidemic
model'', {\it J. Dynam. Differential Equations,} 17 (2005), 219--247.


\bibitem{Yagisita} H. Yagisita, Back and global solutions characterizing annihilation dynamics of traveling fronts,
{\it Publ. Res.Inst. Math. Sci.,} 39 (2003), 117--164.



\bibitem{Zhao} X.Q. Zhao, Dynamical Systems in Population Biology, Springer, New York, 2003.


\bibitem{Zhaowang} X.Q. Zhao and W. Wang, Fisher waves in an epidemic model, {\it Discrete Contin. Dyn. Syst.
B} 4 (2004), 1117--1128.

\end{thebibliography}
\end{document}